\newtheorem{theorem}{Theorem}[section]
\newtheorem{corollary}[theorem]{Corollary}
\newtheorem{lemma}[theorem]{Lemma}
\newtheorem{proposition}[theorem]{Proposition}
\newtheorem{conjecture}[theorem]{Conjecture}
\newcommand{\Prob} {{\mathbb P}}
\newcommand{\Z}{{\mathbb Z}} 
\newcommand{\E}{{\mathbb E}}
\newcommand{\R}{{\mathbb{R}}}
\newcommand{\C}{{\mathbb C}}
\newcommand{\dist}{{\rm dist}}
\newcommand{\x}{{\bf x}}
\newcommand{\y}{{\bf y}}
\newtheorem*{rep@theorem}{\rep@title}
\newcommand{\newreptheorem}[2]{%
	\newenvironment{rep#1}[1]{%
		\def\rep@title{#2 \ref{##1}}%
		\begin{rep@theorem}}%
		{\end{rep@theorem}}}
\def \Im {{\rm Im}}
\def \p {\partial}
\def \Half {{\mathbb H}}
\def \Disk {{\mathbb D}}
\def \eset {\emptyset}
\def \C {{\mathbb C}}
\def \nat {{\mathbb N}}
\def \BMtrans {{\tilde p}}
\def\bx {{\mathbf x}}
\def \by {{\mathbf y}}
\def \const {{q}}
\def \saws {{\cal R}}
\def \paths {{\cal K}}
\def \linehere { {\hrule}}
\def \labove { \mtwo \linehere \linehere \linehere \ms   }
\def \lbelow {{\ms \linehere \linehere \linehere \mtwo}}
\def \mtwo {{\medskip \medskip}}
\def \ms {{\medskip}}
\newcommand {{\whoknows}} {{\mathcal C}}
\newenvironment{remark}[1][Remark]{\begin{trivlist}
\item[\hskip \labelsep {\bfseries #1}]}{\end{trivlist}}
\newenvironment{definition}[1][Definition]{\begin{trivlist}
\item[\hskip \labelsep {\bfseries #1}]}{\end{trivlist}}
\newcommand \hcap  {{\rm hcap}}
\newcommand \slepart  {{\Psi}}
\newcommand \stoptime {\tau}
\newenvironment{advanced}
{ \labove \begin{quote} \begin{small}}
{ \end{small}\end{quote} \lbelow }
\def \begad{\begin{advanced}}
\def \endad{\end{advanced}}
\def \paths{{\mathcal K}}
\def \saws {{\mathcal W}}
\def \z {{\bf z}}
\def \bgamma {\boldsymbol \gamma}
\def \btheta
\def  \bz  {{\bf z}}
\newcommand{{\pe}}  {\partial_e}
\newcommand {{\lodd}} {{\mathcal J}}
\newcommand{{\inrad}} {{\rm inrad}}
\newcommand {{\cent}} {{\bf c}}
\newcommand {{\eb}}  {{\bf e}}
\newcommand{{\saps}}  {{\mathcal X}}
\newcommand {{\dyadic}}  {{\mathcal Q}}
\newcommand {{\curves}} {\paths}
\newcommand {{\ball}} {{\mathcal B}}
\newcommand {{\measures}} {{\mathcal M}}
\newcommand{{\brown}}  {{\nu}}
\newcommand{{\osc}}  {{\rm osc}}
\newcommand  {{\rbrown}} {{\mu}}
\newcommand{{\lcurves}}{{\curves_L}}
\newcommand{{\bcurves} } {\curves^{\rm bub}}
\newcommand {{\bbrown}}{{\nu^{\rm bub}}}
\newcommand {{\n}} {{\bf n}}
\newcommand \bt {{\bf t}}
\newcommand  \bfeta {{\bm \eta}}
\newcommand \Ecal  {{\mathcal E}}
\newcommand {{\loopterm}}  {{\mathcal L}}
\newcommand {\pastloopterm} {\hat {\mathcal L}}
\newcommand {\pastloops}[2] { \hat {L}^{#1}_{#2}}
\newcommand {\loops}[2]{L^{#1}_{#2}}
\newcommand {\sumcsc} {\psi}
\newcommand {\indicator}{I}
\newcommand {\pastindicator}{\hat I}
\newcommand {\Integral}[1] {\mathcal I_{#1}}
\newcommand  \X  {{\bf X}}
\newcommand \B  {{\bf B}}
\newcommand\bdry {{\partial}}
\def \Ecal   {{\mathcal E}}
\def \multE {{\Ecal}}
\def \multProb {{\mathcal P}}
\def \BMProb {{P}}
\newcommand{\tildeN}[2]{\tilde N_{#1, #2}}
\newcommand{\abs}[1]{\left\vert #1 \right\vert}
\newcommand{\nin}{\not\in}
\newcommand{\drjradial}[2]{\tilde \theta^{#1}_{#2}}
\newcommand{\drjcommon}[2]{\xi ^{#1}_{#2}}
\begin{document}
	\title{N-sided Radial Schramm-Loewner Evolution}
	\author{Vivian Olsiewski Healey\footnote{research supported  in part by NSF DMS-1246999.} \,and Gregory F. Lawler\footnote{research suported  by NSF DMS-1513036}}
	\date{\today}
	\maketitle 
	
\abstract{
We use the interpretation of the Schramm-Loewner evolution
as a limit of path measures tilted by a loop term in order
to motivate the definition of $n$-radial SLE going to
a particular point.  In order to justify the
definition we prove that the measure obtained by an
appropriately normalized loop term on $n$-tuples
of paths has a limit.  The limit  measure can be 
described as $n$ paths moving by the Loewner equation
with a driving term of Dyson Brownian motion.  While the
limit process has been considered before, this paper shows why it naturally arises as a limit of configurational measures obtained
from loop measures.}

\section{Introduction}

Multiple Schramm-Loewner evolution has been
studied by a number of authors   including \cite{KL}, \cite{Dubedat}, \cite{Mohammad}, \cite{PWGlobal_Local_SLE}, \cite{BPW_GlobalSLE} (chordal) and \cite{Z2SLEbdry}, \cite{Z2SLEint} ($2$-sided radial). For $\kappa\leq 4$, domain $D$, and $n$-tuples $\boldsymbol x$ and $\boldsymbol y$ of boundary points,  multiple chordal $SLE$ from $\boldsymbol x$ to  $\boldsymbol y$ in $D$ is defined as the measure absolutely continous with the $n$-fold product measure of chordal $SLE$ in $D$ with Radon-Nikodym derivative
\begin{equation}\label{chordal_tilt}
Y(\bgamma)= I(\bgamma) \exp\left\{\frac \cent 2 
\sum_{j=2}^n
\, m[K_j(\bgamma)] \right\},
\end{equation}
where $I(\bgamma)$ is the indicator function of
\[
\{ \gamma^j\cap \gamma^k =\emptyset, \, 1\leq j <k \leq n \},
\]
and $m[K_j(\bgamma)]$ is the Brownian loop measure of loops that intersect at least $j$ paths (see, e.g.,  \cite{JL_partition_function} for this result; see \cite{ConfInvLERW} for the construction of Brownian loop measure). 
We would like to define multiple radial $SLE$ by direct analogy with the chordal case, but this is not possible for two reasons. First, in the radial case the event $I(\bgamma)$ would have measure $0$, and second, the Brownian loop measure $m[K_j(\bgamma)]$ would be infinite, since all paths approach $0$. Instead, the method will be to construct a measure on $n$ paths that is absolutely continuous with respect to the product measure on $n$ independent radial $SLE$ curves with Radon-Nikodym derivative analogous to (\ref{chordal_tilt}) but for both $I(\bgamma)$ and $m[K_j(\bgamma)]$ depending only on the truncations of the curves at a large time $T$. Taking $T$ to infinity then gives the definition of multiple radial $SLE$. The precise details of this construction, the effect on the driving functions, and the rate of convergence of the partition function are the main concern of this work.

Schramm-Loewner evolution, originally introduced in \cite{Schramm}, is a distribution on a curve in a domain $D\subset \C$ from a boundary point to either another boundary point (\emph{chordal} $SLE$) or an interior point (\emph{radial} $SLE$). In both the chordal and radial cases, there are various  ways to define $SLE$ measure.   Schramm's original observation
was that any probability measure on curves satisfying conformal
invaraiance and the domain Markov property can be
described  in the upper half plane or the disc using the Loewner
differential equation.  More precisely, after a suitable time
change,  it is the measure on parameterized curves $\gamma$ such that for each $t\in [0, \infty)$, $D=g_t\left(D\setminus \gamma[0,t]\right)$, where $g_t$ solves the Loewner equation:
\[ \text{Chordal: }  \dot g_t(z) =\frac{a}{g_t(z)-B_t} ,\quad g_0(z)=z \]
 \[ \text{Radial: }  \dot g_t(w) = 2a g_t(w) \frac{z_t + g_t(w) }{z_t-g_t(z)} ,\quad g_0(z)=z, \]
 where $a=2/\kappa$, $B_t$ is a standard Brownian motion, and $z_t= e^{2i B_t}$.
However, this dynamical interpretation is somewhat artificial in the
sense that the curves typically arise from limits of models
in equilibrium physics and are not ``created'' dynamically using
this equation.  Indeed, the dynamic interpretation is just a way
of describing conditional distributions given certain amounts
of information.   When studying $SLE$, one goes back and
forth between such dynamical interpretations and configurational
or ``global'' descriptions of the curve.

One aspect of the global pespective is that 
%
%
%
radial $SLE$ measure in different domains may be compared by also considering the partition function $\slepart_D(z,w)$, which assigns a total mass to the set of $SLE$ curves from $z$ to $w$ in the domain $D$. It is defined as the function with normalization $\slepart_\Disk(1,0)=1$ satisfying conformal covariance:
\begin{equation}\label{eq:slepart_conformalcovariance}
\slepart_D(z,w)= \abs{f'(z)}^b \abs{f'(w)}^{\tilde b}\slepart_{D'}(z',w'),
\end{equation}
where $f(D)=D'$, $f(z)=z'$, $f(w)=w'$, and
\[
b=\frac{6-\kappa}{2\kappa}=\frac{3a-1}{2}, \qquad
 \tilde b = b \,\frac{\kappa-2}{4}=b\,\frac{1-a}{2a}
\]
are the boundary and interior scaling exponents. (This definition requires sufficient smoothness of the boundary near $z$.) Another convention defines the partition function with an additional term for the determinant of the Laplacian, however, the benefit of our convention is that value of the partition function is equal to the total mass.

 Considering $SLE$ as a measure with total mass allows for direct comparison between $SLE$ measure in $D$ with $SLE$ measure in a smaller domain $D'\subset D$. This comparison is called either \emph{boundary perturbation} or the \emph{restriction property}, and is stated precisely in Proposition \ref{restriction} \cite{Mohammad}.

Multiple chordal $SLE$ was first considered in \cite{BBK,   Dubedat,KL}. Dub\'edat  \cite{Dubedat} shows that two (or more) $SLE$s commute only if a system of differential equations is satisfied, and the construction holds until the curves intersect. Using this framework, the uniqueness of global multiple $SLE$ is shown in \cite{PK} \cite{PWGlobal_Local_SLE} and \cite{BPW_GlobalSLE}. In these works, the term \emph{local} $SLE$ is used to refer to solutions to the Loewner equation up to a stopping time, while \emph{global} $SLE$ refers to the measure on entire paths.

\begin{figure}
	\begin{center}
		\includegraphics[scale=1.2]{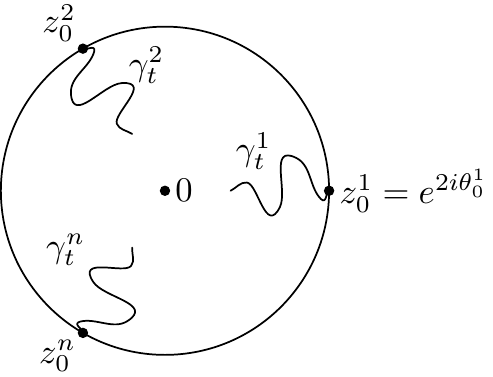}
		\caption{Initial segments of $n$-radial $SLE$}
		\label{fig:unit_circle}
	\end{center}
\end{figure}

This work builds on the approach of \cite{KL}, which relies on the loop interpretation to give a global definition for $0<\kappa\leq 4$.
However, because we have to take limits, we will need to use
both global and dynamical expressions.  The
dynamical description relies  on computations concerning the radial Bessel process (Dyson Brownian motion on the circle)  and go back
to  \cite{Cardy}, and hold in the more general setting of $\kappa<8$.


\begin{figure}
	\begin{center}
		\includegraphics[scale=.22]{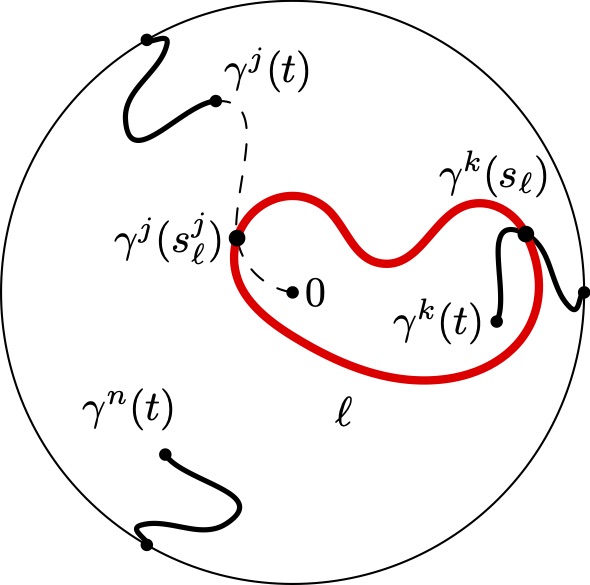} \hspace{3em}
		\includegraphics[scale=.22]{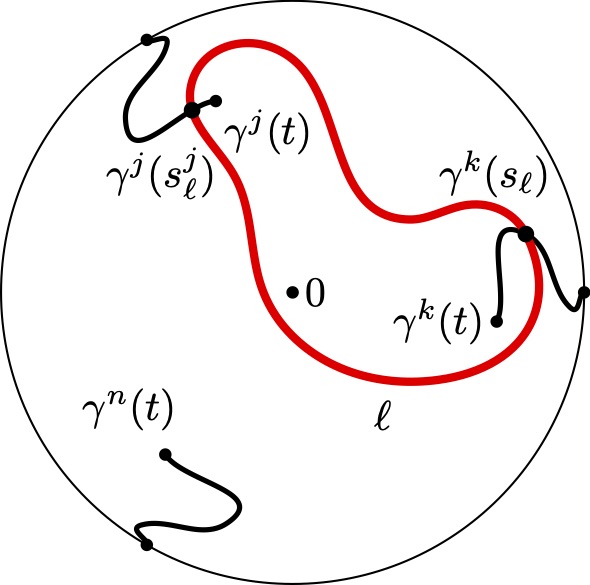}
		\caption{Left: a loop $\ell$ contained in $\loops{j}{t}$. Right: a loop $\ell$ contained in $\pastloops{j}{t}$. The key difference is that loops in $\pastloops{j}{t}$ must intersect $\gamma^j$ before time $t$. In both cases, $\ell$ intersects $\gamma^k_t$ before $\gamma^j$, i.e. $s(\ell)<s^j(\ell)$. }
		\label{fig:loops}
	\end{center}
\end{figure}

Our main result is the following. Let $n$ be a positive integer
and $\bgamma=(\gamma^1, \ldots, \gamma^n)$  an $n$-tuple of curves from $z^j_0 \in \bdry \Disk$ to $0$ with driving functions $z^j_t=e^{2i\theta^j_t}$. We will assume that the curves are parameterized using the $a$-common parameterization, which is defined in \S \ref{sec:locindep}. Let $\Prob$ denote the $n$-fold product measure on independent radial $SLE$ curves from $\gamma^j(0)$ to $0$ in $\Disk$ with this parameterization. (See Figure~\ref{fig:unit_circle}.)
Let $\loops{j}{t} = \loops{j}{t}(\bgamma_t)$ be the  set of loops $\ell$ that hit the curve $\gamma^j$ and at least one initial segment $\gamma^k_t$ for $k=1, \ldots, n$, $k\neq j$ but do not hit $\gamma^j$ first. (See the lefthand side of Figure \ref{fig:loops}.) Here we are measuring the ``time'' on the
curves $\bgamma$ and not on the loops.  Define
\[   \loopterm_t = \indicator_t \, \exp \left\{ \frac{\cent}{2}
\sum_{j=1}^n m_\Disk(\loops{j}{t}) \right\} \] 
where  $I_t$ is the indicator function that
$\gamma^j_t \cap \gamma^k = \eset $
for $j \leq k $, and $m_\Disk$ is the Brownian loop measure.

\begin{reptheorem}{maintheorem} 
	Suppose  $0 < \kappa \leq 4$ and  
 $t>0$.  For each $T>t$, let 
	$\mu_{T}=\mu_{T,t}$ denote the measure
	whose Radon-Nikodym
	derivative with respect to $\Prob$ is
	\[                  \frac{  \loopterm_T}
	{ \E^{\btheta_0}\left[  \loopterm_T\right]}.\]
	Then as $T \rightarrow \infty$, the measure $\mu_{T,t}$,
	viewed as a measure on curves stopped at time $t$, 
	approaches  a probability measure   with respect to the variation distance. 
	
	Moreover, the measures  
	are consistent and give a probability measure 
	on curves $\{\bgamma(t): t \geq 0\}$.   This measure can be decribed as the solution
	to the $n$-point Loewner equation with
 driving functions $z^j_t=e^{2i \theta^j_t}$ satisfying
	\begin{equation}
	d \theta_t^j = 2a \sum_{k \neq j}
	\cot(\theta_t^j -\theta_t^k) \, dt + dW_t^j,
	\end{equation}
	where $W^j_t$ are independent standard Brownian motions.
\end{reptheorem}

A key step in the proof  is Theorem \ref{exponentialrateofconv1} 
which gives exponential convergence of 
a particular partition function for $n$-radial Brownian motion.
This theorem is valid for $0 < \kappa < 8$, but only in the
$\kappa \leq 4$ case can we apply this to our model and give
a corollary that we now describe.
  Let $ \mathcal X  ={ \mathcal X}_n$ denote the 
set  of ordered pairs $\btheta = (\theta^1,\ldots,\theta^n)$ in the torus
$[0,\pi)^n$ for which there are representatives with
$0 \leq \theta^1 < \theta^2 < \ldots < \theta^n < \theta^1 + \pi$.
Denote
 \[ F_a(\btheta) =  \prod_{1\leq j<k\leq n} |\sin(\theta^k - \theta^j)|^a
\]
\[
\Integral{a} =
\int_{\mathcal X} F_a(\btheta) \, d \btheta.
\]
	\[   \beta = \beta(a,n)  = \frac{a(n^2-1)}{4}.\]
	
\begin{repcorollary}{expconvloopversion}
	If $a \geq 1/2$, there exists $u =u(2a,n)> 0$ such that 
	\[ \E^{\btheta_0}\left[ \loopterm_t \right] =  e^{-2an\beta t} \, \frac{\Integral {3a}}{\Integral{4a}} F_a(\btheta)
	[ 1 + O(e^{-ut})].  \]
\end{repcorollary}
%

The paper is organized as follows. Section \ref{sec:discrete} describes the multiple $\lambda$-SAW model, a discrete model which provides motivation and intuition for the perspective we take in the construction of $n$-radial $SLE$. Section \ref{sec:preliminaries} gives an overview of the necessary background for the radial Loewner equation. Section \ref{sec:MultipleSLE} contains the construction of $n$-radial $SLE$ (Theorem \ref{maintheorem}) as well as locally independent $SLE$. The necessary results about the $n$-radial Bessel process are stated here in the context of $\kappa\leq 4$ without proof. Finally, section \ref{sec:DysonBM} contains our results about the $n$-radial Bessel process, including Theorem \ref{exponentialrateofconv1}. These results hold for all $\kappa<8$ and include proofs of the statements that were needed in section \ref{sec:MultipleSLE}.

\section{Preliminaries}\label{sec:preliminaries}

\subsection{Discrete Model}\label{sec:discrete}

Although we will not prove any results about convergence
of a discrete model to the continuous, much of the motivation
for our work comes from a belief that $SLE$
is a scaling limit of the ``$\lambda$-SAW'' described first
in \cite{KL}. In particular, the key insight needed to prove Theorem \ref{maintheorem}, the use of the intermediate process \emph{locally independent} $SLE_\kappa$ as a step between independent $SLE_\kappa$ and $n$-radial $SLE_\kappa$, was originally formulated by considering the partition function of multiple $\lambda$-SAW paths approaching the same point. For this reason, we describe the discrete model in detail here.


The model weights self-avoiding paths using the \emph{random walk loop measure}, so we begin by defining this.
A (rooted) random walk loop in $\Z^2$ is a nearest
neighbor path $\ell = [\ell_0,\ell_1,\ldots,\ell_{2k}]$
with $\ell_0 = \ell_{2k}$.  The loop measure gives measure
$\hat m(\ell) = (2k)^{-1} \, 4^{-2k}$ to each nontrivial loop of length $2k >0$.  If $V \subset A \subset \Z^2$, we let
\[       F_V(A) = \exp \left\{\sum_{\ell \subset A,
   \ell \cap V \neq \eset  } m(\ell)   \right\}, \]
   that is, $\log F_V(A)$ is the measure of loops
   in $A$ that intersect $V$. 

We fix $n$ and some $r_n > 0$ such 
that there exists $n$ infinite self-avoiding paths starting
at the origin that have no intersection after they first
leave the ball of radius $r_n$.
 (For $n \leq  4$, we can choose $r_n = 0$ but for larger
 $n$ we need to choose $r_n$ bigger because one cannot
 have five nonintersecting paths starting at the origin.
 This is a minor discrete detail that we will not worry about.)  If $A \subset \Z^2$ is a finite, simply connected
 set containing the disk of radius $r_n$ about the origin, we let $\saws_A$ denote the set of
 self-avoiding walks $\eta$ starting at $\p A$,
 ending at $0$, and otherwise staying in $A$.
 As a slight abuse of notation, we will write $\eta^1
 \cap \eta^2 = \eset$ if the paths have no intersections
 other then the beginning of the reversed paths up to
 the first exit from the ball of
 radius $r_n$.    (If $n \leq 4$ and $r_n = 0$, this means
 that the paths do not intersect anywhere except their terminal
 point which is the origin.)

 If $\bfeta = (\eta^1,\ldots,\eta^n)$ is an $n$-tuple
 of such paths, we let $I(\bfeta)$ be the indicator function
 of the event that $\eta^j \cap \eta^k = \eset$ for all $j\neq k$.  We write $|\eta^j|$ for the number of edges in $\eta^j$
 and $|\bfeta| = |\eta^1| + \cdots + |\eta^j|$.
  Let $\bar \saws_A = \bar \saws_{A,n} $ denote the set of $n$-tuples $\bfeta$
 in $\saws_A$ with $I(\bfeta) = 1$.  We then consider
 the measure on configurations given by
 \[      \nu_{A,\cent}
 (\bfeta) =  \exp\{- \beta |\bfeta|\}
  \, I(\bfeta)\,  F_\bfeta(A)^{\cent/2} . \]
Here $\beta = \beta_\cent$ is a critical value under which
the measure becomes critical.  If $\z \in (\p A)^n$, we write
$\bar \saws_A(\z)$ for the set of $\bfeta \in \bar \saws_A$
such that $\eta^j$ starts at $z^j$.

Suppose $D$ is a bounded, simply connected domain in
$\C$ containing the origin and let $\z = (z^1,\ldots,z^n)$
be an $n$-tuple of distinct points in $\p D$ oriented
counterclockwise.  For ease, we assume that for
each $j$, $\p D$ in a neighborhood of $z^j$ is a straight
line segment parallel to the coordinate axes (e.g., $D$
could be a rectangle and none of the $z^j$ are corner
points).  For each lattice spacing $N^{-1}$, let
$A_N$ be an approximation of $N D$ in $  \Z^2$ and
let $\z_N = (z^1_N,\ldots,z^n_N)$ be lattice
points corresponding to $N \z$.  We can consider
the limit as $N \rightarrow \infty$ of the measure
on scaled configurations $N^{-1} \, \bfeta$ given by
$\nu_{A_N,\cent}$ restricted to $\bar \saws_{A_N}(\z_N)$.

\begin{conjecture}
Suppose $ \cent \leq 1$.  Then there exist $b,\tilde b_n$
and, critical
$\beta = \beta_\cent$ and a partition function $\slepart^*(D;\z,0)$
such that as $N \rightarrow \infty$,
\[ \nu_{A_N,\cent} (\bar \saws_{A_N}(\z_N))
    \sim    \slepart^*(D;\z,0)  \, N^{nb}
     \, N^{\tilde b_n} . \]
Moreover, the scaling limit $N^{-nb} \, N^{-\tilde b_n}
  \,   \nu_{A_N,\cent} $ is $n$-radial $SLE_\kappa$, $\mu_D(\z,0)$ with
  partition function $\slepart(D;\z,0).$  If $f:D \rightarrow
  f(D)$ is a conformal transformation with $f(0) = 0$, then
  \[    f \circ \mu_D(\z,0) = |f'(\z)|^b \, |f'(0)|^{\tilde{b}_n }\, \mu_{f(D)}(f(\z),0).\]  Here
  $f(\z)  = (f(z^1),\ldots,f(z^n))$ and $ f'(\z)
  = f'(z^1) \cdots f'(z^n)$. 
\end{conjecture}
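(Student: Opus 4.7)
The plan is to decompose $\nu_{A_N,\cent}$ as a product of $n$ independent single-path $\lambda$-SAW measures tilted by a multi-path interaction term, then pass to the scaling limit by combining three ingredients: (a) the conjectural convergence of a single $\lambda$-SAW to radial $SLE_\kappa$ with sharp partition-function asymptotics, (b) convergence of the rooted random-walk loop measure to the Brownian loop measure, and (c) the construction of $n$-radial $SLE$ already carried out in Theorem \ref{maintheorem}. Using inclusion--exclusion on $F_\bfeta(A_N)$, one writes
\[
\nu_{A_N,\cent}(\bfeta) = I(\bfeta)\,\prod_{j=1}^n \Bigl[e^{-\beta_\cent|\eta^j|}\, F_{\eta^j}(A_N)^{\cent/2}\Bigr] \cdot R_N(\bfeta),
\]
where $R_N(\bfeta)$ depends only on the random-walk loop measure of loops hitting at least two of the paths, and the bracketed factor for each $j$ is the unnormalized weight of a single $\lambda$-SAW from $z_N^j$ to $0$.

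First I would use (a) to normalize each bracketed factor by $N^{-b-\tilde b_1}$, so that the resulting $j$-th single-path measure converges weakly to radial $SLE_\kappa$ from $z^j$ to $0$ in $D$ with total mass $\slepart_D(z^j,0)$; here $b$ and $\tilde b_1$ are the boundary and interior exponents from equation \eqref{eq:slepart_conformalcovariance}. By (b), the interaction $R_N(\bfeta)$ restricted to configurations whose paths are truncated before reaching a small neighborhood of $0$ converges to the Brownian-loop interaction appearing in $\loopterm_T$ for the limiting curves, so after uniformizing $D \to \Disk$ with $0 \mapsto 0$ and applying the restriction property (Proposition \ref{restriction}) to push the tilt onto independent radial $SLE$ in $\Disk$, the normalized truncated measure matches, at leading order, the measure $\mu_{T,t}$ of Theorem \ref{maintheorem}. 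Letting $T \to \infty$ via Theorem \ref{maintheorem} then identifies the scaling limit as $n$-radial $SLE_\kappa$; simultaneously, Corollary \ref{expconvloopversion} supplies the Brownian-loop normalization $e^{-2an\beta T}$ and the ratio $\Integral{3a}/\Integral{4a}$ that account for the gap between the naive interior exponent $n\tilde b_1$ and the true $\tilde b_n$, and therefore fix $\slepart^*(D;\z,0)$. The claimed conformal covariance in $\z$ and at $0$ then follows from conformal invariance of the Brownian loop measure combined with the single-path covariance \eqref{eq:slepart_conformalcovariance}.

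The main obstacle is ingredient (a): convergence of a single $\lambda$-SAW to radial $SLE_\kappa$ with quantitative control on its partition function is open for general $\cent \in (0,1]$ and is established only in isolated cases such as $\cent = -2$ (loop-erased random walk to $SLE_2$). Granted the single-path input, the multi-path step is essentially a perturbation of the single-path story controlled by the Brownian loop measure and the radial Bessel computations of Section \ref{sec:DysonBM}; the genuinely new analytic piece is uniform-in-$T$ control of $R_N/\loopterm_T$ as the truncation approaches $0$, which requires refining the random-walk-to-Brownian loop comparison uniformly in neighborhoods of the common target point.
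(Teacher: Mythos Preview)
The statement you are addressing is a \emph{conjecture}, and the paper explicitly does not prove it: immediately after stating it, the authors write ``This conjecture is not precise, and since we are not planning on proving it, we will not make it more precise.'' So there is no proof in the paper to compare your proposal against.

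Your proposal is not a proof but a heuristic program, and you correctly identify this yourself: ingredient (a), the convergence of a single $\lambda$-SAW to radial $SLE_\kappa$ with sharp partition-function asymptotics, is an open problem for general $\cent$ (indeed, for $\cent=0$ it is essentially the scaling limit problem for self-avoiding walk). Since the entire argument is conditional on (a), what you have is an outline of why the conjecture is plausible and how the multi-path statement should reduce to the single-path one, not a proof. This is in fact consonant with how the paper uses the conjecture: as motivation for the definitions and exponents, not as a result. The decomposition you sketch---factoring $\nu_{A_N,\cent}$ into single-path weights times a loop interaction $R_N$, then invoking Theorem~\ref{maintheorem} and Corollary~\ref{expconvloopversion} to control the interaction in the limit---is a reasonable heuristic and matches the spirit of the paper's discussion, but it cannot be upgraded to a proof without resolving (a), and additionally the ``uniform-in-$T$ control of $R_N/\loopterm_T$'' you flag at the end would itself require substantial new work even granted (a).
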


This conjecture is not precise, and since we are not planning
on proving it, we will not make it more precise.  The main goal of this paper is to show that assuming the conjecture informs us as to what $n$-radial $SLE_\kappa$ should be and what the exponents $b, \tilde b_n$ are.

The case $n=1$ is usual radial $SLE_\kappa$ for $\kappa \leq 4$ and the relation is
\[   b = \frac{6-\kappa}{2\kappa}, \;\;\;\;
  \tilde b_1 = \tilde b = b \, \frac{\kappa - 2}{4} , \;\;\;\;
    \cent =  \frac{(3 \kappa-8)(6-\kappa)}{2\kappa}.\]
This is understood rigorously in the case of  $\cent = -2, \kappa = 2$
since the model is equivalent to the loop-erased random walk.
For other cases it is an open problem.  For $\cent = 0$,
  it is essentially equivalent to most of the very
hard open problems about self-avoiding walk.  However, assuming the conjecture and using the fact that the limit should satisfy
the restriction property, one can determine $\kappa = 8/3,
 \cent = 0, b = 5/8, \tilde b = 5/48$.  The critical exponents
 for SAW can be determined (exactly but
 nonrigorously) from these values.

The case $n=2$ is related to {\em two-sided radial $SLE_\kappa$} which can also be viewed as chordal $SLE_\kappa$ from $z^1$ to $z^2$, restricted to paths that go through the
origin.  In this case, $b_2 = d-2$ where $d =   1 + \frac \kappa 8$ is the fractal dimension of the paths.

\subsection{Radial $SLE$ and the restriction
property}  \label{radialsec}

   The radial Schramm-Loewner evolution  with
parameter $\kappa =2/a$ ($SLE_\kappa$) from $ z = e^{2i\theta}$
to the origin in the unit disk is defined as the random
curve $\gamma(t)$ with the following properties.
Let $D_t$ be the component of $\Disk \setminus \gamma[0,t]$
containing the origin. If $g_t: D_t \rightarrow \Disk
$ is the conformal transformation with $g_t(0) = 0,
g_t'(0) > 0$, then $g_t$ satisfies
\[   \dot g_t(w) = 2a \, g_t(z) \, \frac{e^{2iB_t}+ g_t(w)}
    {e^{2iB_t} - g_t(w)} , \;\;\;\; g_0(w) = w, \]
where $B_t$ is a standard Brownian motion.
More precisely, this is the definition of radial
$SLE_\kappa$ when the curve has been parameterized so that $g_t'(0) = e^{2at}$.

We will view $SLE_\kappa$ as a measure on curves modulo reparameterization (there is a natural parameterization that can be given to the curves, but we
will not need this in this paper).   
We extend $SLE_\kappa$ to be a probability measure
$\mu^\#_D(z,w)$  where $D$ is a simply connected
domain, $z \in \p D$ and $w \in D$ by conformal
transformation.  It is also useful for us to consider
the non-probability measure $\mu_D(z,w) =
 \slepart_D(z,w)\, \mu_D^\#(z,w)$. Here
 $\slepart_D(z,w)$ is the radial partition function
 that can be defined by $\slepart_\Disk(1,0) = 1$
 and the scaling rule
\[    \slepart_D(z,w) = |f'(z)|^b \, |f'(w)|^{\tilde b}
 \, \slepart_{f(D)}(f(z),f(w)), \]
 where
 \[   b=  \frac{6 - \kappa}{2\kappa}
 = \frac{3a-1}{2} , \;\;\;\; \tilde b = b \, \frac{\kappa-2}{4} =
     b \, \frac{1-a}{2a} , \]
  are the boundary and interior scaling exponents.
  This definition requires sufficient smoothness of
  the boundary near $z$.  However,  if $D' \subset D$
  agree in neighborhoods of $z$, then the ratio
  \[   \slepart(D,D';z,w):=
      \frac{\slepart_{D'}(z,w)}{\slepart_D(z,w)} \]
   is a conformal invariant and hence is well defined
   even for rough boundary points.

 We will need the restriction property for radial
 $SLE_\kappa, \kappa \leq 4$.  We state here it in a way that does
 not depend on the parameterization, which is the form that we will use.
 
 \begin{definition} 
 If $D$ is a domain and $K_1,K_2$ are disjoint subsets
 of $D$, then $m_D(K_1,K_2)$ is the Brownian loop
 measure of loops that intersect both $K_1$ and $K_2$.
 \end{definition}
 
 \begin{proposition}[Restriction property]
 Suppose $\kappa \leq 4$ and $D = \Disk \setminus K$
 is a simply connected domain containing the origin.
 Let $z \in \p D$ with $\dist(z,K) > 0$, and let $\gamma$
 be a radial $SLE_\kappa$ path from $z$ to $0$
 in $\Disk$.  Let
 \[     M_t = 1\{\gamma_t \cap K = \eset\}  \, \exp\left\{\frac \cent 2
 \, m_\Disk(\gamma_t,K) \right\} \, \slepart_t , \]
 where $\slepart_t = \slepart(\Disk \setminus
 \gamma_t, D \setminus \gamma_t; \gamma(t),0).$
 Then $M_t$ is a uniformly integrable martingale,
 and the probability
 measure obtained from Girsanov's theorem by tilting by $M_t$ is $SLE_\kappa$ from $z$ to $0$ in
 $D$.  In particular,
 \begin{equation} \label{restriction}
 \E\left[1\{\gamma \cap K = \eset\}
  \, \exp\left\{\frac \cent 2
 \, m_\Disk(\gamma,K) \right\} \right] = \E[M_\infty]
  = M_0 = \slepart(\Disk,D; z,0).
  \end{equation}
 \end{proposition}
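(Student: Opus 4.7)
The strategy is to verify that $M_t$ is a local martingale by showing the drift of $d\log M_t$ vanishes, upgrade to uniform integrability via a conformal geometry bound, use Girsanov to identify the tilted measure as $SLE_\kappa$ in $D$, and extract (\ref{restriction}) by optional stopping at $t=\infty$. Let $g_t:\Disk \setminus \gamma_t \to \Disk$ and $\tilde g_t: D \setminus \gamma_t \to \Disk$ denote the standard radial Loewner maps, both fixing $0$ with positive derivative. On $\{I_t = 1\}$ set $\phi_t = \tilde g_t \circ g_t^{-1}$, a conformal bijection from $\Disk \setminus g_t(K)$ onto $\Disk$ fixing $0$. Two applications of the conformal covariance rule (\ref{eq:slepart_conformalcovariance}) for the partition function yield
\[
\slepart_t = |\phi_t'(e^{2i\theta_t})|^b \, |\phi_t'(0)|^{\tilde b}.
\]

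The main step is an Itô computation of $d\log M_t$ along the radial Loewner flow with driving $z_t = e^{2iB_t}$. Differentiating $\phi_t = \tilde g_t \circ g_t^{-1}$ in $t$ gives explicit $dt$-derivatives of $\log|\phi_t'(e^{2i\theta_t})|$ and $\log|\phi_t'(0)|$ in terms of $\phi_t', \phi_t''$ at $e^{2i\theta_t}$ and $\phi_t'(0)$. The key external input is Lawler's Brownian loop-measure formula, which expresses $(d/dt)\,m_\Disk(\gamma_t,K)$ in terms of the Schwarzian derivative $S\phi_t(e^{2i\theta_t})$. Collecting contributions, the drift of $d\log M_t$ becomes a linear combination of $S\phi_t(e^{2i\theta_t})$ and quadratic expressions in $\phi_t'(e^{2i\theta_t})$, with coefficients that are polynomials in $a$, $b$, $\tilde b$, and $\cent$. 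The stated values $b = (3a-1)/2$, $\tilde b = b(1-a)/(2a)$, and $\cent = (3\kappa-8)(6-\kappa)/(2\kappa)$ are precisely what make this combination vanish; this drift cancellation is the main technical obstacle and requires careful bookkeeping of the boundary and interior scaling terms against the loop-measure Schwarzian.

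For uniform integrability I would bound $L_t \slepart_t$ via the conformal geometry of $\phi_t$: since $\gamma(t) \to 0$ and $\dist(0,K) > 0$, the hulls $g_t(K)$ are pushed uniformly toward $\p\Disk$ and away from both $0$ and the moving tip $e^{2i\theta_t}$, so $\phi_t$ stays controlled near those points and $m_\Disk(\gamma_t,K)$ remains bounded along the flow. This produces an $L^\infty$ bound on $M_t$ and hence UI. With the UI martingale in hand, Girsanov applied to the driving Brownian motion $B_t$ shifts the SDE by a drift that, upon direct comparison with the SDE for radial $SLE_\kappa$ in $D$ (obtained by conjugating the $\Disk$-normalized equation by $\phi_t$), identifies the tilted measure as radial $SLE_\kappa$ from $z$ to $0$ in $D$. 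Finally, as $t \to \infty$ the sets $g_t(K)$ collapse onto $\p\Disk$, so $\phi_t$ tends to the identity on $\Disk$ and $\slepart_t \to 1$; the martingale identity $M_0 = \E[M_\infty]$ then becomes (\ref{restriction}).
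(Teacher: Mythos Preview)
Your local-martingale computation via the Schwarzian loop-measure derivative and the Girsanov identification of the tilted law are essentially the approach the paper sketches (attributing the full proof to \cite{Mohammad}). The gap is in your uniform-integrability step. You claim $g_t(K)$ is pushed uniformly away from the tip $e^{2i\theta_t}$ because $\gamma(t)\to 0$, but this is false under the original measure $\Prob$: radial $SLE_\kappa$ in $\Disk$ can come arbitrarily close to $K$ at intermediate times (and hits $K$ with positive $\Prob$-probability), and on such excursions $g_t(K)$ crowds the tip while $m_\Disk(\gamma_t,K)\to\infty$. For $8/3<\kappa\le 4$ one has $\cent>0$, so the loop exponential is unbounded; your asserted $L^\infty$ bound is not justified, and you have not controlled the competing rates at which $\exp\{\tfrac{\cent}{2}m_\Disk(\gamma_t,K)\}$ diverges and $\slepart_t$ vanishes.

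The paper reverses the order of your steps. One first tilts by the \emph{local} martingale and identifies the resulting law, up to the first time $\gamma$ meets $K$, with radial $SLE_\kappa$ in $D$; then one invokes the $\kappa\le 4$ facts that this process almost surely avoids $K$ and is continuous at the origin. That forces the tilted measure to be an honest probability measure on the whole time axis, so $\E[M_t]=M_0$ for every $t$, and the convergence $\slepart_t\to 1$ as $\gamma(t)\to 0$ under the tilted law then gives uniform integrability. No pointwise bound on $M_t$ under $\Prob$ is needed.
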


See \cite{Mohammad} for a proof.  It will be useful for us to discuss the
ideas in the proof.  We parameterize the curve as above and we consider $\Psi_t$, the ratio of partition functions at time $t$ of $SLE$ in $D \setminus \gamma_t$ with
$SLE$ in $\Disk \setminus \gamma_t$.  Using the scaling rule and It\^o's formula, one  computes the SDE for
$\slepart_t,$
\[    d\slepart_t = \slepart_t \, \left[A_t \, dt + R_t
 \, dB_t\right].\]
This is not a local martingale, so we find the compensator and let
\[        M_t = \Psi_t \, \exp \left\{-\int_0^t A_s \, ds \right\},\]
which satisfies
\[            dM_t = R_t \, M_t \, dB_t.\]
This is clearly a local martingale.  The following
observations are made in the calculations:
\begin{itemize}
\item   The compensator term is the same as 
$\exp\left\{\frac \cent 2
 \, m_D(\gamma,K) \right\} $.
 \item  If we use Girsanov theorem and tilt by this
 martingale, we get the same distribution on paths as
 $SLE_\kappa$ in $D$.  The latter distribution 
 was defined by conformal invariance. 
 \end{itemize}
All of this is valid for all $\kappa$ up to the first time
$t$ that $\gamma(t) \in K$.  For $\kappa \leq 4$,
we now use the fact that radial $SLE$ in $D$ never
hits $K$ and is continuous at the origin.  This allows
us to conclude that it is a uniformly integrable martingale.  With probability one in the new measure
we have $\gamma \cap K = \eset$ and hence
we can conclude the proposition.

We sketched the proof in order to see what happens
when we allow the set $D$ to shrink with time. In particular, let $D_t = \Disk \setminus K_t$, where $K_t$ grows with time, and let
\[        \slepart_t = \slepart(\Disk \setminus \gamma_t,
 D_t\setminus \gamma_t;\gamma(t),0), \]
 \[   T = \inf\{t: \gamma_t \cap K_t \neq \eset \}.\]
For $t < T$, we can again consider $SLE$ tilted
by $\slepart_t$.  However, since $K_t$ is growing, the loop term is more subtle in this case. Roughly speaking, the relevant loops are those that intersect $K_s$ for some $s$ smaller than their first intersection time with $\gamma_t$.
%
%
More precisely, the local martingale
has the form
\[                  \loopterm_t^{\cent/2} \, \exp\left\{-
\int_0^t A_s \, ds \right\} \, \Psi_t, \]
where
\begin{itemize}
\item $\log \loopterm_t$ is the Brownian measure 
of loops $\ell$ that hit $\gamma_t$ and satisfy the following: if $s_\ell$ is the smallest time with $\gamma(s_\ell)\in \ell$, then $l\cap K_{s_\ell} \ne \eset$.

\item 
$    A_t = \rho'(0)$, for
\[\rho(\epsilon)= \rho_t(\epsilon)  =  \slepart(\Disk \setminus \gamma_t,
  D_{t+\epsilon}\setminus \gamma_t; \gamma(t),0).\]
We assume that $A_t$ is well defined, that is, that $\rho$ is differentiable.

\end{itemize}
When we tilt by $\slepart_t$, the process at time $t$ moves like $SLE$ in $D_t$.  We will only consider this up to time $T$.  

\section{Measures on $n$-tuples of paths}  \label{sec:MultipleSLE}

We will use a similar method to define two measures on
$n$-tuples of paths which can be viewed
as process taking values in    $\overline \Disk^n$. 
We start with $n$ independent radial $SLE$ paths. 
First, we will tilt independent $SLE$ by a loop term to define a process with the property that each of the $n$ paths locally acts like $SLE$ in the disk minus all $n$ initial segments. We will tilt this process again by another loop term and take a limit to give the definition of global multiple radial $SLE$. Splitting up the construction into two distinct tiltings will allow us to analyze the contribution of $t$-measurable loops separately from that of ``future loops."
Furthermore, each of these processes is interesting in its own right, and we show that in each case the driving function satisfies the radial Bessel equation. (See equations (\ref{Bessel-a}) and (\ref{eq:SDE2a}).)

This clarifies which terms cause the multiple paths to avoid each other's past versus the terms that ensure that the paths continue to avoid each other in the future until all curves reach the origin.

\subsection{Notation}

We will set up some basic notation; some of the notation that was used in the single $SLE$ setting above will be repurposed here in the setting of $n$ curves. (See Figure~\ref{fig:unzip}.)

\begin{figure}
	\begin{center}
		\includegraphics{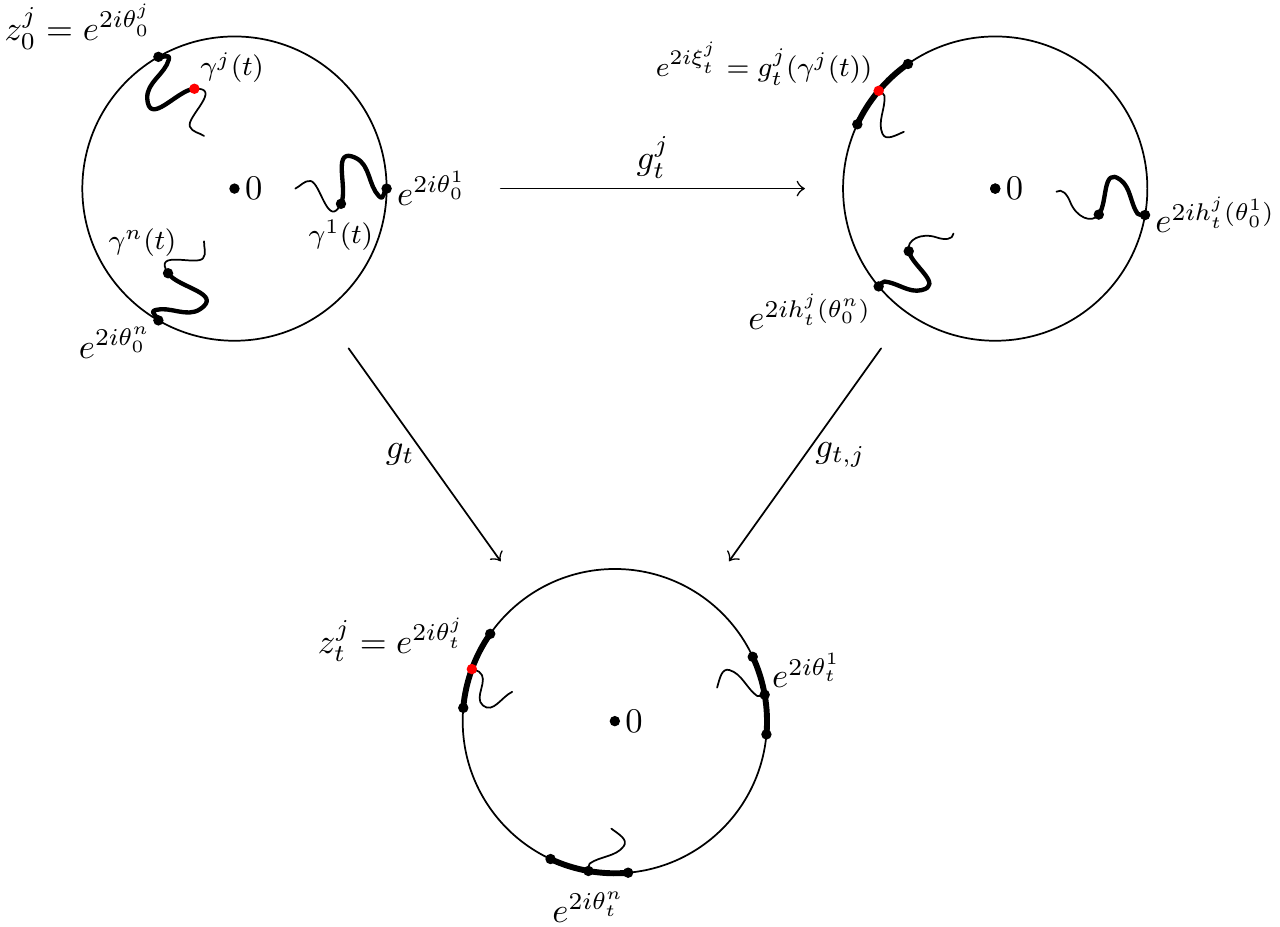}
	\end{center}
	\caption{$n$-radial $SLE$}
	\label{fig:unzip}
\end{figure}

\begin{itemize}

\item  We fix positive integer $n$ and let
$\btheta = (\theta^1,\ldots,\theta^n)$ with
\[     \theta^1
< \cdots < \theta^n  <  \theta^1 +\pi.\]
Let $z^j = \exp\{2i \theta^j$\} and $\bz = (z^1,\ldots,z^n)$.
Note that $z^1,\ldots,z^n$ are $n$ distinct points on
the unit circle ordered counterclockwise.

\item Let $\bgamma = (\gamma^1,\ldots,\gamma^n)$ be an
$n$-tuple of curves 
$\gamma^j:(0,\infty) \rightarrow \Disk\setminus\{0\}$
with $\gamma^j(0+) = z^j $ and 
$\gamma^j(\infty) = 0$.  We write $\gamma^j_t$ for
$\gamma^j[0,t]$ and $\bgamma_t = (\gamma^1_t,\ldots,\gamma^n_t)$.
 In a slight abuse of notation, we will
use $\gamma^j_t$ to refer to both the set $\gamma^j[0,t]$
and the   function
$\gamma^j$ restricted to times in $[0,t]$. 

\item  Let $  D_t^j, D_t$ be the connected components
of $  \Disk \setminus \gamma_t^j, 
   \Disk \setminus \bgamma_t,$  respectively, containing
   the origin.  Let $g_t^j:
   D_t^j \rightarrow \Disk, g_t:D_t \rightarrow \Disk$
be the unique conformal transformations with
\[   g_t^j(0) = g_t(0) = 0 , \;\;\;\;\;
      (g_t^j)'(0), g_t'(0) > 0.\]

\item Let $T$ be the first time $t$ such that $\gamma_t^j
\cap \gamma_t^k \neq \eset $ for some $1 \leq j < k \leq n$.

\item  Define $z_t^j = \exp\{2i\theta^j_t\}$ by $g_t(\gamma^j(t)) = z_t^j$.  Let $\z_t = (z^1_t,\ldots,z^n_t),
\btheta_t = (\theta^1_t,\ldots,\theta^n_t)$.
For $\zeta  \in \Half$ define $h_t(\zeta )$  to be
the continuous function of $t$ with $h_0(\zeta) = \zeta$ and 
\[     g_t(e^{2i\zeta}) = e^{2ih_t(\zeta)}.\]
Note that if $\zeta \in \R$ so that $e^{2i\zeta}  \in \p
\Disk,$ we can differentiate with respect to $\zeta$ to get
\begin{equation}  \label{jan25.3}
           |g_t'(e^{2i\zeta})|
           = h_t'(\zeta).  \end{equation}

\item  More generally, 
if $\bt = (t_1,\ldots,t_n)$ is an
$n$-tuple of times, we define $\bgamma_\bt,
D_\bt, g_\bt$.  We let
$        \alpha(\bt) = \log g_\bt'(0).$

\item We will say that the curves have the \textit{common
(capacity $a$-)parameterization}
if for each $t$,
\begin{equation}  \label{jan25.1}
   \p_{j}\alpha(t,t,\ldots,t) = 2a, \;\;\;\; j=1,\ldots,n.
   \end{equation}
In particular,
\begin{equation}  \label{jan25.2}       g_t'(0) = e^{2ant}. 
 \end{equation}
 Note that \eqref{jan25.1} is a stronger condition than
 \eqref{jan25.2}.

\end{itemize}

The following  form of the Loewner differential
equation is proved in the same was as the $n=1$ case,

\begin{proposition}\label{prop_radial_Loewner_equation}[Radial Loewner equation]
If $\bgamma_t$ has the common  parameterization, then
for $t < T$, the functions $g_t,h_t$ satisfy
\[        \dot g_t(w) = 2a \, g_t(w) \sum_{j=1}^n
              \frac{z_t^j + g_t(w)}{z_t^j - g_t(w)}, \;\;\;\;\;
  \dot h_t(\zeta) = a \sum_{j=1}^n \cot(h_t(\zeta) -
        \theta_t^j).\]
If $\p D_t$ contains an open arc of $\p \Disk$
including  $w = e^{2i\zeta}$, then
\begin{equation}  \label{jan25.4}
   |g_t'(w)| = \exp \left\{-a\int_0^t \sum_{j=1}^n \csc^2 (h_s(\zeta)-\theta^j_t)
 \, ds \right\}.
 \end{equation}
\end{proposition}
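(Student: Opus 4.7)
The plan is to reduce the multi-slit Loewner equation to the classical single-slit radial Loewner equation by exploiting the semigroup structure of the family of maps $\{g_\bt\}$, exactly as in the $n=1$ case. First I would work with a general time vector $\bt = (t_1,\ldots,t_n)$ and compute the partial derivatives $\partial_{t_j} g_\bt(w)$. Fix $j$ and write $\bt^s = (t_1,\ldots,t_j+s,\ldots,t_n)$, and let $\tilde g_s = g_{\bt^s} \circ g_\bt^{-1}$. Because $g_\bt^{-1}$ uniformizes $\Disk$ back to $D_\bt$, the map $\tilde g_s$ is the unique conformal map from $\Disk \setminus g_\bt(\gamma^j([t_j, t_j+s]))$ onto $\Disk$ fixing $0$ with positive derivative there. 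This is a standard single-slit radial Loewner flow, driven by a curve starting at $z_\bt^j := g_\bt(\gamma^j(t_j)) = e^{2i\theta_\bt^j}$, so the classical ($n=1$) radial Loewner equation of \S\ref{radialsec} yields
\[ \partial_s \tilde g_s(z)\big|_{s=0} = (\partial_{t_j}\alpha)(\bt) \, z \, \frac{z_\bt^j + z}{z_\bt^j - z}, \]
where the identification of the local capacity rate as $(\partial_{t_j}\alpha)(\bt)$ uses $\tilde g_s'(0) = g_{\bt^s}'(0)/g_\bt'(0)$ together with $\alpha(\bt) = \log g_\bt'(0)$. Composing with $g_\bt$ then gives the multi-time formula $\partial_{t_j} g_\bt(w) = (\partial_{t_j}\alpha)(\bt)\, g_\bt(w)\,(z_\bt^j+g_\bt(w))/(z_\bt^j-g_\bt(w))$.

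Next I would restrict to the diagonal by the chain rule: under the common parameterization, condition \eqref{jan25.1} gives $(\partial_{t_j}\alpha)(t,\ldots,t) = 2a$ for every $j$, so summing the partial derivatives over $j$ produces the asserted equation for $g_t$. For the equation for $h_t$, I would differentiate $g_t(e^{2i\zeta}) = e^{2ih_t(\zeta)}$ in $t$, substitute the formula for $\dot g_t$, and simplify each summand using the trigonometric identity
\[ \frac{e^{2i\alpha} + e^{2i\beta}}{e^{2i\alpha} - e^{2i\beta}} = i \cot(\beta - \alpha), \]
applied with $\alpha = \theta_t^j$ and $\beta = h_t(\zeta)$, which immediately yields $\dot h_t(\zeta) = a \sum_j \cot(h_t(\zeta) - \theta_t^j)$.

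For the derivative formula \eqref{jan25.4}, I would differentiate the ODE for $h_t(\zeta)$ with respect to $\zeta$ to obtain $\partial_t h_t'(\zeta) = -a\, h_t'(\zeta) \sum_{j=1}^n \csc^2(h_t(\zeta) - \theta_t^j)$. Dividing by $h_t'(\zeta)$ and integrating from $0$ to $t$ (using $h_0'(\zeta) = 1$) gives
\[ \log h_t'(\zeta) = -a \int_0^t \sum_{j=1}^n \csc^2(h_s(\zeta) - \theta_s^j) \, ds, \]
which combined with \eqref{jan25.3} yields the stated identity for $|g_t'(w)|$.

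The one technical point is the identification of $\tilde g_s$ as a legitimate single-slit radial Loewner flow, which requires that $g_\bt$ extend continuously to the prime end at $\gamma^j(t_j)$ and that the image $g_\bt(\gamma^j([t_j, t_j+s]))$ be a simple hull attached to $\p\Disk$ at $z_\bt^j$. This is where the hypothesis $t < T$ enters: pairwise disjointness of the traces guarantees that $\gamma^j(t_j)$ lies on $\p D_\bt$ in a manner accessible to the uniformizing map, so the classical single-slit theory applies. Once this is in place, the rest of the argument is a direct extension of the $n=1$ computation.
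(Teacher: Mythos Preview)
Your proposal is correct and is exactly the approach the paper intends: the paper gives no proof beyond the remark that the proposition ``is proved in the same way as the $n=1$ case,'' and your argument is precisely the standard reduction to the single-slit radial Loewner equation via the semigroup factorization $g_{\bt^s} = \tilde g_s \circ g_\bt$, followed by the chain rule on the diagonal using \eqref{jan25.1}. One small observation: your derivation of \eqref{jan25.4} correctly produces $\theta_s^j$ inside the time integral, whereas the displayed statement in the paper has $\theta_t^j$; the latter is evidently a typo, and your version is the right one.
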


\subsection{Common parameterization and local independence  }\label{sec:locindep}

Suppose that $\gamma^1,\ldots,\gamma^n$ are independent
radial $SLE_\kappa$ paths in $\Disk$ starting at $z^1,\ldots,z^n$,
respectively, going to the origin.  Then we can
parameterize the paths so that they have the common
 parameterization. (This parameterization is only possible until the first time that two of the paths intersect, but this will not present a problem since we will usually restrict to nonintersecting paths.)  Indeed, suppose
 $\tilde \gamma^1,\ldots,\tilde \gamma^n$ are
independent $SLE_\kappa$ paths with the usual parameterization
as in Section \ref{radialsec}. It is not true that
$\tilde \bgamma_t = (\tilde \gamma^1,\ldots,\tilde \gamma^n_t)$ has the common  parameterizaton.  We will
write $\gamma^j(t) = \tilde \gamma(\sigma^j(t))$
where $\sigma^j(t)$ is the necessary time change. Define $g_{t,j}$ by $g_t =g_{t,j} \circ g_t^j$. The driving functions for $\tilde \bgamma_t$ are independent standard Brownian motions; denote these by $\drjradial{j}{t}$. Define $\drjcommon{j}{t}$ by 
$
\drjcommon{j}{t}=\drjradial{j}{\sigma^j(t)}
$
so that
$
e^{2 i \drjcommon{j}{t} }=g^j_t(\gamma^j(t)).
$ 
Furthermore, define
define $h_{t}^j$ and $h_{t, j}$ so that 
\begin{equation}\label{notation:htj}
h_t (w)=h_{t,j}\circ h^j_t(w),
\end{equation}
 and
\[ g_t^j (e^{2iw})= e^{2i h_{t}^j(w)}.\]
(See Figure \ref{fig:unzip}.)


\begin{lemma} \label{time_change_derivative}
	The derivative
	$\dot \sigma^j(t)$ depends only on $\bgamma_t$ and is given by
\begin{equation} \label{Time_change_common_param}
\dot \sigma^j(t) 
=h_{t,j}'(\xi^j_t)^{-2}.
\end{equation}
\end{lemma}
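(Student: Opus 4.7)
The plan is to derive the formula by combining the Loewner equation for the full $n$-curve map $g_t$ with the single-curve Loewner equation for $g^j_t=\tilde g^j_{\sigma^j(t)}$, exploiting the factorization $g_t=g_{t,j}\circ g^j_t$ and the fact that $g_{t,j}$ is analytic at the boundary point $u_0:=e^{2i\xi^j_t}$. Intuitively, $g^j_t$ has already absorbed all of the growth of $\gamma^j$ up to time $t$, so only its rate of capacity-accumulation (i.e.\ $\dot\sigma^j$) remains to be read off from a matching condition between the two Loewner equations at the tip.

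First, since $\discgamma^j$ has the standard radial parameterization, the time-change $g^j_t=\tilde g^j_{\sigma^j(t)}$ satisfies
\[
\dot g^j_t(w)=2a\,\dot\sigma^j(t)\,g^j_t(w)\,\frac{u_0+g^j_t(w)}{u_0-g^j_t(w)}.
\]
Differentiating $g_t=g_{t,j}\circ g^j_t$ in $t$ and using the $n$-point radial Loewner equation for $g_t$ (Proposition~\ref{prop_radial_Loewner_equation}), then substituting $u=g^j_t(w)$, one obtains
\[
\dot g_{t,j}(u)=2a\,g_{t,j}(u)\sum_{k=1}^n\frac{z^k_t+g_{t,j}(u)}{z^k_t-g_{t,j}(u)}-2a\,\dot\sigma^j(t)\,u\,g'_{t,j}(u)\,\frac{u_0+u}{u_0-u}.
\]

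Next, I would observe that $U_t:=g^j_t(D_t)$ coincides with $\Disk$ on an arc of $\p\Disk$ around $u_0$ (because the curves $\gamma^k$, $k\ne j$, stay a positive distance from $\gamma^j(t)$), so by Schwarz reflection $g_{t,j}$ extends analytically across $u_0$ and $\dot g_{t,j}(u)$ is finite as $u\to u_0$. However, both the $k=j$ summand in the first term (since $g_{t,j}(u_0)=z^j_t$) and the second term have simple poles at $u_0$, so their residues must cancel. A Taylor expansion $g_{t,j}(u)=z^j_t+g'_{t,j}(u_0)(u-u_0)+\cdots$ yields residues $-4a(z^j_t)^2/g'_{t,j}(u_0)$ and $4a\,\dot\sigma^j(t)\,u_0^2\,g'_{t,j}(u_0)$; equating them gives
\[
\dot\sigma^j(t)=\frac{(z^j_t)^2}{u_0^2\,g'_{t,j}(u_0)^2}.
\]

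Finally, differentiating $g_{t,j}(e^{2iw})=e^{2ih_{t,j}(w)}$ at $w=\xi^j_t$ gives $g'_{t,j}(u_0)=(z^j_t/u_0)\,h'_{t,j}(\xi^j_t)$, so the formula collapses to $\dot\sigma^j(t)=h'_{t,j}(\xi^j_t)^{-2}$. Dependence only on $\bgamma_t$ follows since $h_{t,j}=h_t\circ(h^j_t)^{-1}$ and $\xi^j_t$ are both determined by $\bgamma_t$. The main technical point will be justifying the singularity-cancellation step rigorously; although intuitive, it rests on the analyticity of $g_{t,j}$ at the boundary point $u_0$, which is where the Schwarz reflection argument enters. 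A useful consistency check is the case $n=1$, for which $g_{t,j}$ is the identity, $h'_{t,j}\equiv 1$, and one recovers $\dot\sigma^1\equiv 1$ as expected.
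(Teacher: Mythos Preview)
Your proof is correct and follows the same strategy as the paper's: differentiate the factorization $g_t=g_{t,j}\circ g^j_t$ (equivalently $h_t=h_{t,j}\circ h^j_t$) in time, insert the known Loewner equations for the full map and the single-curve map, and extract $\dot\sigma^j(t)$ by matching the singular behaviour at the tip as $w\to\gamma^j(t)$. The only difference is cosmetic---you work directly with the disk maps $g$ and phrase the matching as residue cancellation at $u_0$ (justified via Schwarz reflection), while the paper carries out the same computation in the angular coordinates $h$ and then takes the limit; the two arguments are equivalent.
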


\begin{proof}
Differentiating both sides of equation (\ref{notation:htj}), we obtain
\begin{equation}\label{time_deriv_of_h}
\dot h_t(w)= \dot h_{t,j}(h^j_t(w)) +  h_{t,j}' (h^j_t(w)) \times \dot h^j_t(w).
\end{equation} 
Since $g^j_t$ satisfies the (single-slit) radial Loewner equation with an extra term for the time change,
$h^j_t$ satisfies 
\[ 
\dot h^j_t (w) = a \cot\left(h^j_t(w) - \drjcommon{j}{t} \right) \times \dot \sigma^j(t).
\]
On the other hand, $h_{t,j}$ satisfies
\[ \dot h_{t,j} (w) = a \sum_{k\neq j} \cot \left(h_{t,j}(w) - \theta^k_t\right).\]
Substituting these expressions for $\dot h_t(w)$ and $\dot h^j_t (w)$ into (\ref{time_deriv_of_h}) and using the equation for $\dot h_t(w)$ given in Proposition \ref{prop_radial_Loewner_equation} shows that
\begin{equation}
a \sum_{k=1}^n \cot \left( h_t(w)-\theta^k_t\right)
= a \sum_{k\neq j} \cot \left(h_{t}(w) - \theta^k_t\right)
+  h'_{t, j}(h^j_t(w)) \times a \dot \sigma^j(t)\cot\left(h^j_t(w) - \drjcommon{j}{t}\right).
\end{equation}
Solving for $\dot \sigma^j(t)$ and taking the limit as $w\to \gamma^j(t)$ verifies (\ref{Time_change_common_param}).
\end{proof}
 
 The components of $\bgamma$ are not quite independent because the rate of ``exploration'' of
 the path $\gamma^j$ depends on the other paths.
 However, the paths are still independent in the
 sense that the conditional distribution of the
 remainder of the paths given $\bgamma_t$ are
 independent $SLE$ paths; in the case of $\gamma^j$
 it is $SLE$ in $D_t^j $ from $\gamma^j(t)$ to $0$.
 
 We will define another process, which we will call \emph{locally independent} $SLE_\kappa$ that has the
 property that locally each curve grows like $SLE_\kappa$ from $\gamma^j(t)$ to $0$ in $D_t$  (rather than in $D_t^j$).  
 This will
 be done similarly as for a single path.  Intuitively, at time $t$ each curve $\gamma^j_t$ can ``see'' $\bgamma_t$, but not the future evolution of the other curves.
 
 Recall that
 $SLE_\kappa$ in $D\subset \Disk$ is obtained from $SLE_\kappa$ in $\Disk$ by weighting by the appropriate partition function. Since the partition function is not a martingale, this is done by finding an appropriate differentiable compensator so that the product is a martingale, and then applying Girsanov's theorem.
 
 Let
 \begin{equation}  \label{jan27.1}
   \slepart_t^j = \slepart(D_t^j, D_t;\gamma^j(t),0), \;\;\;\;
     \slepart_t = \prod_{j=1}^n \slepart_t^j,
     \end{equation}
 \begin{equation}  \label{oct3.1} \sumcsc(\btheta_t) =
        \sum_{j=1}^n \sum_{k \neq j} \csc^2(\theta^j_t - \theta^k_t),   \end{equation}
  \[
  \stoptime=\inf \{t: \exists j\neq k \text{ such that }\gamma^j_t \cap \gamma^k_t \neq \emptyset\}.
  \]

  For any loop, let 
\[  s^j(\ell) = \inf\{t: \gamma^j(t) \in \ell\},\;\;\;\;\;
   s(\ell) = \min_{j} s^j(\ell). \]
  We make a simple observation that will make the ensuing
  definitions valid.
\begin{lemma}
Let $\gamma^1,\ldots,\gamma^n$ be nonintersecting
curves.
Then except for a set of loops of Brownian loop measure zero, either $s(\ell) =\infty$
or there exists a unique $j$ with $s^j(\ell)
= s(\ell).$
\end{lemma}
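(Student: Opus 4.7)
The plan is to reduce to pairs, observe that two-dimensional Brownian loops almost surely avoid any prescribed point, and then use a disintegration argument to rule out the diagonal event.

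First, only uniqueness requires work: a minimum over finitely many values is always attained, so the existence of $j$ with $s^j(\ell) = s(\ell)$ whenever $s(\ell) < \infty$ is automatic. A union bound over the $\binom{n}{2}$ pairs reduces the claim to showing that for each fixed $j \neq k$, the set
\[ F_{j,k} := \{\ell : s^j(\ell) = s^k(\ell) < \infty\}\]
has $m_\Disk$-measure zero. I would further restrict to loops of diameter $\geq \varepsilon$ contained in $\Disk \setminus B_\delta(0)$, where $m_\Disk$ is finite, and recover the full statement by countable union as $\varepsilon,\delta \downarrow 0$ (using that $\gamma^j$ and $\gamma^k$ are disjoint compact sets on any compact subset of $\Disk \setminus \{0\}$).

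The main input is the classical fact that two-dimensional Brownian motion almost surely avoids any prescribed point. Representing the Brownian loop measure as an integral of 2D Brownian bridge laws, $m_\Disk = \iint \Prob^{z,z}_t \, \tfrac{dA(z)\,dt}{2\pi t^2}$, this yields $m_\Disk(\{\ell : p \in \ell\}) = 0$ for every fixed $p \in \Disk$. Consequently, for each $t_0 > 0$,
\[ m_\Disk(\{\ell : s^j(\ell) = t_0\}) \;\leq\; m_\Disk(\{\ell : \gamma^j(t_0) \in \ell\}) \;=\; 0,\]
so the pushforward of $m_\Disk$ under $s^j$ is non-atomic. I would then disintegrate the restricted loop measure along $\ell \mapsto s^j(\ell)$, writing $m_\Disk = \int \mu_s \, d\pi(s)$ with $\pi$ the pushforward and $\mu_s$ a regular conditional measure on the fiber $\{s^j = s\}$. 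This gives
\[ m_\Disk(F_{j,k}) \;=\; \int_0^\infty \mu_s(\{s^k = s\})\, d\pi(s),\]
and to finish one must show that $\mu_s(\{\gamma^k(s) \in \ell\}) = 0$ for $\pi$-a.e.\ $s$. Heuristically, $\mu_s$ describes a Brownian loop passing through $\gamma^j(s)$ and avoiding $\gamma^j[0,s)$; excising a small disk around $\gamma^j(s)$ and invoking the strong Markov property, the loop decomposes into two 2D Brownian arcs in $\Disk \setminus \gamma^j[0,s)$, each of which almost surely misses the distinct prescribed point $\gamma^k(s)$.

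The main obstacle I anticipate is making this last decomposition rigorous: the regular conditional law of a Brownian loop given that it passes through a prescribed point is well-defined but slightly delicate, since the conditioning event has zero measure. A concrete alternative that avoids disintegration is to partition $[0,T]$ into $N$ subintervals $I_m^N$ of length $T/N$ and directly bound $m_\Disk(\{\ell : s^j(\ell), s^k(\ell) \in I_m^N\})$ via Beurling-projection or planar harmonic-measure estimates, using that such a loop must connect two disjoint curve arcs of vanishing diameter while avoiding the longer initial segments $\gamma^j[0,mT/N)$ and $\gamma^k[0,mT/N)$; summing over $m$ and sending $N \to \infty$ then gives $m_\Disk(F_{j,k} \cap \{s^j \leq T\}) = 0$, and finally $T \to \infty$.
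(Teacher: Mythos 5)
Your reduction to pairs, the restriction to a finite-measure class of loops, and the observation that the pushforward of $m_\Disk$ under $s^j$ is non-atomic are all fine, and you have correctly isolated the key input (planar Brownian paths a.s.\ avoid prescribed points). However, neither of your two routes actually closes. The disintegration route is circular at the decisive step: from $m_\Disk(\{\ell: q \in \ell\})=0$ for each \emph{fixed} $q$ you may conclude $\mu_s(\{q\in\ell\})=0$ for $\pi$-a.e.\ $s$, but the exceptional set of $s$ depends on $q$, and you need the statement for the moving target $q=\gamma^k(s)$; that is exactly the content of the lemma, and the ``strong Markov decomposition of $\mu_s$'' you defer is where all the work lives (and is harder to justify for the conditional measure than for $m_\Disk$ itself). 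The discretization route fails quantitatively: the Beurling bound for reaching within $\delta$ of the tip of a curve without hitting the curve is $O(\delta^{1/2})$, so the per-interval estimate is at best of order $\diam(A^j_m)^{1/2}\diam(A^k_m)^{1/2}$, and by Cauchy--Schwarz the sum over the $N$ intervals is controlled only by $\bigl(\sum_m \diam(A^j_m)\bigr)^{1/2}\bigl(\sum_m\diam(A^k_m)\bigr)^{1/2}$. This does not tend to $0$ as $N\to\infty$ even for rectifiable curves, and it diverges for curves of Hausdorff dimension greater than one -- which is the case for the $SLE$ curves to which the lemma is applied.

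The missing organizing idea, which is how the paper argues, is to decompose the \emph{loop itself} (not a conditional measure) into its excursions between the distinct curves: since the relevant curve pieces are at positive distance, a loop makes only finitely many transitions from one curve to another, and at the start of each such excursion the candidate coincidence point (the point of the target curve carrying the currently minimal curve-time) is measurable with respect to the past of the loop. The strong Markov property then lets you apply ``Brownian motion avoids a prescribed point'' directly to each of the finitely many excursions, with no disintegration and no quantitative estimate needed. I would rework the proof along these lines.
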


\begin{proof}[Sketch of proof]  We consider excursions between
the curves $\gamma^1,\ldots, \gamma^n$, that is,
times $r$ such that $\ell(r) \in \gamma^k$ for
some $k$ and the most recent visit before time
$r$ was to a different curve $\gamma^j$.  There
are only a finite number of such excursions.   For each one,
the probability of hitting a point with the current
smallest index is zero.
\end{proof}

Let $\pastloops{j}{t} = \pastloops{j}{t}(\bgamma_t)$ be the  set
of loops $\ell$ with $s(\ell) < s^j(\ell)
  \leq t$,   and let
\begin{equation} \label{defpastloops}
   \pastloopterm_t = \pastindicator_t \, \exp \left\{ \frac{\cent}{2}
    \sum_{j=1}^n m_\Disk(\pastloops{j}{t}) \right\} .
\end{equation}
(See Figure \ref{fig:loops}.) Here $\hat{I}_t$ is the indicator function that
$\gamma^j_t \cap \gamma_t^k = \eset $
for $j \neq k $.


We note that while the definitions of $s^j$
and $s$ (and hence $\pastloops{j}{t}$) depend on the parameterization
of the curve, $\pastloopterm_t$ depends only on
the traces of the
curves $\gamma^1_t,\ldots,\gamma^n_t$.
For this reason, we could also define $\pastloopterm_\bt$
for an $n$-tuple $\bt = (t_1,\ldots,t_n)$.

\begin{proposition}\label{locally_independent}  
	Let $0 < \kappa \leq 4$.
	If $\bgamma_t$ is independent
$SLE_\kappa$ with the   common parameterization, and
\begin{equation}\label{def:locindepmart}
   M_t = \pastloopterm_t
\, \Psi_t \, \exp\left\{ab\int_0^t \sumcsc(\btheta_s) \,ds
\right\},
\end{equation}
  then $M_t$ is a local martingale for $0 \leq t < \stoptime$.
 If $\Prob_*$ denotes the measure obtained by
 tilting $\Prob$ by $M_t$, then
 \begin{equation}\label{Bessel-a}
     d \theta_t^j = a \sum_{k \neq j}
    \cot(\theta_t^j -\theta_t^k) \, dt + dW_t^j,
 \end{equation}
 where $W_t^1,\ldots,W_t^n$ are independent
 standard Brownian motions with respect to
 $\Prob_*$.  Furthermore,
 \[   \Prob_*\{\stoptime < \infty\} = 0 . \]
\end{proposition}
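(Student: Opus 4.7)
The plan is to apply the time-dependent restriction framework sketched in Section \ref{radialsec} separately to each of the $n$ paths, multiply the resulting single-path local martingales using the independence under $\Prob$, and then match the product with $M_t$. For each fixed $j$, I would reparameterize $\gamma^j$ by its own natural parameter $\tilde s = \sigma^j(t)$ and view it as a single radial $SLE_\kappa$ in $\Disk$ growing in the presence of the time-dependent obstacle $K_{\tilde s} = \bigcup_{k\ne j}\gamma^k_{(\sigma^j)^{-1}(\tilde s)}$. The framework of Section \ref{radialsec} then produces a single-path local martingale
\[
M_t^{(j)} \;=\; \exp\!\bigl\{(\cent/2)\, m_\Disk(\pastloops{j}{t})\bigr\}\cdot \Psi_t^j \cdot \exp\!\Bigl\{-\int_0^t A_s^{(j)}\,ds\Bigr\}.
\]
The identification of the loop set with $\pastloops{j}{t}$ comes from unpacking the condition ``$\ell \cap K_{s_\ell} \ne \eset$'' with $s_\ell = s^j(\ell)$: it says that some $\gamma^k$ ($k \ne j$) hits $\ell$ no later than $\gamma^j$, which by the lemma preceding \eqref{defpastloops} is almost surely equivalent to $s(\ell) < s^j(\ell)$.

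Since the driving Brownian motions $\xi^1,\ldots,\xi^n$ of the individual $\tilde\gamma^j$ are independent under $\Prob$, the product $\prod_j M_t^{(j)}$ is a local martingale for the joint filtration. The loop and partition function pieces combine to $\exp\{(\cent/2)\sum_j m_\Disk(\pastloops{j}{t})\}$ and $\prod_j \Psi_t^j = \Psi_t$ respectively, so matching \eqref{def:locindepmart} reduces to proving the compensator identity $\sum_j A_s^{(j)} = -ab\,\sumcsc(\btheta_s)$. To verify it, I would use the conformal-covariance representation $\Psi_t^j = h_{t,j}'(\xi_t^j)^{b}\, g_{t,j}'(0)^{\tilde b}$ (a consequence of $\Psi_\Disk(\cdot,0) \equiv 1$ together with the factorization $g_t = g_{t,j} \circ g_t^j$) and differentiate in $\epsilon$ the analogous expression in which the other $n-1$ paths are advanced by an infinitesimal common-parameter time $\epsilon$ while $\gamma^j$ is held fixed. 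The time derivatives of $\log h_{t,j}'(\xi_t^j)$ and $\log g_{t,j}'(0)$ are read off from Proposition \ref{prop_radial_Loewner_equation} and \eqref{jan25.4}; each produces $\csc^2(\theta^j_t - \theta^k_t)$ contributions, and summing over $j$ with the symmetry $\csc^2(\theta^j - \theta^k) = \csc^2(\theta^k - \theta^j)$ should assemble the $b$ and $\tilde b$ terms into exactly $-ab\,\sumcsc(\btheta_s)$. Appending $\pastindicator_t$, which is identically $1$ on $[0,\stoptime)$, then yields the local martingale property of $M_t$ on $[0,\stoptime)$.

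For the SDE \eqref{Bessel-a}, I would apply Girsanov with respect to the joint filtration: the diffusion part of $\log M_t$ comes from $b\, d\log h_{t,j}'(\xi_t^j)$, whose $d\xi_t^j$ component induces under $\Prob_*$ a drift of $-b\, h_{t,j}''(\xi_t^j)/h_{t,j}'(\xi_t^j)$ against $\xi^j$. Pushing this through It\^o's formula applied to $\theta_t^j = h_{t,j}(\xi_t^j)$, and using $d\xi_t^j = h_{t,j}'(\xi_t^j)^{-1}\,dW_t^j$ from Lemma \ref{time_change_derivative}, the Girsanov drift exactly cancels the $\tfrac12 h_{t,j}''(\xi_t^j)/h_{t,j}'(\xi_t^j)^{2}$ It\^o correction present under $\Prob$ and leaves \eqref{Bessel-a} with $W^1,\ldots,W^n$ independent standard Brownian motions under $\Prob_*$. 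The non-collision statement $\Prob_*\{\stoptime<\infty\} = 0$ then follows from a standard Bessel comparison: each gap $r_t = \theta^{j+1}_t - \theta^j_t$ acquires drift $\sim 2a/r_t$ near $0$ from the $\cot$ singularities in \eqref{Bessel-a}, so after a constant rescaling it dominates a Bessel process of dimension $2a + 1 \ge 2$ (using $\kappa \le 4$), which a.s.\ avoids the origin.

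\textbf{Main obstacle.} The crux is the compensator identification $\sum_j A_s^{(j)} = -ab\,\sumcsc(\btheta_s)$: tracking both the boundary exponent $b$ and the interior exponent $\tilde b$ through the multi-path radial Loewner flow and confirming that the cross terms assemble into precisely $\sumcsc$ with the coefficient $-ab$ requires care, because $\Psi_t^j$ depends on both $h_{t,j}'(\xi_t^j)$ and $g_{t,j}'(0)$, and the evolution of the latter under common parameterization involves all $n-1$ other driving points simultaneously. A secondary but milder subtlety is handling the time change $\sigma^j(t)$ cleanly when converting the single-path martingales from each path's natural parameter back to the common parameter.
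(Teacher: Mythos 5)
Your proposal follows essentially the same route as the paper: the paper likewise factors $M_t=\prod_j M^j_t$ into per-path restriction-type local martingales, invokes the single-slit boundary-perturbation computation (Proposition 5 of \cite{Mohammad}, i.e.\ the framework of \S\ref{radialsec}) to get $dM^j_t=M^j_t\,\tfrac b2\,\frac{h''_{t,j}(\xi^j_t)}{h'_{t,j}(\xi^j_t)}\,d\xi^j_t$ with the compensator $ab\sum_{k\ne j}\csc^2(\theta^j-\theta^k)$, multiplies using independence of the $\xi^j$, and concludes \eqref{Bessel-a} by Girsanov. Your extra details (the covariance representation $\Psi^j_t=h'_{t,j}(\xi^j_t)^b\,g'_{t,j}(0)^{\tilde b}$ for the compensator, and the Bessel comparison for $\Prob_*\{\stoptime<\infty\}=0$, which the paper handles via Proposition \ref{Mart_a} with $\alpha=a\ge 1/2$) are consistent with the paper's argument.
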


\begin{definition}We call the $n$-tuple of curves $\bgamma_t$ under the measure $\Prob_*$ \emph{locally independent} $SLE_\kappa$.
\end{definition}

The idea of the proof will be to express $M_t$ as a product of martingales
 \[M_t = \prod_{j=1}^n M^j_t\] with the following property: after tilting by the martingale $M_t^j$ the curve $\gamma^j$ locally at time $t$ evolves as $SLE_\kappa$ in the domain $D_t=\Disk\setminus \bgamma_t$. The martingales $M^j_t$ are found by following the method of proof in [Proposition 5, \cite{Mohammad}]. The construction shows that under $\mathbb P_*$, at each time $t$ the curves $\gamma^1, \ldots \gamma^n$ are locally growing as $n$ independent $SLE_\kappa$ curves in $D_t$,  
  which is the reason for the name \emph{locally independent $SLE$}. Locally independent $SLE$ is revisited in \S\ref{sec:locallyindepSLE}.

\begin{proof}[Proof of Proposition \ref{locally_independent}]
Since the $\drjcommon{j}{t}$ are independent standard Brownian motions under the time changes $\sigma^1, \ldots, \sigma^n$, there exist independent standard Brownian motions $B^1_t, \ldots, B^n_t$ such that 
\[d\drjcommon{j}{t}= \sqrt {\dot \sigma^j(t)} \,dB^j_t, \qquad j=1, \ldots, n.
\]
By Lemma \ref{time_change_derivative},
\[
dB^j_t=h_{t,j}' \,(\drjcommon{j}{t})\,d\drjcommon{j}{t} , \quad j=1, \dots, n, 
\]
and It\^o's formula shows that each $\theta^j_t$ satisfies
\[
d \theta^j_t = \dot h_{t,j}(\drjcommon{j}{t}) \, dt + \frac{h_{t,j}''(\drjcommon{j}{t})}{2 \left( h_{t,j}'(\drjcommon{j}{t}) \right)^2} \, dt + dB^j_t. 
\]
Define
\begin{equation}
M^j_t=I^j_t \slepart_t^j  \exp \left\{ \frac{\cent}{2} m_\Disk(\pastloops{j}{t}) \right\}
\exp\left\{  ab \int _0^t \sum_{k\neq j} \csc^2 (\theta^j_s -\theta^k_s) \,ds \right\}, \quad t<T,
\end{equation}
so that
\begin{equation}
M_t=\prod_{j=1}^n M^j_t.
\end{equation}
Applying the method of proof of the boundary perturbation property for single slit radial $SLE$ [Proposition 5, \cite{Mohammad}], we see that $\slepart_t^j$ satisfies
\[
d\slepart^j_t=\slepart^j_t \left[ \left( -\frac{\cent}{2}m_\Disk (\pastloops{j}{t}) - ab \int_0^t \sum_{k:k\neq j} \csc^2 (\theta^j_s-\theta^k_s)ds \right) dt +  \frac{b}{2} \frac{h''_{t,j} (\drjcommon{j}{t})}{h'_{t,j} (\drjcommon{j}{t})} \, d\drjcommon{j}{t}\right],
\]
and $M^j_t$ is a local martingale satisfying
\[
dM^j_t= M^j_t  \frac{b}{2} \frac{h''_{t,j} (\drjcommon{j}{t})}{h'_{t,j} (\drjcommon{j}{t})} \, d\drjcommon{j}{t}, \quad M^j_0=1.
\]
Since the $\drjcommon{j}{t}$ are independent,
$M_t$ satisfies
\[
dM_t= M_t\left[ \sum_{j=1}^n  \frac{b}{2} \frac{h''_{t,j} (\drjcommon{j}{t})}{h'_{t,j} (\drjcommon{j}{t})} \, d\drjcommon{j}{t}  \right].
\]
Therefore, $M_t$ is a local martingale, and equation (\ref{Bessel-a}) follows by the Girsanov theorem. 
\end{proof}

\subsection{Dyson Brownian Motion on the Circle}

The construction of  $n$-radial $SLE_\kappa$ in Section \ref{sec:n-radial} will require some results about the $n$-radial Bessel process (Dyson Brownian motion on the circle), which we state here.  However, the proofs of these results are postponed until Section \ref{sec:DysonBM}, since they hold in the more general setting of $0<\kappa< 8$ and do not rely on Brownian loop measure. 

A note about parameters: we state the results here using parameters $\alpha$ and $b_\alpha$ since the results hold outside of the $SLE$ setting. When we apply these results to $SLE_\kappa$ in the next section, we will set $\alpha=a=2/\kappa$ or $\alpha=2a=4/\kappa$. In particular, when $\alpha=a$,  $b_\alpha=b=(3a-1)/2$.

Define
  \[ F_\alpha (\btheta) =  \prod_{1\leq j<k\leq n} |\sin(\theta^k - \theta^j)|^\alpha
,\;\;\;\; 
\stoptime=\inf \{ t: F_\alpha(\btheta) =0\},
\]
and recall the definition of $\sumcsc(\btheta)$ from \eqref{oct3.1}. The next result will be verified in the discussion following the proof of Lemma \ref{lemma:trig_identity}.

\begin{proposition} \label{Mart_a}
Let $\theta^1, \ldots, \theta^n$ be independent standard Brownian motions, and let $\alpha>0$. If
\begin{equation}\label{eq:defMart_a}
\begin{aligned}
M_{t,\alpha}
 &= F_\alpha( \btheta_t) \exp \left \{ \frac{\alpha^2 n(n^2-1)}{6}t \right \} \, \exp \left\{ \frac{\alpha-\alpha^2}{2} \int_0^t \psi( \btheta_s) ds \right \}, \quad 0\leq t<\tau,
\end{aligned}
\end{equation}
then $M_{t,\alpha}$  is a local martingale for $0 \leq t < \tau$
satisfying
\[
dM_{t,\alpha}=M_{t,\alpha}\,\sum_{j=1}^n \left( \sum_{k\neq j} \alpha \cot(\theta^j_t-\theta^k_t) \right) d\theta^j_t.
\]
If $\Prob_\alpha$ denotes the probability measure obtained after tilting by $M_{t,\alpha}$, then
\begin{equation}   \label{aBessel}
d\theta^j_t=\alpha\sum_{k\neq j}\cot(\theta^j_t-\theta^k_t)\,dt + dW^j_t,\;\;\;\; 0\leq t < \tau, 
\end{equation}
where $W^1_t, \ldots, W^n_t$ are independent standard Brownian motions with respect to $\Prob_\alpha$. Furthermore, if $\alpha \geq 1/2$, 
\[\mathbb P_\alpha (\tau=\infty)=1.\]
\end{proposition}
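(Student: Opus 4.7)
The plan is to apply It\^o's formula to $F_\alpha(\btheta_t)$ under $\Prob$, verify that the two exponential correction factors in \eqref{eq:defMart_a} cancel the drift exactly, and then invoke Girsanov for the SDE \eqref{aBessel}. Writing $\log F_\alpha=\alpha\sum_{j<k}\log|\sin(\theta^k-\theta^j)|$, one computes $\partial_{\theta^j}\log F_\alpha=\alpha\sum_{k\ne j}\cot(\theta^j-\theta^k)$ and $\partial^2_{\theta^j\theta^j}\log F_\alpha=-\alpha\sum_{k\ne j}\csc^2(\theta^j-\theta^k)$; since the $\theta^j$ are independent standard Brownian motions under $\Prob$, It\^o yields
\[ d\log F_\alpha(\btheta_t)=\alpha\sum_j\Big(\sum_{k\ne j}\cot(\theta^j_t-\theta^k_t)\Big)d\theta^j_t-\tfrac{\alpha}{2}\psi(\btheta_t)\,dt, \]
together with $d\langle\log F_\alpha\rangle_t=\alpha^2\sum_j(\sum_{k\ne j}\cot(\theta^j_t-\theta^k_t))^2\,dt$.

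The essential algebraic input is Lemma~\ref{lemma:trig_identity}, namely
\[ \sum_{j=1}^n\Big(\sum_{k\ne j}\cot(\theta^j-\theta^k)\Big)^2=\psi(\btheta)-\tfrac{n(n^2-1)}{3}. \]
Expanding the square, the diagonal terms reduce via $\csc^2-\cot^2=1$ to $\psi-n(n-1)$, and the off-diagonal terms reduce, via the three-term cyclic identity $\cot A\cot B+\cot B\cot C+\cot C\cot A=1$ (valid whenever $A+B+C\equiv 0\pmod\pi$) applied to each unordered triple $\{i,j,k\}$, to $-n(n-1)(n-2)/3$. Combining $dF_\alpha/F_\alpha=d\log F_\alpha+\tfrac12 d\langle\log F_\alpha\rangle$ produces drift $\tfrac{\alpha^2-\alpha}{2}\psi-\tfrac{\alpha^2 n(n^2-1)}{6}$; this is canceled exactly by the two exponential factors in \eqref{eq:defMart_a}, confirming that $M_{t,\alpha}$ satisfies the stated SDE and is a local martingale. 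Girsanov then shifts the drift of each $\theta^j$ under $\Prob_\alpha$ to $\alpha\sum_{k\ne j}\cot(\theta^j_t-\theta^k_t)$, which is \eqref{aBessel}.

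The main obstacle is non-collision for $\alpha\ge 1/2$. For $\alpha>1/2$, I would take $\alpha':=1-2\alpha<0$ and $\lambda:=\tfrac{(1-2\alpha)n(n^2-1)}{6}<0$ and show that $\widetilde M_t:=F_{\alpha'}(\btheta_t)e^{\lambda t}$ is a nonnegative local martingale on $[0,\tau)$ under $\Prob_\alpha$. This repeats the It\^o calculation with $\alpha$ replaced by $\alpha'$, but now carried out under $\Prob_\alpha$ so that the nonzero drift of each $\theta^j$ contributes an additional $\alpha\alpha'\sum_j(\sum_{k\ne j}\cot)^2\,dt$ term; by the same trigonometric identity, the $\psi$-coefficient of the total drift vanishes precisely when $2\alpha+\alpha'-1=0$, and the constant leftover is compensated by $\lambda$. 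Since $\alpha'<0$, $\widetilde M_t\to+\infty$ as $\btheta_t$ approaches any pair-collision, so setting $\tau_R:=\inf\{t:\widetilde M_t\ge R\}$, the stopped process $\widetilde M_{\cdot\wedge\tau_R}$ is a bounded true martingale, and optional sampling gives $\Prob_\alpha(\tau\le t)\le\Prob_\alpha(\tau_R\le t)\le\widetilde M_0/R\to 0$.

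The critical case $\alpha=1/2$ is the most delicate, since $F_{1-2\alpha}\equiv 1$ trivializes the supermartingale trick above. My plan there is to reduce to the classical fact that a Bessel process of dimension $2\alpha+1=2$ does not attain the origin: near a candidate pair-collision, with the remaining particles at positive distance, the relative coordinate $R=\theta^k-\theta^j$ satisfies $dR=2\alpha\cot R\,dt+O(1)\,dt+\sqrt 2\,dB$ for a standard Brownian motion $B$, which after the rescaling $R\mapsto R/\sqrt 2$ matches a Bessel SDE of dimension $2\alpha+1$ up to a bounded perturbation. Feller's test applied to the associated scale function $s(r)=\int^r v^{-2\alpha}\,dv$ yields unattainability of $0$ precisely when $2\alpha\ge 1$, and a standard stochastic comparison argument absorbs the bounded drift from the remaining particles, completing the proof.
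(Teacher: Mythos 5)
Your proof of the local martingale property and of the SDE \eqref{aBessel} is essentially the paper's: the same It\^o computation for $F_\alpha$, the same trigonometric identity (your form $\cot A\cot B+\cot B\cot C+\cot C\cot A=1$ for $A+B+C\equiv 0\pmod\pi$ is equivalent to the paper's \eqref{apr5.1}, and your reduction of the diagonal and off-diagonal sums matches \eqref{jan2.1}--\eqref{jan2.2}), followed by Girsanov. Where you genuinely diverge is the non-collision statement. The paper disposes of $\Prob_\alpha(\tau=\infty)=1$ for all $\alpha\geq 1/2$ by a (sketched) comparison with a one-dimensional Bessel process, reducing to a maximal colliding cluster and dominating the gap $\theta^k-\theta^j$ by a Bessel process with bounded drift perturbation. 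You instead treat $\alpha>1/2$ by exhibiting the exploding nonnegative local martingale $\widetilde M_t=F_{1-2\alpha}(\btheta_t)e^{\lambda t}$ under $\Prob_\alpha$ and applying optional sampling at $\tau_R$; this is correct (indeed $\widetilde M_t=M_{t,1-\alpha}/M_{t,\alpha}$, so it is automatically a $\Prob_\alpha$-local martingale, consistent with the ratio trick the paper itself uses in Proposition \ref{prop:P2a_restated}), and it is arguably cleaner and more self-contained than the comparison argument, since it sidesteps any case analysis on which particles collide. Its cost is that it degenerates exactly at the critical value $\alpha=1/2$, where you fall back on the same Bessel comparison as the paper.

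One point to tighten in the $\alpha=1/2$ case: your phrase ``with the remaining particles at positive distance'' tacitly assumes that only a single pair collides at $\tau$. A priori three or more consecutive particles could meet simultaneously, in which case the $O(1)$ bound on the extra drift fails. The paper's sketch handles this by passing to a maximal colliding block $\{j,\ldots,k\}$ whose outer neighbors stay separated, and noting that the drift exerted on $\theta^k-\theta^j$ by the particles strictly inside the block is nonnegative (it pushes the endpoints apart), so the comparison with the two-particle Bessel process is one-sided and still valid. You should make this reduction explicit; with it, your argument is complete to the same standard as the paper's.
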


%

 \begin{proposition} \label{prop:P2a}
 	Suppose that $\alpha \geq 1/4$ and
	\begin{equation} \label{def:N}
	N_t = N_{t,\alpha,2\alpha} =  F_\alpha(\btheta_t) \,
	\exp \left \{ \frac{  \alpha^2 n(n^2-1)}{2}t \right \} \exp \left\{ -\alpha b_\alpha\int_0^t \psi( \btheta_s) ds \right \} 
	,
	\end{equation}
	where $b_\alpha = (3\alpha-1)/2$. Then $N_t$ is a $\Prob_\alpha$-martingale, and the measure
	obtained by tilting $\Prob_\alpha$ by $N_t$ is $\Prob_{2\alpha}$.
\end{proposition}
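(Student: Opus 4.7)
The plan is to exploit the explicit formulas from Proposition \ref{Mart_a} to recognize $N_t$ as the Radon--Nikodym derivative process $d\Prob_{2\alpha}/d\Prob_\alpha$ on $\mathcal{F}_t$, from which both the martingale property and the identification of the tilted measure follow immediately. This bypasses any direct It\^o calculation of $dN_t$ under $\Prob_\alpha$.

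Step 1 (algebraic identity). I would write $M_{t,\alpha}$ and $M_{t,2\alpha}$ from \eqref{eq:defMart_a} and form the ratio. Using $F_{2\alpha}(\btheta_t) = F_\alpha(\btheta_t)^2$, one factor of $F_\alpha$ remains. The coefficient of $t$ inside the first exponential is $(4\alpha^2 - \alpha^2)n(n^2-1)/6 = \alpha^2 n(n^2-1)/2$, and the coefficient of $\int_0^t \psi(\btheta_s)\,ds$ becomes
\[
\frac{(2\alpha - 4\alpha^2) - (\alpha - \alpha^2)}{2} = \frac{\alpha(1-3\alpha)}{2} = -\alpha b_\alpha,
\]
so that $M_{t,2\alpha} = N_t \cdot M_{t,\alpha}$ identically on $\{t < \tau\}$.

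Step 2 (chain rule and conclusion). Since $\Prob_\alpha$ and $\Prob_{2\alpha}$ are by construction the tilts of $\Prob$ (independent standard Brownian motions) by $M_{t,\alpha}$ and $M_{t,2\alpha}$ respectively, the chain rule for Radon--Nikodym derivatives gives
\[
\left. \frac{d\Prob_{2\alpha}}{d\Prob_\alpha} \right|_{\mathcal{F}_t} = \frac{M_{t,2\alpha}}{M_{t,\alpha}} = N_t.
\]
Any Radon--Nikodym derivative process between two probability measures is automatically a nonnegative martingale under the base measure, so $N_t$ is a $\Prob_\alpha$-martingale, and by construction tilting $\Prob_\alpha$ by $N_t$ reproduces $\Prob_{2\alpha}$.

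Step 3 (role of the hypothesis, and the main subtlety). The assumption $\alpha \geq 1/4$ is precisely $2\alpha \geq 1/2$, which by Proposition \ref{Mart_a} applied with parameter $2\alpha$ guarantees that $\Prob_{2\alpha}$ is a genuine probability measure with $\Prob_{2\alpha}(\tau = \infty) = 1$. This upgrade from local to true martingale is the main issue: when $1/4 \leq \alpha < 1/2$, the measure $\Prob_\alpha$ itself may not charge $\{\tau = \infty\}$ fully, so care is needed when defining the tilt up to the stopping time $\tau$. The point to emphasize is that $F_\alpha(\btheta_\tau) = 0$, hence $N_\tau = 0$, so the tilt annihilates the trajectories that reach the collision set; this is consistent with the fact that under $\Prob_{2\alpha}$ collisions are almost surely avoided, and the chain rule extends to a global identity of probability measures rather than only a local one.
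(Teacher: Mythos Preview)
Your proof is correct and follows essentially the same route as the paper. The paper also computes the factorization $M_{t,2\alpha}=M_{t,\alpha}\,N_t$, observes that successive tilting by $M_{t,\alpha}$ and then $N_t$ is the same as tilting by $M_{t,2\alpha}$, and invokes $2\alpha\geq 1/2$ to pass from a local to a true martingale; the only cosmetic difference is that the paper first records that $N_t$ is a $\Prob_\alpha$-local martingale before upgrading, whereas you phrase the same step directly as a chain rule for Radon--Nikodym derivatives.
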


\begin{proof}
	See Proposition \ref{prop:P2a_restated} and its proof.
\end{proof}

We will also require the following theorem, which is proven immediately after Proposition \ref{prop:P2a_restated}.

\begin{theorem} \label{exponentialrateofconv1}
	If $\alpha \geq 1/2$, there exists $u=u(2\alpha,n) > 0$ such that 
	\[ \E^{\btheta_0}_\alpha\left[ \exp \left\{ -\alpha b_\alpha \int_0^t \psi( \btheta_s) ds \right \} \right] =  e^{-2an\beta t} \, F_\alpha(\btheta_t)  \,\frac{\Integral{3a}}{
		\Integral{4a}}\,
	[ 1 + O(e^{-ut})],  \]
	where
	\begin{equation}\label{eq:defbeta}
	 \beta = \beta(\alpha,n)  = \frac{\alpha(n^2-1)}{4},
	 \end{equation}
	 and $\E_\alpha $ denotes expectation with respect to $\Prob_\alpha$.
\end{theorem}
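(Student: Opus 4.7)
The plan is to apply Proposition \ref{prop:P2a} to rewrite the expectation under $\Prob_\alpha$ as an expectation under $\Prob_{2\alpha}$, then identify the invariant measure of the $2\alpha$-radial Bessel process, and finally invoke exponential ergodicity.

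First, rearranging \eqref{def:N} yields
\[
\exp\!\left\{-\alpha b_\alpha\!\int_0^t\!\psi(\btheta_s)\,ds\right\} = \frac{N_t}{F_\alpha(\btheta_t)}\,\exp\!\left\{-\frac{\alpha^2 n(n^2-1)}{2}\,t\right\}.
\]
Since $N_0 = F_\alpha(\btheta_0)$ and tilting $\Prob_\alpha$ by $N_t/N_0$ produces $\Prob_{2\alpha}$, the change of measure (noting that $\alpha^2 n(n^2-1)/2 = 2\alpha n\beta$) gives
\[
\E_\alpha^{\btheta_0}\!\left[\exp\!\left\{-\alpha b_\alpha\!\int_0^t\!\psi\,ds\right\}\right] = F_\alpha(\btheta_0)\,e^{-2\alpha n\beta t}\,\E_{2\alpha}^{\btheta_0}\!\left[\frac{1}{F_\alpha(\btheta_t)}\right].
\]
The task thus reduces to establishing $\E_{2\alpha}^{\btheta_0}[F_\alpha(\btheta_t)^{-1}] = \Integral{3\alpha}/\Integral{4\alpha} + O(e^{-ut})$.

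Next, I would identify the stationary measure of the $2\alpha$-Bessel process. Its drift $2\alpha\sum_{k\neq j}\cot(\theta^j-\theta^k) = 2\,\partial_{\theta^j}\log F_\alpha$ is a gradient, so the diffusion is reversible with respect to the density proportional to $F_\alpha^{4} = F_{4\alpha}$ on $\mathcal{X}$, normalized by $\Integral{4\alpha}$. The stationary expectation of $F_\alpha^{-1}$ is then
\[
\frac{1}{\Integral{4\alpha}}\!\int_{\mathcal{X}}\!\frac{F_{4\alpha}(\btheta)}{F_\alpha(\btheta)}\,d\btheta = \frac{\Integral{3\alpha}}{\Integral{4\alpha}},
\]
which is the claimed limit.

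The main obstacle is the final step: exponential convergence to equilibrium. For $\alpha \geq 1/2$, Proposition \ref{Mart_a} guarantees that the process almost surely avoids the coalescence set, so $F_\alpha^{-1}$ is well-defined along trajectories. The generator $L = \tfrac12\Delta + 2\alpha\sum_j\sum_{k\neq j}\cot(\theta^j-\theta^k)\,\partial_j$ is symmetric on $L^2(F_{4\alpha}\,d\btheta)$, and conjugating by $F_{2\alpha}$ transforms it into a Schr\"odinger-type operator on the compact torus $\mathcal{X}$ whose diagonal singularities are repulsive when $\alpha\geq 1/2$. This yields a Friedrichs extension with compact resolvent and a spectral gap $u = u(2\alpha,n) > 0$ above the constant ground state, giving an $L^2$-eigenfunction expansion of the semigroup. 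The test function $F_\alpha^{-1}$ lies in $L^2(F_{4\alpha}\,d\btheta)$ because $F_\alpha^{-2}F_{4\alpha} = F_{2\alpha}$ is integrable on $\mathcal{X}$ for $\alpha\geq 1/2$, and the semigroup estimate applied to it furnishes the exponential rate. Alternative routes that bypass the spectral analysis include a pathwise coupling for Dyson-type processes on the circle, or using the explicit eigenfunctions (Jack polynomials for the corresponding Calogero--Sutherland model) that diagonalize the conjugated operator. A secondary technical point is controlling uniformity of the error constant as $\btheta_0$ approaches the coalescence set.
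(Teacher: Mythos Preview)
Your reduction is exactly the paper's: rewrite the expectation via the martingale $N_t$ to pass from $\Prob_\alpha$ to $\Prob_{2\alpha}$, identify the invariant density of the $2\alpha$-process as $f_{4\alpha}$, and compute the stationary mean of $F_\alpha^{-1}$ as $\Integral{3\alpha}/\Integral{4\alpha}$. Where you and the paper diverge is in the exponential rate.

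The paper does not argue via a spectral gap. Instead it proves sharp two-sided pointwise bounds on the time-$1$ transition density of the $2\alpha$-process, $c_1 F_{4\alpha}(\by) \le p_1(\bx,\by) \le c_2 F_{4\alpha}(\by)$ uniformly in $\bx$ (Proposition~\ref{bounds}), via a ``separation lemma'' showing that from any initial configuration the particles spread to distance $2\epsilon$ by time $\epsilon^2$ with probability bounded below independently of $\epsilon$. These bounds feed a standard Doeblin coupling and yield the uniform pointwise rate directly---including uniformity as $\btheta_0$ approaches the coalescence set, which you correctly flag as a concern.

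Your spectral route has a genuine gap as written: an $L^2$ spectral gap gives $\|P_t g - \bar g\|_{L^2(f_{4\alpha})} \le e^{-ut}\|g\|_{L^2}$, not a bound on $|P_t g(\btheta_0) - \bar g|$ uniform in $\btheta_0$. Bridging that requires ultracontractivity (an $L^2\!\to\! L^\infty$ estimate) of the semigroup, which near the singular diagonals is exactly the nontrivial content that the paper's density bounds supply. Your ``pathwise coupling'' alternative is in fact the paper's method, but the coupling only works uniformly once one has those pointwise density bounds; that is where the real work lies, and it occupies all of \S\ref{subsec:exp_rate_of_conv}.
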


\subsection{$n$-Radial $SLE_\kappa$} \label{sec:n-radial}
The remainder of the section is devoted to the construction of \emph{$n$-radial $SLE_\kappa$}, which may also be called \emph{global multiple radial} $SLE_\kappa$. As we have
stated before, we will consider three measures on $n$-tuples of curves with the common parameterization.
\begin{itemize}
\item $\Prob,\E$ will denote independent $SLE_\kappa$ 
with the common parameterization;
\item $\Prob_* , \E_*$ will denote locally independent 
$SLE_\kappa$;
\item $\multProb, \multE$ will denote $n$-radial
$SLE_\kappa$.
\end{itemize}

In Section \ref{sec:locindep}, we obtained $\Prob_*$ from $\Prob$ by tilting by a
$\Prob$-local martingale $M_t$.
We will obtain $\multProb$
from $\Prob_{*}$ by tilting by a $\Prob_*$-local
martingale $N_{t,T}$ and then letting
$T \rightarrow \infty$.  Equivalently, we obtain
$\multProb$ from $\Prob$ by tilting by 
$\tildeN{t}{T} := M_t N_{t,T}$ and letting
$T \rightarrow \infty$.

%
%
%

Let $\loops{j}{t} = \loops{j}{t}(\bgamma_t)$ be the set
of loops $\ell$ with $s(\ell) < s^j(\ell)$ and $s(\ell) \leq t$, as in Figure \ref{fig:loops}.  Define
\[   \loopterm_t = \indicator_t \, \exp \left\{ \frac{\cent}{2}
\sum_{j=1}^n m_\Disk(\loops{j}{t}) \right\} .\] 
Here $I_t$ is the indicator function that
$\gamma^j_t \cap \gamma^k = \eset $
for $j \neq k $. 

Let
\[   
\tildeN{t}{T} =  \E^{\btheta_0}\left[
\loopterm_T \mid \bgamma_t \right],\;\;\;\;
  0 \leq t \leq  T,\]
 where the conditional expectation is with respect
 to $\Prob$.  By construction, $\tildeN{t}{T}$ is a martingale for $0\leq t\leq T$ with $\tildeN{0}{T} = \E^{\btheta_0}[\loopterm_T]$.

For the next proposition, recall that $\pastloopterm_t$ weights by loops that hit at least two curves before time $t$; the precise definition is given in (\ref{defpastloops}). 
 
 \begin{proposition} 
 	Let $T\geq 0$. If $\bgamma_t$ is independent
$SLE_\kappa, 0 < \kappa \leq 4$,  with the  common parameterization,  then
 \begin{equation}  \label{condition}
\tildeN{t}{T} =  \pastloopterm_t \, \slepart_t  \, \E^{\btheta_t}\left[\loopterm_{T-t}\right], \quad 0\leq t \leq T. \end{equation}
 In particular, if
 \[     N_{t,T} =  \exp\left\{-ab\int_0^t \sumcsc(\btheta_s) \,ds
  \right\} \, \E^{\btheta_t}\left[\loopterm_{T-t}\right], \quad 0\leq t \leq T, \]
 then $N_{t,T}$ is a $\Prob_*$-martingale for $0 \leq t \leq  T$,
 and 
   \begin{equation}\label{ELoop}
 \E^{\btheta_t}\left[\loopterm_{T-t}\right]
  =  \E^{\btheta_t}_*\left[  \exp\left\{-ab\int_0^{T-t} \sumcsc(\btheta_s) \,ds
  \right\} \right].
  \end{equation}
  \end{proposition}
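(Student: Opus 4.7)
The plan is to prove the identity (\ref{condition}) first and then extract both the $\Prob_*$-martingale property of $N_{t,T}$ and the identity (\ref{ELoop}) as consequences. To prove (\ref{condition}), I would condition on $\bgamma_t$ and decompose the collection $\loops{j}{T}$ according to where $s(\ell)$ and $s^j(\ell)$ sit relative to $t$:
\[
\loops{j}{T} \;=\; \pastloops{j}{t} \,\sqcup\, Q^j \,\sqcup\, R^j,
\]
where $Q^j = \{\ell : s(\ell) \leq t < s^j(\ell)\}$ and $R^j = \{\ell : t < s(\ell) \leq T,\, s(\ell) < s^j(\ell)\}$. The contribution from $\pastloops{j}{t}$ is $\bgamma_t$-measurable and produces $\pastloopterm_t$; what remains is to show that the conditional expectation of the rest equals $\slepart_t \, \E^{\btheta_t}[\loopterm_{T-t}]$.

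Under $\Prob$, conditionally on $\bgamma_t$ the continuations $\gamma^j|_{[t,\infty)}$ are independent radial $SLE_\kappa$ paths in $D^j_t$ from $\gamma^j(t)$ to $0$. The key step is to apply the restriction property (Proposition~\ref{restriction}) separately to each continuation, with removed set $K^j = \bigcup_{k\neq j} \gamma^k_t$ so that the reduced domain is $D_t$. The combinatorial observation that makes this work is that the loops counted by the restriction tilt for index $j$ --- loops in $D^j_t$ hitting both $\gamma^j|_{[t,\infty)}$ and $K^j$ --- coincide exactly with the loops in $Q^j$: being a loop in $D^j_t$ forces $s^j(\ell) > t$, while hitting $K^j$ forces $s^k(\ell) \leq t$ for some $k\neq j$. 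Consequently, tilting all continuations simultaneously by their restriction martingales contributes the product $\prod_j \slepart^j_t = \slepart_t$ and exactly absorbs $\exp\{(\cent/2)\sum_j m_\Disk(Q^j)\}$. Under the tilted measure the continuations are independent $SLE_\kappa$ curves in $D_t$ from $\gamma^j(t)$ to $0$, and $I_T$ reduces to the condition that the continuations do not hit each other. Pushing forward by $g_t:D_t\to \Disk$ and using conformal invariance of both the Brownian loop measure and of the common parameterization (which is defined intrinsically through the total capacity $\alpha(t,\ldots,t)=2ant$) identifies the remaining expectation with $\E^{\btheta_t}[\loopterm_{T-t}]$: the residual $I_T$ becomes $I_{T-t}$ for the image curves, and the $R^j$ loops map to $\loops{j}{T-t}$ after the time shift.

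Once (\ref{condition}) is established, the factorization $\tildeN{t}{T} = M_t \cdot N_{t,T}$ is immediate from the definition (\ref{def:locindepmart}) of the locally independent martingale $M_t$ together with the definition of $N_{t,T}$. Because $\tildeN{t}{T}$ is a $\Prob$-martingale by construction and $M_t$ is the Radon-Nikodym density $d\Prob_*/d\Prob$, the standard change-of-measure principle yields that $N_{t,T}$ is a $\Prob_*$-martingale on $[0,T]$. For (\ref{ELoop}), apply this $\Prob_*$-martingale property to $N_{\,\cdot\,,T-t}$ started from $\btheta_t$ over the interval $[0,T-t]$, using that $\loopterm_0 = 1$ almost surely (the loops with $s(\ell)=0$ form a Brownian loop measure null set, and $I_0=1$ because for $\kappa\leq 4$ the distinct starting points $z^j$ are not hit by the other curves). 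Evaluating at the terminal time produces (\ref{ELoop}).

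The main obstacle is the restriction step, where one must simultaneously tilt all $n$ continuations and keep precise track of the loop bookkeeping. Specifically, one must verify that the restriction loop terms match $m_\Disk(Q^j)$ exactly (not a larger or smaller collection), that the restriction martingales for different $j$ multiply cleanly thanks to the conditional independence of the continuations, and that $I_T$ splits correctly into the ``continuation $j$ avoids $K^j$'' conditions (automatically enforced by the individual restriction tilts) and a ``continuations avoid each other'' condition (absorbed into $\E^{\btheta_t}[\loopterm_{T-t}]$ after the conformal pushforward). The rest of the argument is essentially bookkeeping and conformal invariance.
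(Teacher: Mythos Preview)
Your approach is essentially the same as the paper's. Your decomposition $\loops{j}{T} = \pastloops{j}{t} \sqcup Q^j \sqcup R^j$ is exactly the paper's factorization $\loopterm_T = \pastloopterm_t \cdot (\loopterm_t/\pastloopterm_t) \cdot \loopterm_{T,t}$ spelled out at the level of loop sets, and both arguments pivot on the same application of the restriction property to each continuation $\gamma^j|_{[t,\infty)}$ in $D_t^j$ with removed set $K^j=\bigcup_{k\neq j}\gamma^k_t$; the deduction of the $\Prob_*$-martingale property from $\tildeN{t}{T}=M_tN_{t,T}$ and of \eqref{ELoop} from $N_{T,T}$ is likewise identical. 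Your write-up is more explicit about the loop bookkeeping (the identification of $Q^j$ with the restriction loop set, the role of $I_T/I_t$, and the conformal pushforward of $R^j$), which the paper compresses into a single sentence invoking the restriction property.
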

Note that the expectation on the righthand side of \eqref{ELoop} is with respect
to $\Prob_*$. 

\begin{proof}
We may write
\[
\loopterm_T= \pastloopterm_t\, \frac{\loopterm_t}{\pastloopterm_t} \, \loopterm_{T,t},
\]
where
\[
\loopterm_{T,t} =\exp\left\{\frac{\cent}{2}\sum_{j=1}^nm_\Disk (\ell: t<s(\ell)\leq T, s(\ell)<s^j(\ell))  \right\},
\]
The term $\loopterm_{T,t}$ should be thought of as the ``future loop'' term, since it accounts for loops that hit at least two curves with the first hit occurring during $(t,T]$. 

The restriction property shows that
\[
\E^{\btheta_0} \left[ \pastloopterm_t  \left (\frac{\loopterm_t}{\pastloopterm_t}\right) \, \big \vert \, \bgamma_t \right]
= \pastloopterm_t \,\slepart_t.
\]
Moreover, the conditional distribution on $\bgamma_T\setminus \bgamma_t$, after tilting by $\pastloopterm_t  \left ({\loopterm_t}/{\pastloopterm_t}\right) $ is that of
independent $SLE$ in $D_t$. 
Since $\loopterm_{T,t}$ depends only on $\bgamma_T\setminus \bgamma_t$,
this gives  (\ref {condition}).

For the second part of the proposition, notice that 
\[ N_{t,T}\, M_t  = \tildeN{t}{T},\]
 which is a $\Prob$-martingale by construction, so $N_{t,T}$ is a $\Prob_{*}$-martingale. Since 
 $\E^{\btheta_T}\left[ \loopterm_0\right]=1$, this implies that 
\[
N_{t,T}=\E_*^{\btheta_0}\left[ N_{T,T} \mid \bgamma_t \right]
= \E_*^{\btheta_0}\left[  \exp \left \{ -ab \int_0^T \sumcsc (\btheta_s)\, ds \right \}  \,\vert\, \bgamma_t \right],
\]
which verifies (\ref{ELoop}). 
\end{proof}

\begin{proposition}\label{conditional_distr_T}
Let $\Prob_*^T$ denote the probability measure obtained by tilting $\Prob$ by $\tildeN{t}{T}$. Under $\Prob_*^T$, conditionally on $\hat \bgamma^j_T$, the distribution of $\gamma^j$ is $SLE_\kappa$ in $\mathbb D\setminus \hat \bgamma^j_T$.
\end{proposition}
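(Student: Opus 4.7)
My plan is to reduce the statement to the single-curve restriction property (Proposition \ref{restriction}), applied to $\gamma^j$ in $\Disk$ with the fixed set $K=\hat\bgamma^j_T$. Under $\Prob$ the $n$ curves are independent, so conditional on $\hat\bgamma^j_T$ the marginal law of $\gamma^j$ is radial $SLE_\kappa$ from $z^j$ to $0$ in $\Disk$. Since $\Prob_*^T$ arises from $\Prob$ by tilting by the martingale $\tildeN{t}{T}$, whose terminal value represents $\loopterm_T$, the conditional density of $\gamma^j$ under $\Prob_*^T$ given $\hat\bgamma^j_T$, relative to radial $SLE_\kappa$ in $\Disk$, is proportional in $\gamma^j$ to $\E[\loopterm_T\mid \gamma^j,\hat\bgamma^j_T]$. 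By Proposition \ref{restriction} it therefore suffices to show this expectation equals $C(\hat\bgamma^j_T)\,\mathbf 1\{\gamma^j\cap \hat\bgamma^j_T=\eset\}\exp\{(\cent/2)\,m_\Disk(\gamma^j,\hat\bgamma^j_T)\}$.

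The heart of the argument is a combinatorial identity for loop measures. Classifying each Brownian loop $\ell$ by the sets $J_T(\ell)=\{i:\gamma^i_T$ hits $\ell\}$ and $J_\infty(\ell)=\{i:\gamma^i$ hits $\ell\}$ and counting contributions yields
\[
\sum_{i=1}^n m_\Disk(\loops{i}{T}) \;=\; \sum_{i=1}^n m_\Disk(\gamma^i,\hat\bgamma^i_T)\;-\;m_\Disk\bigl\{\ell:|J_T(\ell)|\geq 2\bigr\},
\]
together with the factorization $\indicator_T=\prod_{i=1}^n \mathbf 1\{\gamma^i\cap \hat\bgamma^i_T=\eset\}$. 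These two identities rewrite
\[
\loopterm_T \;=\; \exp\!\bigl\{-\tfrac{\cent}{2}\,m_\Disk\{|J_T|\geq 2\}\bigr\}\,\prod_{i=1}^n F_i,\qquad F_i := \mathbf 1\{\gamma^i\cap \hat\bgamma^i_T=\eset\}\exp\!\bigl\{\tfrac{\cent}{2}\,m_\Disk(\gamma^i,\hat\bgamma^i_T)\bigr\},
\]
that is, as the product of the $n$ single-curve restriction weights $F_i$ for Proposition \ref{restriction} divided by an $\bgamma_T$-measurable correction. Pulling out the $i=j$ factor $F_j$ yields the sought $\gamma^j$-dependence.

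To complete the argument one evaluates $\E[\prod_{i\ne j}F_i\mid \gamma^j,\hat\bgamma^j_T]$. Under $\Prob$ the continuations $\gamma^i\setminus \gamma^i_T$ for $i\ne j$ are independent radial $SLE_\kappa$ curves from $\gamma^i(T)$ to $0$ in $\Disk$, so this expectation factorizes over $i\ne j$; each factor equals $M^{(i)}_T=\mathbf 1\{\gamma^i_T\cap \hat\bgamma^i_T=\eset\}\exp\{(\cent/2)m_\Disk(\gamma^i_T,\hat\bgamma^i_T)\}\slepart(D_T^i,D_T;\gamma^i(T),0)$, the time-$T$ value of the restriction martingale from Proposition \ref{restriction} applied to $\gamma^i$ with $K_i=\hat\bgamma^i_T$. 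The main technical obstacle is to verify that, once multiplied by the correction $\exp\{-(\cent/2)m_\Disk\{|J_T|\geq 2\}\}$, the residual $\gamma^j_T$-dependence in $\prod_{i\ne j}M^{(i)}_T$---which enters through $\hat\bgamma^i_T\ni\gamma^j_T$ in the loop measures and through $D_T=\Disk\setminus\bgamma_T$ in the partition functions $\slepart(D_T^i,D_T;\gamma^i(T),0)$---cancels exactly. The loop-measure portion of this cancellation is the truncated analogue of the identity above, using the splitting $|J_T(\ell)|=|J_T(\ell)\setminus\{j\}|+\mathbf 1\{\gamma^j_T\text{ hits }\ell\}$; the partition-function portion follows from the conformal covariance of $\slepart$ in the common parameterization. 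Once this cancellation is established, the conditional expectation depends on $\gamma^j$ only through $F_j$, and Proposition \ref{restriction} applied to $\gamma^j$ with $K=\hat\bgamma^j_T$ identifies the conditional law of $\gamma^j$ as $SLE_\kappa$ in $\Disk\setminus \hat\bgamma^j_T$ from $z^j$ to $0$.
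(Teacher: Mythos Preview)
The paper disposes of this proposition in a single line invoking the restriction property, so there is little to compare against; your strategy --- decompose $\loopterm_T$ via the loop-counting identity and reduce to the single-curve restriction martingale --- is in the same spirit, and your combinatorial identity $\sum_i m_\Disk(\loops{i}{T}) = \sum_i m_\Disk(\gamma^i,\hat\bgamma^i_T) - m_\Disk\{|J_T|\ge 2\}$ is correct.

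However, the step you label the ``main technical obstacle'' is a real gap, and the cancellation you assert does not occur. Take $n=2$, $j=1$: after your loop-measure cancellation, what remains on the nonintersection event is
\[
\E\bigl[\loopterm_T \,\big|\, \gamma^1,\gamma^2_T\bigr] \;=\; F_1\cdot \slepart\bigl(D_T^2,\,D_T;\,\gamma^2(T),0\bigr).
\]
The surviving factor $\slepart(D_T^2,D_T;\gamma^2(T),0)$ is precisely the restriction weight for deleting $\gamma^1_T$ from $D_T^2$; it equals $1$ when $\gamma^1_T$ is empty and tends to $0$ when $\gamma^1_T$ nearly separates $\gamma^2(T)$ from the origin, so it is a genuinely nonconstant function of $\gamma^1_T$. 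Conformal covariance of $\slepart$ does not remove this dependence, and nothing else in your decomposition is available to absorb it. For general $n$ the product $\prod_{i\ne j}\slepart(D_T^i,D_T;\gamma^i(T),0)$ similarly depends on $\gamma^j_T$ through $D_T=\Disk\setminus\bgamma_T$. Hence the conditional density you obtain is not proportional to $F_j$ alone, and the reduction to Proposition~\ref{restriction} with $K=\hat\bgamma^j_T$ does not close. (There is also a subtler issue at the outset: because the common parameterization entangles all $n$ curves, $\hat\bgamma^j_T$ is not measurable with respect to $(\gamma^i)_{i\ne j}$ alone, so even the claim that the $\Prob$-conditional law of $\gamma^j$ given $\hat\bgamma^j_T$ is $SLE_\kappa$ in $\Disk$ needs an argument.)
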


\begin{proof}
	The result follows by an application of the restriction property.
\end{proof}

The next result, which gives the exponential rate of convergence of $\E^{\btheta_0}[\loopterm_T]$, is a direct application of Theorem \ref{exponentialrateofconv1}.

\begin{corollary} \label{expconvloopversion} 
	There exists $u=u(2a,n)>0$ such
	that as $T \rightarrow \infty$,
	\[ 
	\E^{\btheta_0}\left[ \loopterm_T\right] = 
	\frac{\Integral{3a}}{\Integral{4a}}\,
	F_a(\btheta_0) \, e^{-2an \beta T}
	\, \left [1 + O(e^{-uT})\right],\]
		where $\beta$ is given by (\ref{eq:defbeta}).
\end{corollary}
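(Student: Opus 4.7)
The plan is to observe that this corollary follows directly by combining the identity (\ref{ELoop}) from the preceding proposition with Theorem \ref{exponentialrateofconv1}, after correctly matching parameters.

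First, setting $t = 0$ in (\ref{ELoop}) gives
\[
\E^{\btheta_0}\left[\loopterm_T\right] = \E^{\btheta_0}_*\left[\exp\left\{-ab \int_0^T \sumcsc(\btheta_s)\,ds\right\}\right],
\]
where $\Prob_*$ is locally independent $SLE_\kappa$. By Proposition \ref{locally_independent}, under $\Prob_*$ the driving functions $\btheta_t$ satisfy
\[
d\theta^j_t = a \sum_{k \neq j} \cot(\theta^j_t - \theta^k_t)\,dt + dW^j_t,
\]
which is exactly equation (\ref{aBessel}) with $\alpha = a$. Thus the law of $\btheta_t$ under $\Prob_*$ coincides with its law under $\Prob_a$ from Proposition \ref{Mart_a}.

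Next, I would match the exponent: with $\alpha = a$, the constant $b_\alpha = (3\alpha - 1)/2$ becomes $b = (3a-1)/2$, so $\alpha b_\alpha = ab$. The integrand $-ab\,\sumcsc(\btheta_s)$ is therefore precisely the integrand $-\alpha b_\alpha \, \psi(\btheta_s)$ appearing in Theorem \ref{exponentialrateofconv1}. Since the hypothesis $0 < \kappa \leq 4$ translates to $a = 2/\kappa \geq 1/2$, the theorem applies, yielding
\[
\E^{\btheta_0}_a\left[\exp\left\{-ab \int_0^T \sumcsc(\btheta_s)\,ds\right\}\right] = e^{-2an\beta T}\, F_a(\btheta_0)\, \frac{\Integral{3a}}{\Integral{4a}}\,[1 + O(e^{-uT})],
\]
with $\beta = a(n^2-1)/4$ and $u = u(2a, n) > 0$. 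Substituting back gives the claimed asymptotic.

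There is essentially no obstacle here: the corollary is a bookkeeping application of results already established. The only care needed is (i) verifying that $\Prob_*$ and $\Prob_a$ induce the same law on $\btheta_t$, which is immediate from comparing the SDEs (\ref{Bessel-a}) and (\ref{aBessel}) at $\alpha = a$, and (ii) checking the identification $\alpha b_\alpha = ab$ at $\alpha = a$, which is immediate from the definition $b_\alpha = (3\alpha-1)/2$. The genuine analytic content has been offloaded to Theorem \ref{exponentialrateofconv1}, whose proof (carried out in Section \ref{sec:DysonBM}) is where the exponential relaxation of the $n$-radial Bessel process does the real work.
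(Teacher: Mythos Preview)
Your proof is correct and follows essentially the same approach as the paper: both set $t=0$ in (\ref{ELoop}) to express $\E^{\btheta_0}[\loopterm_T]$ as a $\Prob_*$-expectation and then invoke Theorem \ref{exponentialrateofconv1} with $\alpha=a$. Your version simply makes explicit the identifications $\Prob_*=\Prob_a$ and $\alpha b_\alpha=ab$ that the paper leaves implicit.
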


\begin{proof}
	Notice that (\ref{ELoop}) implies that
	\begin{equation}\label{ELoop2}
	\tildeN{0}{T}=
	\E^{\btheta_0}\left[
	\loopterm_T \right] = \E^{\btheta_0}_*\left[  \exp\left\{-ab\int_0^{T} \sumcsc(\btheta_s) \,ds
	\right\} \right]. 
	\end{equation}
	Substituting this into Theorem \ref{exponentialrateofconv1} gives the result.
\end{proof}
We define the \emph{$n$-interior scaling exponent}:
\begin{equation}\label{eq:n_scalingexp}
\hat \beta_n = \beta -\tilde b(n-1) = \frac{4(n^2-1) + (6-\kappa)(\kappa-2)}{8\kappa},
\end{equation}
where $\beta$ is defined by (\ref{eq:defbeta}).

\begin{proposition} \label{prop:tildeM}  
	With respect to $\Prob$,
	\[    \tilde M_t :=  e^{2an\hat \beta_n t}\, \pastloopterm_t\,
	F_a(\btheta_t)\]
	is a local martingale.
	If $\multProb$ denotes the measure obtained by
	tilting by $\tilde M_t$, then
	\begin{equation}\label{eq:SDE2a}
	d \theta_t^j = 2a \sum_{k \neq j}
	\cot(\theta_t^j -\theta_t^k) \, dt + dW_t^j,
	\end{equation}
	where $W_t^1,\ldots,W_t^n$ are independent
	standard Brownian motions with respect to
	$\multProb$. 
\end{proposition}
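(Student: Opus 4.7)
The approach is to decompose $\tilde M_t$ as a product of already-identified local martingales. By Proposition \ref{locally_independent}, the process
\[M_t = \pastloopterm_t\,\Psi_t\,\exp\!\left\{ab\int_0^t \sumcsc(\btheta_s)\,ds\right\}\]
is a $\Prob$-local martingale whose Girsanov tilt produces $\Prob_*$, under which the $a$-Dyson equation \eqref{Bessel-a} holds. Proposition \ref{prop:P2a} applied with $\alpha = a$ (so that $b_\alpha = b$) provides a $\Prob_*$-martingale
\[N_t = F_a(\btheta_t)\,\exp\!\left\{\tfrac{1}{2}a^2 n(n^2-1)\,t - ab\int_0^t \sumcsc(\btheta_s)\,ds\right\},\]
whose tilt upgrades the $a$-Dyson SDE to the $2a$-Dyson SDE, so that $M_tN_t$ is a $\Prob$-local martingale realizing $\multProb$ on the driving functions. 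The arithmetic identity $2an\hat\beta_n = \tfrac{1}{2}a^2 n(n^2-1) - 2an(n-1)\tilde b$ then yields
\[\tilde M_t = M_t \cdot N_t \cdot V_t, \qquad V_t := \exp\{-2an(n-1)\tilde b\,t\}/\Psi_t,\]
with $V_0 = 1$.

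The proof thus reduces to verifying that $V_t$ is a $\Prob_{2a}$-local martingale, for then iterated Girsanov makes $\tilde M_t = M_tN_tV_t$ a $\Prob$-local martingale whose tilt agrees with $\multProb$ on $\btheta$. To carry this out, I would use the conformal-covariance representation
\[\Psi_t = \prod_{j=1}^n [h'_{t,j}(\xi_t^j)]^b \, \exp\!\left\{2a\tilde b\,(nt-\sigma^j(t))\right\},\]
obtained from $\Psi_t^j = |g_{t,j}'(z_t^j)|^b \, |g_{t,j}'(0)|^{\tilde b}$ together with $g_{t,j}'(0) = g_t'(0)/g_t^j{}'(0)$ and the common parameterization $g_t'(0) = e^{2ant}$, along with the identity $\dot\sigma^j(t) = 1/(h'_{t,j}(\xi_t^j))^2$ from Lemma \ref{time_change_derivative}. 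An It\^o expansion of $\log V_t$ under the $2a$-Dyson SDE then produces the drift--quadratic-variation cancellation needed, paralleling the martingale computation in the proof of Proposition \ref{locally_independent} but now with the drift $2a\sum_{k\neq j}\cot(\theta^j-\theta^k)$ rather than $a\sum_{k\neq j}\cot(\theta^j-\theta^k)$.

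The SDE \eqref{eq:SDE2a} then follows by Girsanov applied to $\tilde M_t$: the martingale part of $d\log\tilde M_t$ under $\Prob$ arises entirely from $d\log F_a(\btheta_t)$ and has $dB_t^j$-coefficient $a\sum_{k\neq j}\cot(\theta^j_t - \theta^k_t)$, which combined with the drift structure inherited through $M_t$ yields the claimed $2a\sum_{k\neq j}\cot(\theta^j_t - \theta^k_t)$ form. The main obstacle is the It\^o analysis of $\Psi_t$ under the $2a$-Dyson law: tracking the semimartingale decomposition of $\prod_j [h'_{t,j}(\xi_t^j)]^b$ together with the coupled time changes $\sigma^j(t)$ requires delicate combinatorics of cotangent--cosecant sums, and the cancellation producing the clean deterministic rate $\exp\{-2an(n-1)\tilde b\,t\}$ relies on trigonometric identities of the type used in the proof of Proposition \ref{Mart_a} (see Lemma \ref{lemma:trig_identity}).
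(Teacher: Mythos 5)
Your decomposition $\tilde M_t = M_t\,N_{t,a,2a}\,V_t$ with $V_t = e^{-2an(n-1)\tilde b\,t}/\slepart_t$ is the right starting point and matches the paper's strategy: the proposition is obtained by composing Proposition \ref{locally_independent} with Proposition \ref{prop:P2a} (applied with $\alpha=a$, so $b_\alpha=b$), and your arithmetic identity $2an\hat\beta_n = \tfrac12 a^2n(n^2-1)-2an(n-1)\tilde b$ is correct. However, the way you propose to dispose of $V_t$ contains a genuine gap, in two respects. First, the reduction ``it suffices to verify that $V_t$ is a $\Prob_{2a}$-local martingale'' is not sound: if $V_t$ were a nonconstant local martingale in the filtration of the driving Brownian motions, then tilting $\Prob_{2a}$ by $V_t$ would, via Girsanov, add a further drift $d\langle \log V, W^j\rangle_t$ to each $\theta^j$, and the resulting measure would \emph{not} satisfy \eqref{eq:SDE2a}. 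The only way your plan can close is to show $V_t\equiv 1$. Second, the step you identify as ``the main obstacle'' --- an It\^o expansion of $\log V_t$ under the $2a$-Dyson SDE with ``delicate combinatorics of cotangent--cosecant sums'' --- is left unexecuted, and it is misdirected: no stochastic calculus is needed at this stage.

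The paper's key observation, which your write-up misses, is that $\slepart_t$ is a \emph{deterministic} function of $t$, namely $\slepart_t = \exp\{-2a\tilde b\, n(n-1)t\}$, a consequence of conformal covariance of the partition function together with the common parameterization. Granting this, $V_t\equiv 1$ and $\tilde M_t = M_t N_{t,a,2a}$ identically, so the proposition follows at once: tilting $\Prob$ by $M_t$ produces the drift $a\sum_{k\neq j}\cot(\theta^j-\theta^k)$ of \eqref{Bessel-a}, and tilting further by $N_{t,a,2a}$ upgrades it to $2a\sum_{k\neq j}\cot(\theta^j-\theta^k)$ by Proposition \ref{prop:P2a}. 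Your conformal-covariance representation of $\slepart_t$ in terms of $h'_{t,j}(\xi^j_t)$ and $\sigma^j(t)$ is the right raw material; what is needed is to push it through to the deterministic formula for $\slepart_t$, not to set up a drift--quadratic-variation cancellation.
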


\begin{proof}
	Comparing (\ref{Bessel-a}) and (\ref{aBessel}), we see that tilting $n$ independent Brownian motions by $M_{t, a}$ gives the SDE satisfied by the driving functions of locally independent $SLE_\kappa$. 
	By Proposition \ref{prop:P2a}, tilting further by $N_{t,a,2a}$ (defined in (\ref{def:N}) above) gives driving functions that satisfy (\ref{eq:SDE2a}), which is the $n$-radial Bessel equation (\ref{aBessel}) for $\alpha=2a$. 
	This implies that $\Prob_{2a}$ is obtained by tilting $\Prob$ by $M_t N_{t,a,2a}$. 
	
	To verify that 	\[
	M_t N_{t,a,2a} = \tilde M_t,
	\]
	we use the fact that
	\[
	\slepart_t=\exp\left\{ -2a\tilde b n(n-1)t \right\}.
	\]
	which follows from conformal covariance of the partition function.
\end{proof}

As above, let $\Prob$ denote the measure on $n$ independent radial $SLE_\kappa$ curves from $\btheta_0$ to $ 0$ with the $a$-common parameterization.
\begin{theorem}\label{maintheorem} 
	Let $0 < \kappa \leq 4$.  
Let $t>0$ be fixed. For each $T>t$, let 
	$\mu_{T}=\mu_{T,t}$ denote the measure
	whose Radon-Nikodym
	derivative with respect to $\Prob$ is
	\[                  \frac{  \loopterm_T}
	{ \E^{\btheta_0}\left[  \loopterm_T\right]}.\]
	Then as $T \rightarrow \infty$, the measure $\mu_{T,t}$
	approaches $\multProb$ with respect to the variation distance. Furthermore, the driving functions $z^j_t=e^{2i \theta^j_t}$ satisfy
	\begin{equation}
	d \theta_t^j = 2a \sum_{k \neq j}
	\cot(\theta_t^j -\theta_t^k) \, dt + dW_t^j,
	\end{equation}
	where $W^j_t$ are independent standard Brownian motions in $\multProb$.
\end{theorem}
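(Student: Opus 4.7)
The plan is to identify the Radon--Nikodym derivative of $\mu_{T,t}$ with respect to $\Prob$ on the $\sigma$-algebra $\F_t$ generated by $\bgamma_t$, show that it converges pointwise to the Radon--Nikodym derivative of $\multProb$ with respect to $\Prob$ on $\F_t$, and then invoke Scheff\'e's lemma. First I would observe that the Radon--Nikodym derivative of $\mu_{T,t}$ with respect to $\Prob$ restricted to $\F_t$ is, by the tower property,
\[
\frac{\tildeN{t}{T}}{\tildeN{0}{T}} = \frac{\E^{\btheta_0}[\loopterm_T \mid \bgamma_t]}{\E^{\btheta_0}[\loopterm_T]}.
\]
Applying \eqref{condition} to the numerator yields
\[
\frac{\tildeN{t}{T}}{\tildeN{0}{T}} = \frac{\pastloopterm_t\,\slepart_t\,\E^{\btheta_t}[\loopterm_{T-t}]}{\E^{\btheta_0}[\loopterm_T]}.
\]

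Next I would plug in Corollary \ref{expconvloopversion} twice, once in the numerator with initial data $\btheta_t$ and time horizon $T-t$, and once in the denominator with initial data $\btheta_0$ and horizon $T$. The common prefactor $\Integral{3a}/\Integral{4a}$ cancels, leaving
\[
\frac{\tildeN{t}{T}}{\tildeN{0}{T}} \;=\; \pastloopterm_t\,\slepart_t\,\frac{F_a(\btheta_t)}{F_a(\btheta_0)}\,e^{2an\beta t}\,\bigl[1 + O(e^{-u(T-t)})\bigr].
\]
Using $\slepart_t = \exp\{-2a\tilde b\,n(n-1)t\}$ (as recorded at the end of the proof of Proposition \ref{prop:tildeM}) together with $\hat\beta_n = \beta - \tilde b(n-1)$ from \eqref{eq:n_scalingexp}, the exponentials collapse into $e^{2an\hat\beta_n t}$, so the ratio becomes
\[
\frac{\tildeN{t}{T}}{\tildeN{0}{T}} \;=\; \frac{\tilde M_t}{\tilde M_0}\,\bigl[1 + O(e^{-u(T-t)})\bigr],
\]
with $\tilde M_t = e^{2an\hat\beta_n t}\,\pastloopterm_t\,F_a(\btheta_t)$ the martingale of Proposition \ref{prop:tildeM} and $\tilde M_0 = F_a(\btheta_0)$. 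By definition of $\multProb$ from that proposition, $\tilde M_t/\tilde M_0$ is precisely the Radon--Nikodym derivative of $\multProb$ with respect to $\Prob$ on $\F_t$.

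To pass from pointwise convergence on $\F_t$ to convergence in variation distance, I would invoke Scheff\'e's lemma: the densities $\tildeN{t}{T}/\tildeN{0}{T}$ converge $\Prob$-a.s.\ to $\tilde M_t/\tilde M_0$ as $T \to \infty$, and both are probability densities with respect to $\Prob\lvert_{\F_t}$ that integrate to $1$. Hence the $L^1(\Prob)$ convergence of the densities holds, which is the total variation convergence of the measures. Finally, the SDE for the driving functions under $\multProb$ is already contained in Proposition \ref{prop:tildeM} and requires no further argument.

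The main obstacle is making sure that the pointwise convergence step is genuinely pointwise $\Prob$-a.s.\ on $\F_t$ and not merely in $\Prob$-measure, because the error term $O(e^{-u(T-t)})$ from Corollary \ref{expconvloopversion} enters with the random argument $\btheta_t$. Thus I need to check, when applying Corollary \ref{expconvloopversion} to $\E^{\btheta_t}[\loopterm_{T-t}]$, that the implicit constant in the $O$ can be chosen independently of the configuration $\btheta_t$, or at least bounded uniformly on the event $\{\indicator_t = 1\}$ (since outside this event $\pastloopterm_t = 0$ and the ratio vanishes). This uniformity, which should follow by inspection of the proof of Theorem \ref{exponentialrateofconv1} and the compactness of the configuration space $\mathcal X_n$, is the only nontrivial input beyond the clean algebra above; once it is in hand, Scheff\'e's lemma finishes the argument.
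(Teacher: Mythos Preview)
Your proposal is correct and follows essentially the same route as the paper: identify the conditional Radon--Nikodym derivative as $\tildeN{t}{T}/\tildeN{0}{T}$, apply Corollary \ref{expconvloopversion} to both numerator and denominator, and recognize the limit as $\tilde M_t/\tilde M_0$. Your use of Scheff\'e's lemma is a clean replacement for the paper's terse final step, and the uniformity you flag is indeed available: Proposition \ref{prop:invdensity}, which underlies Theorem \ref{exponentialrateofconv1} and hence Corollary \ref{expconvloopversion}, states the $O(e^{-ut})$ bound uniformly over all initial configurations $\btheta \in \mathcal X_n$.
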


%
\begin{proof}
	We see that
	\begin{equation}
	\begin{aligned}
	\frac{d \mu_{T,t}}{d\Prob_t} =  \frac{\E^{\btheta_0} \left[ \loopterm_T  \mid \bgamma_t \right]}{ \E^{\btheta_0}\left[  \loopterm_T\right]}
	&=\frac{\tildeN{t}{T}}{\tildeN{0}{T}}.
	\end{aligned}\end{equation}
	By Proposition \ref{prop:tildeM}, $\multProb$ is obtained by tilting $\Prob$ by $\tilde M_t$, so we compare $\tildeN{t}{T}$ to $\tilde M_t$ and apply Corollary \ref{expconvloopversion}:
	
	\begin{equation}
	\begin{aligned}
	\frac{d \mu_{T,t}/d \Prob_t}{ d\multProb_{t}/d\Prob_t}
	&= \frac{\tildeN{t}{T}/\tildeN{0}{T}}{ \tilde M_t / \tilde M_0} \\
	&= \frac{\E^{\btheta_t}\left[ \loopterm_{T-t}\right] F_a(\btheta_0) }{\E^{\btheta_0}\left[ \loopterm_T\right] F_a(\btheta_t) \exp \left\{ \frac{a^2 n(n^2-1)}{2}t\right\} }
 \frac{ }	{  }\\
	&= 1+O (e^{-u(T-t)}).
	\end{aligned}
	\end{equation}
%
%
Therefore,
	\[
	\lim_{T\to \infty} \left[
	\frac{d \multProb_{t}}{d \Prob_t} \left(\frac{d \mu_{T,t}} {d\Prob_t} -\frac{d\multProb_{t}}{d \Prob_t} \right) \right]=0.
	\]
But $\frac{d \multProb_{t}}{d \Prob_t}$ is constant (since $t$ is fixed), so this implies convergence of $\mu_{T,t}$ to $\multProb_{t}$ in the variation distance.

\end{proof}

\begin{definition} Let $0<\kappa\leq 4$.
	If the curves $\gamma^1, \ldots, \gamma^n$ are distributed according to $\multProb$, we call $\bgamma$ \emph{(global) $n$-radial $SLE_\kappa$}.
\end{definition}

\begin{corollary}
	Let $\bgamma$ be $n$-radial $SLE_\kappa$ for $0<\kappa \leq 4$. With probability one, $\bgamma$ is an $n$-tuple of simple curves.
\end{corollary}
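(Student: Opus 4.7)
The plan is to transfer almost-sure properties from the reference measure $\Prob$ (independent radial $SLE_\kappa$ in the common parameterization) to $\multProb$ via absolute continuity, then promote these to the full curves by monotonicity in $t$. By Proposition \ref{prop:tildeM}, the measure $\multProb$ restricted to the $\sigma$-algebra generated by $\bgamma_t$ has Radon-Nikodym derivative $\tilde M_t / \tilde M_0$ with respect to $\Prob$; in particular $\multProb_t \ll \Prob_t$ for every $t > 0$, so any $\Prob_t$-almost sure event is also $\multProb_t$-almost sure.

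Under $\Prob$, each $\gamma^j$ is an independent radial $SLE_\kappa$ curve, and for $\kappa \leq 4$ it is classical that each such curve is simple almost surely. Hence, for each fixed $t > 0$, the event that every initial segment $\gamma^j_t$ is a simple arc has full $\Prob_t$-measure, and therefore also full $\multProb_t$-measure. For the mutual non-intersection, I would exploit the structure of $\tilde M_t = e^{2an\hat\beta_n t}\,\pastloopterm_t\,F_a(\btheta_t)$: the factor $\pastloopterm_t = \pastindicator_t \exp\{\cdots\}$ vanishes off the event $\{\gamma^j_t \cap \gamma^k_t = \emptyset,\ j \neq k\}$, so $\tilde M_t = 0$ on the complement of this event, forcing that complement to have $\multProb_t$-measure zero.

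Finally I would promote these fixed-$t$ conclusions to the full curves. Both properties, ``$\gamma^j_t$ is simple'' and ``$\gamma^j_t \cap \gamma^k_t = \emptyset$ for $j \neq k$,'' are monotone in $t$: once violated at some $t_0$, they are violated at every $t \geq t_0$. Intersecting the $\multProb$-full-measure events along a countable sequence $t_m \uparrow \infty$ then yields, $\multProb$-almost surely, that each $\gamma^j$ is simple on $[0, \infty)$ and that the traces $\gamma^j$ and $\gamma^k$ are disjoint on $[0, \infty)$ whenever $j \neq k$. The common terminal point $0$ is attained only in the limit $t \to \infty$, which is consistent with $\bgamma$ being an $n$-tuple of simple curves.

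The main conceptual point is really only a sanity check rather than an obstacle: one must observe that $\multProb_t \ll \Prob_t$ holds despite $\tilde M_t/\tilde M_0$ being permitted to vanish on large sets, but this is automatic from the definition of the Radon-Nikodym derivative (the integral over any $\Prob_t$-null set is zero). Alternatively, the same conclusion can be reached via Theorem \ref{maintheorem}: since each approximating measure $\mu_{T,t}$ charges only configurations with $I_T = 1$, convergence in variation distance transmits the non-intersection property to $\multProb_t$, and simpleness of individual curves again follows from the classical $\kappa \leq 4$ fact applied under $\Prob_t$.
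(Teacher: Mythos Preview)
Your argument is correct and follows the same route as the paper: absolute continuity of $\multProb_t$ with respect to $\Prob_t$ combined with the classical fact that radial $SLE_\kappa$ curves are simple for $\kappa\le 4$. The paper's proof is a two-line version of exactly this; you additionally extract mutual non-intersection from the factor $\pastindicator_t$ in $\tilde M_t$, which is correct but goes beyond what the corollary asserts (an $n$-tuple of \emph{simple} curves, not necessarily mutually disjoint).
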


\begin{proof}
By construction, $n$-radial $SLE_\kappa$ is a measure on $n$-tuples of curves that is absolutely continuous with respect to $n$-independent $SLE_\kappa$. But since $0<\kappa\leq 4$, each independent $SLE_\kappa$ curve is almost surely simple.
\end{proof}

To conclude this section, we remark that the results above do not address the question of continuity at $t=\infty$. Additionally, it would be natural to extend the definition of $n$-radial $SLE$ to apply to $\kappa \in (0, 8)$ by using the measure $\Prob_{2a}$ instead of $\multProb$, but we will not consider this here.

\section{Locally independent $SLE$} \label{sec:locallyindepSLE}

Here we discuss locally independent $SLE$  and explain how it arises as a limit of processes that act like ``independent $SLE$ paths in the current domain.''
For ease we will do the chordal case and $2$ paths, but the same idea works for any number of paths and for radial $SLE.$ Locally independent $SLE$ is defined here for all $\kappa<8$, but when $\kappa \leq 4$ the radial version is the same as the process defined in Proposition \ref{locally_independent}. 

This construction clarifies the connection between locally independent $SLE$ and commuting $SLE$ defined in \cite{Dubedat}. Intuitively, given a sequence of commuting $SLE$ increments, as the time duration of the increments goes to $0$, the curves converge to locally independent $SLE$.

Throughout this section we write $\B_t = (B_t^1,B_t^2)$
for a standard two-dimensional Brownian motion, that is, two independent one-dimensional Brownian motions.
We will use the fact that $\B_t$ is H\"older continuous. We give a quantitative version here
which is stronger than we need. 

\begin{itemize}
	\item  Let $E_h$ denote the event that for all
	$0 \leq t \leq 1/h$ and all $0 \leq s \leq h$,
	\[       |\B_{t+s} - \B_t|
	\leq s^{1/2} \, \log(1/s) . \]
	Then as $h \rightarrow 0$, $\Prob(E_h^c)$ decays
	faster than any power of $h$.
\end{itemize}

We will define the discrete approximation using the same Brownian motions as for the continuum and then the convergence follows from deterministic estimates coming from the Loewner equation.  Since these are standard we will not give full details.
We first define the process.
Let $a = 2/\kappa$.

\begin{definition}  Let $\X_t =(X_t^1,X_t^2)$ be the
	solution to the SDEs,
	\[  dX_t^1 = \frac{a}{X_t^1 - X_t^2} \, dt
	+ dB_t^1, \;\;\;\;\;
	dX^2_t = \frac{a}{X_t^2 - X_t^1} \, dt
	+ dB_t^2 , \]
	with $X_0^1 = x_1, X_0^2 = x_2$. 
	Let $\tau_u = \inf\{t: |X_t^2- X_t^1| \leq u\}$,
	$\tau = \tau_{0+}$.  
\end{definition}

Note that $Z_t :=  X_t^2 - X_t^1 $ satisfies
\[       dZ_t = \frac{ 2a}{Z_t} \,dt + \sqrt 2 \,
dW_t, \]
where $W_t := (B_t^2 - B_t^1)/\sqrt 2$ is a
standard Brownian motion. This is a (time change of a)
Bessel process from which we see that $\Prob\{\tau < \infty\} = 0$ if and only if $\kappa \leq 4$. 
If $4 < \kappa < 8$ we can continue the process for all $\tau < \infty$ by using reflection.  We will consider
only $ \kappa < 8$.

\begin{definition}  If $\kappa < 8$,
	locally independent $SLE_\kappa$
	is defined to be the collection of conformal maps $g_t$ satisfying the Loewner equation
	\[    \p_t g_t(z) = \frac{a}{g_t(z) - X_t^1}
	+ \frac{a}{g_t(z) - X_t^2} , \;\;\;\;
	g_0(z) = z. \]
	This is defined up to time
	\[        T_z = \sup\{t: \Im[g_t(z)] > 0\}. \]
\end{definition}

Locally independent $SLE_\kappa$ produces a
pair of curves $\bgamma(t) = (\gamma^1(t),\gamma^2(t)).$   Note that $\hcap[\bgamma_t] = 2at$.  If $
\kappa \leq 4$, then $\gamma^1_t \cap \gamma^2_t \neq \eset$; this is not true for all $t < \tau$ if
$4 < \kappa < 8$.

Let us fix a small number $h = 1/n$ and consider the
process viewed at time increments $\{kh:k=0,1,\ldots\}$. The following estimates  hold uniformly for $k \leq 1/h$
on the event $E_h$.   The first comes just by the definition of the SDE and the second uses the
Loewner equation.  Let
$\Delta_k^j = \Delta_{k,h}^j  = B^j_{ kh} - B_{(k-1)h}^j$.
\begin{itemize}
	\item  If $|X_{hk}^2 - X_{hk}^1 | \geq h^{1/8}$,
	then
	\begin{equation}  \label{1.1}
	X_{ (k+1)h}^j = X_{kh}^j
	+ \frac{ah}{X_{kh}^j - X_{kh}^{3-j}}
	+ \Delta_{k+1}^j
	+ o(h^{4/3}). 
	\end{equation}
	\item If $\Im[g_{hk}(z)] \geq u/2$, 
	and $ 0 \leq s \leq h$,
	\begin{equation}  \label{1.2}  g_{kh+s}(z) = g_{kh}(z)
	+ \frac{as}{g_{kh}(z)- X_{kh}^1}
	+ \frac{as}{g_{hk}(z)- X_{kh}^2}
	+ o_u(h^{4/3}). 
	\end{equation}
\end{itemize}

%
%
%

We will compare this process to the process which
at each time $kh$ grows independent $SLE_\kappa$
paths in the current domain, increasing the capacity of each path  by $h$.  Let us start with
the first time period in which we have independent
$SLE$ paths.  Again, we restrict to the event $E_h$.
\begin{itemize}
	\item Let $\tilde \gamma^1,\tilde \gamma^2$ be independent $SLE_\kappa$
	paths starting at $x_1,x_2$ respectively with driving
	function $\tilde X^j_t =   B^j_t$,  each run until
	time $h$.  To be more precise if $\tilde g^j_t:
	\Half \setminus \gamma^j_t \rightarrow \Half$
	is the standard conformal transformation, then
	\[       \p_t \tilde g^j_t(z)
	= \frac{a}{g^j_t(z) - \tilde X^j_t}, \;\;\;\;
	g^j_0(z) = z, \;\;\;\; 0 \leq t \leq h\]
	Note that   $\hcap[\gamma^1_t]
	= \hcap[\gamma^2_t] = ah$. Although $\hcap[\bgamma_t]
	< 2ah$,  if $|x_2 - x_1| \geq h^{1/8}$, 
	\[  \hcap[\bgamma_t] = 2ah - o(h^{4/3}).\]
	This defines $\bgamma_t$ for $0 \leq t \leq h$ and
	we get corresponding conformal maps
	\[   \tilde g_t    :
	\Half \setminus \bgamma_t \rightarrow
	\Half, \;\;\;\; 0 \leq t \leq h .\]
	If $\Im[ z] \ \geq 1/2$, then
	\[  \tilde g_{h}(z) = z + \frac{ah}
	{z - x^1} + \frac{ah}{z-x^2}
	+ o(h^{4/3}).\]
	Also, by writing $\tilde g_h =  \phi \circ  \tilde  g_h^j$,
	we can show  that
	\[   \hat X_h^j := \tilde g_h(\tilde \gamma^j(h)) = 
	\tilde X_h^j + \frac{ah}{x_j - x_{3-j}}
	+ o(h^{4/3}).\]
	
	\item Recursively, given $\hat X_{kh}$
	and  $\tilde \gamma_t^1,
	\tilde \gamma_t^2$ and $\tilde g_t$ for
	$0 \leq t \leq kh$ (the definition of these quantities depends on
	$h$ but we have suppressed that from the notation),  let 
	\[ \tilde X_{kh+t}^j
	= \hat X_{kh}^j + [B_{kh + t}^j -
	B_{k}^j], \;\;\;\; 0 \leq t \leq h , \]
	and let 
	$\hat \gamma_{t,k}^1,
	\hat \gamma_{t,k}^2, 0 \leq t \leq h$ be independent $SLE_\kappa$
	paths with driving functions $\tilde X_{kh+t}^j$.
	For $j=1,2$, define 
	\[      \tilde \gamma_{kh+t}^j
	=   \tilde g_{kh}^{-1} [\hat \gamma_{t,k}^j], 
	\;\;\;\; 0 \leq t \leq h.\]
	This defines $\tilde \bgamma_{kh+t}, 0 \leq t \leq h$
	and $\tilde g_{kh+t}: \Half \setminus \bgamma_{kh+t}
	\rightarrow \Half$ is defined as before. Set
	\[   \hat X_{(k+1)h}^j = \tilde g_{(k+1)h}(\tilde{\gamma}^j((k+1)h)).\]
	Note that if $|\hat X_{kh}^2 - \hat X_{kh}^1| 
	\geq h^{1/8}$, then 
	\begin{equation}  \label{1.3}    \hat X_{(k+1)h}^j
	= \hat X_{kh}^j + \Delta^j_{(k+1)h}
	+ \frac{ah}{ \hat X_{kh}^j - \hat X_{kh}^{3-j}} + o(h^{4/3}). \end{equation}
	Also, if $\Im[\tilde g_{kh}(z)] \geq u/2$,
	\begin{equation}  \label{1.4}
	\tilde g_{(k+1)h}(z) =
	\tilde g_{kh}(z) +  \frac{ah}{ \tilde g_{kh}(z)
		- \hat X_{kh}^1 } + \frac{ah}{ \tilde g_{kh}(z)
		- \hat X_{kh}^2 }
	+ o_u(h^{4/3}).
	\end{equation}
	
	\item If at any time $\tilde \gamma_{h,k}^1
	\cap \tilde \gamma_{h,k}^2 \neq \eset$ this procedure
	is stopped.
\end{itemize}

Note that  we are using the same Brownian motions as
we used before.  

\begin{proposition} 
	With probability one, for all $t < \tau$ and all
	$z \in \Half \setminus \bgamma_t$,
	\[   \lim_{h \downarrow 0} \tilde g_t(z)
	= g_t(z) . \]
\end{proposition}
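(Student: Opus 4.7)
The plan is to fix a time $T$ with $T<\tau$, work on a high-probability event controlling modulus of continuity and separation of the driving functions, and then run a pair of Gronwall-type comparisons: first between the continuous and discrete driving processes, then between the continuous and discrete Loewner flows. Passage from lattice times $kh$ to arbitrary $t\leq T$ will be handled by the continuity of $g_t$ and $\tilde g_t$.

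First I would fix $T<\tau$. Since the gap $Z_t=X_t^2-X_t^1$ is almost surely continuous and nonzero on $[0,T]$, there is a random $\delta=\delta(T)>0$ such that $|Z_t|\geq 2\delta$ on $[0,T]$. On the event $E_h$ (whose complement has probability decaying faster than any polynomial), the H\"older control on $\B$ together with the drift bounds in \eqref{1.1} also keeps $|Z_{kh}|\geq \delta$ for all $k\leq T/h$ once $h$ is small enough, so in particular $|Z_{kh}|\geq h^{1/8}$ and the hypothesis in \eqref{1.1} is satisfied throughout.

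Next, I compare the discrete driving vectors. Writing $\epsilon_k=\max_{j}|X^j_{kh}-\hat X^j_{kh}|$, subtracting \eqref{1.3} from \eqref{1.1} and using that $\Delta^j_{(k+1)h}$ is the \emph{same} Brownian increment on both sides (this is the whole point of the coupling), the noise cancels exactly and we are left with
\[
|X^j_{(k+1)h}-\hat X^j_{(k+1)h}|
\leq \epsilon_k + ah\left|\tfrac{1}{X^j_{kh}-X^{3-j}_{kh}}-\tfrac{1}{\hat X^j_{kh}-\hat X^{3-j}_{kh}}\right|+o(h^{4/3}).
\]
The Lipschitz bound $|x^{-1}-y^{-1}|\leq \delta^{-2}|x-y|$ on the separated regime $|X^j-X^{3-j}|,|\hat X^j-\hat X^{3-j}|\geq \delta$ yields $\epsilon_{k+1}\leq (1+Ch)\epsilon_k+o(h^{4/3})$ for some deterministic $C=C(\delta,a)$. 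Iterating over $k\leq T/h$ steps gives
\[
\max_{k\leq T/h}\epsilon_k \leq e^{CT}\,\tfrac{T}{h}\,o(h^{4/3})=o(h^{1/3}),
\]
which tends to $0$.

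Then I would run the analogous Gronwall for the conformal maps. Fix $z\in\Half\setminus\bgamma_T$; by continuity of $t\mapsto \Im[g_t(z)]$ there exists $u>0$ with $\Im[g_t(z)]\geq u$ for all $t\leq T$. On $E_h$, subtracting \eqref{1.4} from \eqref{1.2} and using the Lipschitz bound on $w\mapsto (w-X)^{-1}$ uniformly for $\Im w\geq u/2$ and $X$ in a bounded set, together with the driving-function estimate from the previous paragraph, gives
\[
|g_{(k+1)h}(z)-\tilde g_{(k+1)h}(z)|\leq (1+C'h)\,|g_{kh}(z)-\tilde g_{kh}(z)|+C'h\,\epsilon_k+o_u(h^{4/3}).
\]
Gronwall again gives $\sup_{k\leq T/h}|g_{kh}(z)-\tilde g_{kh}(z)|\to 0$. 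Since both flows are equicontinuous in $t$ on $[0,T]$ on the set where $\Im[g_t(z)]$ stays above $u/2$, standard interpolation between grid times yields $\sup_{t\leq T}|g_t(z)-\tilde g_t(z)|\to 0$. Taking a countable dense set of $T<\tau$ and $z\in\Half\setminus\bgamma_T$ with rational coordinates, together with continuity in $z$, gives the full almost-sure statement.

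The main obstacle I anticipate is not the Gronwall machinery itself but the verification that the deterministic separation estimates remain valid on $E_h$ uniformly in the two coupled processes: one must be sure that the discrete process $\hat X$ does not wander into the collision region before the continuous process does. This is handled by a bootstrap on $\epsilon_k$ --- one first shows the comparison $\epsilon_k=o(1)$ holds up to the first grid time the separation drops below $\delta$, and then this small error propagates to show that in fact the separation condition holds on the entire $[0,T]$ window, closing the argument. The second, milder, obstacle is the need to exclude $z$ from the (random) trace; this is why the statement is pointwise in $z\in\Half\setminus\bgamma_t$ rather than uniform, and is handled by conditioning on $\Im[g_t(z)]>0$ for the fixed $z$.
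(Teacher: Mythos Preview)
Your proposal is correct and follows essentially the same approach as the paper: a Gronwall-type iteration first on the driving functions (using that the shared Brownian increments cancel when subtracting \eqref{1.1} and \eqref{1.3}), then a second Gronwall on the maps themselves via \eqref{1.2} and \eqref{1.4}, followed by interpolation between grid times. The paper organizes the argument slightly differently---working with the stopping times $\tau_u=\inf\{t:|X_t^2-X_t^1|\leq u\}$ and the quantity $K(u,h)=\sup\{|\tilde g_t(z)-g_t(z)|:\Im[g_t(z)]\geq u,\ t\leq \tau_u\wedge u^{-1}\}$ rather than fixing $T<\tau$ and a random separation $\delta$---but this is just a repackaging of the same localization, and both arrive at the same $O(h^{1/3})$ rate.
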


\begin{proof}  We actually prove more.  Let
	\[   K(u,h) =
	\sup\left\{|\tilde g_t(z) - g_t(z)|:
	\Im[g_t(z)] \geq u, t \leq \tau_u \wedge u^{-1}
	\right\}. \]
	Then for each $u > 0$, with probability one,
	\[  \lim_{h \downarrow 0} K(u,h) = 0 . \]
	
	We fix $u$ and allow constants to depend on $u$
	and assume that $\Im[g_t(z)] \geq u$. 
	Then, if 
	\[     \Theta_k = \max_{r \leq k  }
	|\X_{rh}^j - \tilde \X_{rh}^j| \]
	Then \eqref{1.1} and \eqref{1.3} imply that
	\[  \Theta_{k+1} \leq \Theta_k [1 + O(h)] +
	O(h^{4/3} ) , \]
	or if $\hat \Theta_k = \Theta_k + k \, h^{4/3}, $
	then 
	$ \hat \Theta_k \leq \hat \Theta_k[1 + O(h)]
	.$   This shows that $\hat \Theta_k$ is bounded
	for $k \leq (hu)^{-1}$ and hence
	\begin{equation}  \label{1.5}
	\Theta_k \leq c h^{1/3}, \;\;\;\;
	k \leq (hu)^{-1}.
	\end{equation}
	We now let
	\[         D_k = \max_{r \leq k}
	|g_{kh}(z) - \tilde g_{kh}(z) | , \]
	and see that \eqref{1.2}, \eqref{1.4}, and \eqref{1.5}
	imply 
	\[     D_{k+1} \leq D_k [1 + O(h)] +
	O(h^{4/3} ) , \]
	which then gives
	\[            D_k \leq c h^{1/3}, \;\;\;\;
	k \leq (hu)^{-1}.\]
	Note that for $kh \leq t \leq (k+1)h$.
	\[  g_t(z) = g_{kh}(z)  +O(h), \;\;\;\;
	\tilde g_t(z) = \tilde g_{kh}(z) + O(h), \]
	and hence for all $t \leq \tau_u \wedge u^{-1}$
	\[   \tilde g_t(z) = g_t(z) + O(h^{1/3}).\]

\end{proof}

\section{$n$-Radial Bessel process}\label{sec:DysonBM}

In this section we study
  the process that we call the $n$-particle radial Bessel process. The image of this process under the
  map $z \mapsto e^{2iz}$ will be called Dyson
  Brownian motion on the circle.   We fix integer $n\geq 2$ and allow constants to depend on $n$.   Let ${\mathcal X}'
 = {\mathcal X}_n' $ be the torus $[0, \pi)^n$ with periodic boundary conditions
  and ${\mathcal X}= {\mathcal X}_n$ the set of $ \btheta=(\theta^1, \ldots, \theta^n)\in {\mathcal X}'$ such that we can find representatives with
\begin{equation}  \label{nyd.2}
\theta^1<\theta^2< \cdots < \theta^n <\theta^{n+1}:= \theta^1 + \pi.
\end{equation}
Let $\mathcal X^*_n$ be the set of $\bz = (z^1,\ldots,z^n)$ with
$z^j = \exp \{2i\theta^j\}$  and $\btheta
\in \mathcal X$.  In other words, $\mathcal X^*_n$ is
the set of $n$-tuples of distinct points on the unit
circle ordered counterclockwise (with a choice of 
a first point).    Note
that $\abs{z^j-z^k}=2 \abs{\sin(\theta^k-\theta^j)}$. We let
\[
\begin{aligned}
\psi( \btheta) &= \sum_{j=1}^n \sum_{k\neq j} \csc^2(\theta^j-\theta^k) = 2 \sum_{1\leq j<k\leq n} \csc^2(\theta^j-\theta^k),
\\ F(\btheta)& =   \ \prod_{1\leq j<k\leq n} |\sin(\theta^k - \theta^j)| 
 = 2^{-n(n-1)/2} \prod_{1\leq j<k\leq n} \abs{z_k-z_j},
  \label{nyd.1}
\\ F_\alpha(\btheta)&=F(\btheta)^\alpha
\\ d( \btheta)&=\min_{1\leq j<k\leq n} \abs{\sin(\theta^{j+1}-\theta^j)}
\\     f_\alpha(\btheta)& = \Integral{\alpha}^{-1} \, F_\alpha(\btheta),  \;\;\;\;  \Integral{\alpha} =
 \int_{\mathcal X} F_\alpha(\btheta) \, d \btheta.
\end{aligned}
\]

Here $d \btheta$ denotes integration with respect
to Lebesgue measure restricted to ${\mathcal X}$.
\begin{remark}
We choose to represent points $z^j$ on the unit circle
as $\exp\{2i\theta^j\}$ (rather than $\exp\{i\theta^j\}$)
because the relation
\[  F_\alpha(\btheta) = 2^{-\alpha n(n-1)/2} \prod_{1\leq j<k\leq n} \abs{z_k-z_j}^\alpha,\]
makes it easy to relate measures on ${\mathcal X}_n$
with measures that arise
in random matrices. (See, for example, Chapter 2 of \cite{Forrester} for the distribution of the eigenvalues of the circular $\beta$-ensemble.)
Note that if $\theta^1,\ldots,\theta^n$ are independent
standard Brownian motions, then $z^1,\ldots,z^n$
are independent driftless Brownian motions on the
circle with variance parameter $4$.

\end{remark}


We will use the following trigonometric identity.

\begin{lemma} \label{lemma:trig_identity}
	If $\btheta\in \mathcal X_n$, 
	\begin{equation}\label{cotsum}
	\sum_{j=1}^n \left( \sum_{k\neq j} \cot(\theta^j -\theta^k) \right)^2 = \psi( \btheta)-\frac{n(n^2-1)}{3}.
	\end{equation}
\end{lemma}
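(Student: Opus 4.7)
The plan is to expand the square on the left-hand side and sort the resulting triple sum into diagonal and off-diagonal contributions. Writing
\[
\sum_{j=1}^n \Bigl( \sum_{k\neq j} \cot(\theta^j-\theta^k)\Bigr)^2 = \sum_j \sum_{k\neq j} \cot^2(\theta^j-\theta^k) + \sum_{j}\sum_{\substack{k,l\neq j\\ k\neq l}} \cot(\theta^j-\theta^k)\cot(\theta^j-\theta^l),
\]
I would handle the diagonal piece with the identity $\cot^2 x = \csc^2 x - 1$. This immediately produces $\psi(\btheta) - n(n-1)$, giving us $\psi(\btheta)$ up to a constant correction plus an off-diagonal triple sum.

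Next, the main step: evaluate the off-diagonal triple sum over distinct triples $\{i_1,i_2,i_3\}$. Grouping by unordered triple, each appears $3$ times as ``pivot $j$'' choices, and for each pivot the two orderings of the remaining pair give the same product. Setting $a=\theta^{i_1}-\theta^{i_2}$ and $b=\theta^{i_2}-\theta^{i_3}$, so that $\theta^{i_1}-\theta^{i_3}=a+b$, the contribution from one unordered triple equals
\[
2\bigl[\cot a \cot(a+b) - \cot a \cot b + \cot b\cot(a+b)\bigr].
\]
I would then invoke the angle-addition formula $\cot(a+b)=(\cot a\cot b -1)/(\cot a+\cot b)$; substituting yields the clean identity
\[
\cot a \cot(a+b) - \cot a \cot b + \cot b\cot(a+b) = -1,
\]
which is the cotangent version of the classical fact that when three angles sum to $0$, their pairwise cotangent products sum to $-1$. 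Hence every unordered triple contributes $-2$ to the off-diagonal sum.

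Finally I would count: there are $\binom{n}{3}$ unordered triples, so the off-diagonal part contributes $-2\binom{n}{3} = -n(n-1)(n-2)/3$. Adding this to the diagonal correction gives a total constant
\[
n(n-1) + \frac{n(n-1)(n-2)}{3} = \frac{n(n-1)(n+1)}{3} = \frac{n(n^2-1)}{3},
\]
which is exactly the claimed formula. The only subtle point is keeping track of multiplicities when collapsing ordered triples to unordered ones and verifying that the pivot-symmetrization produces exactly the three cyclic products whose sum is governed by the $a+b+c=0$ cotangent identity; everything else is bookkeeping.
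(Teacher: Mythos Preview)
Your proof is correct and follows essentially the same approach as the paper: expand the square, handle the diagonal with $\cot^2 = \csc^2 - 1$, and reduce the off-diagonal sum to $-2\binom{n}{3}$ via the three-term cotangent identity. The only cosmetic difference is that the paper phrases the key identity as $\cot(x-y)\cot(x-z)+\cot(y-x)\cot(y-z)+\cot(z-x)\cot(z-y)=-1$ for distinct $x,y,z$, while you parameterize the same relation via $a,b,a+b$ and the addition formula.
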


\begin{proof}
	We first note that if $x,y,z$ are distinct points
in $[0,\pi)$, then 
\begin{equation}\label{apr5.1}  
                 \cot(x - y) \, \cot (x-z)
                  + \cot (y-x) \, \cot(y-z)
                    +  \cot(z-x) \, \cot(z-y )
                    =  - 1 
                    \end{equation}
Indeed, without loss of generality, we may assume that $0 = x < y < z$
in which case the lefthand side is
\[      \cot(y-z) \,[\cot y - \cot z]  + \cot y \, \cot z  
 \]
which equals $-1$ using the sum formula
\[  \cot(y-z) = \frac{\cot y \, \cot z + 1}{\cot z
- \cot y}\]  

When we expand the square on the lefthand side of \eqref{cotsum}
we get the sum of two terms,
\begin{equation}  \label{jan2.1}
   \sum_{j=1}^n  \sum_{k \neq j  }  \cot^2 (\theta^j - \theta^k) 
\end{equation}
\begin{equation}  \label{jan2.2}
     \sum_{j=1}^n  \sum_{k=1}^n \sum_{l=1}^n
\, 1\{j \neq k, j \neq l, k \neq l \} 
 \cot(\theta^j - \theta^k) \, \cot (\theta^j - \theta^l) . \end{equation}
   Using $\cot^2 y + 1 = \csc^2 y$,
 we see that \eqref{jan2.1} equals
$     \psi(  \btheta)   - n(n-1)
$. We write \eqref{jan2.2} as $2$ times 
  \[    \sum 
        \left[\cot(\theta^j - \theta^k) \, \cot (\theta^j - \theta^l) 
             +  \cot(\theta^k - \theta^j) \, \cot (\theta^k - \theta^l)  +  \cot(\theta^l - \theta^j) \, \cot (\theta^l - \theta^k)  \right],
\]
where the sum is over all $3$ elements subsets $\{j,k,l\}$
of $\{1,\ldots,n\}$.  Using \eqref{apr5.1}, we see that \eqref{jan2.2} equals
\[      -2 \, \binom{n}{3} = -\frac{n(n-1)(n-2)}{3}  . \]
Therefore, the lefthand side of \eqref{cotsum}
equals
\[   \psi(  \btheta)   - n(n-1)-\frac{n(n-1)(n-2)}{3} 
= \psi(  \btheta)   -\frac{n(n^2-1)}{3}.
\]
\end{proof}

We will let $ \btheta_t=(\theta^1_t, \ldots, \theta^n_t)$ be a standard $n$-dimensional Brownian motion in $\mathcal X^*$ starting at $ \btheta_0 \in \mathcal X$ and stopped at
\[
T=\inf\{ t: \btheta_t\nin \mathcal X \}=\inf \{ t: d(\btheta_t)=0\},
\]
defined on the filtered probability space $(\Omega, \mathcal F_t,  \BMProb)$. 

Differentiation using \eqref{cotsum} shows that
\[
\begin{aligned}
\partial_j F_\alpha( \btheta)&=F_\alpha(\btheta) \sum_{k\neq j}\alpha \cot(\theta^j-\theta^k),
\\
\partial_{jj}F_\alpha( \btheta)&=F_\alpha( \btheta) \left[  \left( \sum_{k\neq j}\alpha \cot(\theta^j-\theta^k) \right)^2 -\alpha \sum_{k\neq j} \csc^2(\theta^j-\theta^k) \right],
\\
\Delta F_\alpha( \btheta) &= F_\alpha( \btheta)\left[ -\frac{\alpha^2n(n^2-1)}{3} + (\alpha^2-\alpha)\psi(\btheta) \right].
\end{aligned}
\]
Hence, if we define
\begin{equation}\label{Mart}
\begin{aligned}
M_{t,\alpha}:&= F_\alpha( \btheta_t) \exp \left \{-\frac{1}{2} \int_0^t \frac{\Delta F_\alpha(\btheta_s)}{F_\alpha( \btheta_s)}ds \right\}
\\ &= F_\alpha( \btheta_t) \exp \left \{ \frac{\alpha^2 n(n^2-1)}{6}t \right \} \, \exp \left\{ \frac{\alpha-\alpha^2}{2} \int_0^t \psi( \btheta_s) ds \right \}, \quad t<T,
\end{aligned}
\end{equation}
then $M_{t,\alpha}$  is a local martingale for $0 \leq t < T$
 satisfying
\[
dM_{t,\alpha}=M_{t,\alpha}\,\sum_{j=1}^n \left( \sum_{k\neq j} \alpha \cot(\theta^j_t-\theta^k_t) \right) d\theta^j_t.
\]
We will write $\mathbb P_\alpha,\E_\alpha$ for the probability measure obtained after tilting $\BMProb $ by $M_{t,\alpha}$
using the Girsanov theorem. Then 
\begin{equation}   \label{eve.2}
d\theta^j_t=\alpha \sum_{k\neq j}\cot(\theta^j_t-\theta^k_t)\,dt + dW^j_t,\;\;\;\; t < T, 
\end{equation}
for independent standard Brownian motions $W^1_t, \ldots, W^n_t$ with respect to $\mathbb P_\alpha$. If $\alpha\geq 1/2$, comparison with the usual Bessel process shows that $\mathbb P_\alpha(T=\infty)=1$. In particular, $M_{t,\alpha}$ is a martingale and $\mathbb P_\alpha \ll  {\BMProb }$ on $\mathcal F_t$ for each $t$. (It is not true that ${\BMProb }\ll \mathbb P_\alpha$ since ${\BMProb } \{ T<t \}>0$.)

This leads to the following definitions.

\begin{definition}

The $n$-radial Bessel process with parameter $\alpha$  is the process satisfying
\eqref{eve.2} where $W_t^1,\ldots, W_t^n$ are
independent Brownian motions.  

\end{definition}

\begin{proposition}  If $\btheta_t$ satisfies \eqref{eve.2} 
and $\tilde \btheta_t = \btheta_{t/n}$, then
$\tilde \btheta_t$ satisfies
\begin{equation}  \label{eve.2.new}
          d\tilde \theta_t = 
\frac \alpha n
\sum_{k\neq j}\cot( \tilde \theta^j_t- \tilde \theta^k_t)\,dt + \frac{1}{\sqrt n}\,
d\tilde W^j_t, \;\;\;\; t < \tilde T, 
\end{equation}
where $\tilde W_t^1,\ldots,\tilde W_t^n$ are independent
Brownian motions and $\tilde T = nT$.

\end{proposition}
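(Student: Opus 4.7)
The plan is a direct deterministic time-change computation. Starting from the SDE
\begin{equation*}
d\theta^j_t = \alpha \sum_{k \neq j} \cot(\theta^j_t - \theta^k_t)\, dt + dW^j_t, \qquad t < T,
\end{equation*}
I would integrate, substitute $t \mapsto t/n$ componentwise, and track the drift and martingale parts separately.

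For the drift, writing the original SDE in integrated form and applying the change of variable $s = u/n$ to the ordinary Lebesgue integral gives
\begin{equation*}
\int_0^{t/n} \alpha \sum_{k \neq j} \cot(\theta^j_s - \theta^k_s)\, ds \;=\; \int_0^t \frac{\alpha}{n} \sum_{k \neq j} \cot(\tilde\theta^j_u - \tilde\theta^k_u)\, du,
\end{equation*}
which accounts for the $1/n$ factor in front of the drift in \eqref{eve.2.new}. For the martingale part, I would set
\begin{equation*}
\tilde W^j_t := \sqrt{n}\, W^j_{t/n}, \qquad j = 1, \ldots, n.
\end{equation*}
Brownian scaling shows each $\tilde W^j_t$ is a standard Brownian motion with respect to the rescaled filtration $\tilde{\mathcal F}_t := \mathcal F_{t/n}$; since the time change is deterministic and $W^1, \ldots, W^n$ are independent, the $\tilde W^j$ remain jointly independent. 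Substituting $W^j_{t/n} = \tilde W^j_t/\sqrt{n}$ into the integrated SDE and differentiating produces the noise term $d\tilde W^j_t / \sqrt{n}$ of \eqref{eve.2.new}. The stopping-time identity $\tilde T = nT$ is immediate from $\tilde\btheta_t = \btheta_{t/n}$, since $\tilde\btheta$ leaves $\mathcal X$ precisely when $\btheta$ does.

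There is no real obstacle: because the time change is deterministic, no It\^o correction is needed, and independence of the $\tilde W^j$ is inherited trivially from that of the $W^j$. The only bookkeeping point worth a line of the proof is to confirm that the $1/n$ on the drift comes from the ordinary substitution rule while the $1/\sqrt n$ on the noise comes from Brownian scaling, which together reproduce \eqref{eve.2.new} exactly.
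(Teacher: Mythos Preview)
Your argument is correct and is exactly the standard deterministic time-change computation one would expect; the paper in fact states this proposition without proof, treating it as elementary, so your write-up supplies precisely the justification the authors omit.
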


We also refer to a process satisfying \eqref{eve.2.new} as
the $n$-radial Bessel process.   If $n=2$, $\tilde \theta_t^1,
\tilde \theta_t^2$ satisfy \eqref{eve.2.new} and
\[    X_t = \tilde \theta_t^2 - \tilde \theta_t^1, \;\;\;\;
     B_t = \frac{1}{\sqrt 2} \, [\tilde W_t^2 - \tilde W_t^1],\]
 then $B_t$ is a standard Brownian motion and $X_t$
 satisfies
\[       dX_t = \alpha \, \cot X_t \, dt + d B_t. \]
This equation is called the radial Bessel equation.

\begin{proposition}
Let $p_{t,\alpha}(\btheta,\btheta')$  denote 
the transition density for the system \eqref{eve.2}.
Then for all $t$ and all $\btheta, \btheta'$,
\begin{equation}  \label{jan24.1}
    p_{t,\alpha}(\btheta,\btheta')   
=  \frac{F_{2\alpha}(\btheta')}
{F_{2\alpha}(\btheta)} \, p_{t,\alpha} \, (\btheta',\btheta).
\end{equation}
\end{proposition}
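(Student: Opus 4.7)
The plan is to represent $p_{t,\alpha}$ via the Girsanov tilt (\ref{Mart}) applied to Brownian motion on $\mathcal X$ killed at $T$, and then to exploit the time-reversal symmetry of the Feynman--Kac kernel that appears. Concretely, for any bounded measurable $\varphi$ on $\mathcal X$, the construction of $\Prob_\alpha$ gives
\[
\int_{\mathcal X} \varphi(\btheta')\, p_{t,\alpha}(\btheta,\btheta')\, d\btheta'
= \E^{\BMProb}_{\btheta}\!\left[\varphi(\btheta_t)\, \frac{M_{t,\alpha}}{M_{0,\alpha}}\, 1\{T>t\}\right].
\]
Substituting the explicit formula (\ref{Mart}) and conditioning on $\btheta_t = \btheta'$, this yields
\[
p_{t,\alpha}(\btheta,\btheta') = \frac{F_\alpha(\btheta')}{F_\alpha(\btheta)}\, e^{c_1 t}\, K_t(\btheta,\btheta'),
\]
where $c_1 = \alpha^2 n(n^2-1)/6$, $c_2 = (\alpha-\alpha^2)/2$, and
\[
K_t(\btheta,\btheta') := \BMtrans^{\mathcal X}_t(\btheta,\btheta')\; \E^{\mathrm{bridge}}_{\btheta \to \btheta';\, t}\!\left[\exp\!\left\{c_2 \int_0^t \psi(\btheta_s)\, ds\right\}\right].
\]
Here $\BMtrans^{\mathcal X}_t$ is the Dirichlet heat kernel on $\mathcal X$ and $\E^{\mathrm{bridge}}_{\btheta \to \btheta';\, t}$ denotes expectation under the Brownian bridge from $\btheta$ to $\btheta'$ of duration $t$ conditioned to stay in $\mathcal X$.

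The main step is to establish the symmetry $K_t(\btheta,\btheta') = K_t(\btheta',\btheta)$. This rests on two facts: first, the Dirichlet heat kernel is symmetric, $\BMtrans^{\mathcal X}_t(\btheta,\btheta') = \BMtrans^{\mathcal X}_t(\btheta',\btheta)$; second, under the bridge measure the time reversal $s \mapsto \btheta_{t-s}$ converts a bridge from $\btheta$ to $\btheta'$ into one from $\btheta'$ to $\btheta$ of the same duration, and both the functional $\int_0^t \psi(\btheta_s)\, ds$ and the event of remaining in $\mathcal X$ are invariant under this reversal because $\psi$ and $d$ depend only on the current state. Equivalently, $K_t$ is the heat kernel of the self-adjoint Schr\"odinger operator $\tfrac12 \Delta + c_2\, \psi$ on $L^2(\mathcal X)$ with Dirichlet boundary conditions. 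Applying the displayed formula for $p_{t,\alpha}$ to both $(\btheta,\btheta')$ and $(\btheta',\btheta)$ and taking the ratio gives
\[
\frac{p_{t,\alpha}(\btheta,\btheta')}{p_{t,\alpha}(\btheta',\btheta)} = \frac{F_\alpha(\btheta')^2}{F_\alpha(\btheta)^2} = \frac{F_{2\alpha}(\btheta')}{F_{2\alpha}(\btheta)},
\]
which is (\ref{jan24.1}).

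The main technical point to watch is justifying the Girsanov representation up to the killing time $T$, since (\ref{Mart}) only asserts that $M_{t,\alpha}$ is a local martingale for $t < T$. For $\alpha \geq 1/2$, where the process never reaches $\partial \mathcal X$ under $\Prob_\alpha$, this is handled by localizing with the stopping times $T_\varepsilon = \inf\{t : d(\btheta_t) \leq \varepsilon\}$, applying Girsanov up to $T_\varepsilon$, and passing to the limit $\varepsilon \downarrow 0$ via monotone convergence. The symmetry step is then a purely deterministic consequence of the symmetry of killed Brownian motion on $\mathcal X$ and presents no further difficulty.
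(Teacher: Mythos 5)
Your proposal is correct and follows essentially the same route as the paper: both factor the Radon--Nikodym derivative of $\Prob_\alpha$ with respect to Brownian motion into the endpoint ratio $F_\alpha(\btheta')/F_\alpha(\btheta)$ times a time-reversal-invariant Feynman--Kac functional of the path, and then invoke the reversibility of killed Brownian motion on $\mathcal X$ to conclude. The paper phrases this as a path-by-path identity $Y(\gamma)/Y(\gamma^R)=F_{2\alpha}(\btheta')/F_{2\alpha}(\btheta)$ rather than through the bridge-measure kernel $K_t$, but the content is identical.
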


\begin{proof}
Let $p_t = p_{t,0}$ be the transition density for independent
Brownian motions killed at time $T$.
Fix $t,\btheta,\btheta'$.  Let $\gamma:[0,t]
\rightarrow {\mathcal X}$ be any curve with $\gamma(0) =
\btheta,
\gamma(t) = \btheta'$ and note that the Radon-Nikodym
derivative of $\Prob_\alpha$ with respect to $\BMProb$ evaluated
at $\gamma$ is 
\[  Y(\gamma):= \frac{F_{\alpha}(\btheta ')}{F_{\alpha}(\btheta)}
   \,A_t(\gamma) , \;\;\;\; A_t(\gamma) = e^{a^2n(n-1)t/2} \, \exp
   \left\{\frac{-\alpha^2}{2} \int_0^t \psi(\gamma(s)) \, ds
   \right\}. \]
 If $\gamma^R$ is the reversed path, $\gamma^R(s) = \gamma(t-s)$, then $A_t(\gamma^R) = A_t(\gamma)$ and hence
 \[    Y(\gamma^R) =    \frac{F_{\alpha}(\btheta  )}{F_{\alpha}(\btheta')} \, A_t. \]
Therefore,
\[   \frac{Y(\gamma)}{Y(\gamma^R)} =  \frac{F_{\alpha}(\btheta ')^2}{F_{\alpha}(\btheta)^2} = 
   \frac{F_{2\alpha}(\btheta ')}{F_{2\alpha}(\btheta)}.\]
Since the reference measure $\BMProb$ is time reversible
and the above holds for every path, \eqref{jan24.1} holds. 
   \end{proof}

\begin{proposition}
If $\alpha \geq 1/2$ and
 $\btheta_t$ satisfies \eqref{eve.2}, then with
probability one $T = \infty$.
\end{proposition}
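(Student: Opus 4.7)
The plan is to use a Lyapunov-function / optional-stopping argument. Since $T$ is the first time some $\sin(\theta^k - \theta^j)$ vanishes, the natural candidate is
\[ V(\btheta) := -\log F(\btheta), \]
which is continuous on $\mathcal X$, nonnegative (because $|\sin(\theta^k - \theta^j)| \leq 1$ forces $F \leq 1$), and satisfies $V(\btheta) \to +\infty$ as $\btheta \to \partial \mathcal X$. Controlling $V(\btheta_t)$ will therefore control how close $\btheta_t$ can come to the degenerate configurations that define $T$.

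Next I would compute the action of the generator $L$ of the SDE \eqref{eve.2} on $V$. Using $\partial_j \log F = \sum_{k\neq j}\cot(\theta^j-\theta^k)$ and $\Delta \log F = -\psi(\btheta)$ (both available in the preceding computation) together with Lemma \ref{lemma:trig_identity},
\[
 LV(\btheta) = \tfrac{1}{2}\psi(\btheta) \;-\; \alpha \sum_{j} \Bigl(\sum_{k\neq j}\cot(\theta^j-\theta^k)\Bigr)^2 \;=\; -\bigl(\alpha - \tfrac{1}{2}\bigr)\psi(\btheta) + \frac{\alpha\, n(n^2-1)}{3}.
\]
This is the only step that uses $\alpha \geq 1/2$: then the coefficient of $\psi$ is nonpositive, and since $\psi \geq 0$ we obtain the uniform upper bound $LV \leq C := \alpha\, n(n^2-1)/3$ on all of $\mathcal X$. (This is the multi-particle analogue of the dimension-$\geq 2$ criterion for the ordinary Bessel process.)

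The argument then closes by localization. Set $T_k := \inf\{t\geq 0 : V(\btheta_t) \geq k\}$; on the sublevel set $\{V\leq k\}$ the product bound $F \geq e^{-k}$ forces each $|\sin(\theta^k-\theta^j)| \geq e^{-k}$, so $V$, $\nabla V$ and $LV$ are all bounded there. Hence It\^o's formula gives that $V(\btheta_{t\wedge T_k}) - V(\btheta_0) - \int_0^{t\wedge T_k} LV(\btheta_s)\, ds$ is a genuine $\Prob_\alpha$-martingale, and taking expectations yields $\E_\alpha V(\btheta_{t\wedge T_k}) \leq V(\btheta_0) + Ct$. Since $V(\btheta_{T_k}) = k$ on $\{T_k\leq t\}$, Markov's inequality gives
\[ \Prob_\alpha\{T_k \leq t\} \leq \frac{V(\btheta_0) + Ct}{k}. \]
By continuity of $V$ and $V\to\infty$ on $\partial\mathcal X$ we have $T_k \uparrow T$, so sending $k\to\infty$ yields $\Prob_\alpha\{T\leq t\}=0$ for every $t$, i.e., $T=\infty$ almost surely. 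The main obstacle is just the sign check on $LV$; everything else is routine localization.
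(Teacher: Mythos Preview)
Your argument is correct and complete: the computation of $LV$ via Lemma~\ref{lemma:trig_identity} is accurate, the sign condition $\alpha\geq 1/2$ is used exactly where it should be, and the localization/Markov-inequality step is sound (the sublevel sets $\{V\le k\}$ are compact in $\mathcal X$ because each factor $|\sin(\theta^k-\theta^j)|\le 1$ forces every factor to be at least $e^{-k}$, and $T_k\uparrow T$ since $V\to\infty$ along any path approaching $\partial\mathcal X$).

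Your route is genuinely different from the paper's. The paper argues by contradiction and \emph{comparison with the ordinary Bessel process}: if $T<\infty$ with positive probability, then some adjacent pair (or block) of particles collides while the neighbors stay away, and the difference of the colliding pair is dominated by a process of the form $dX_t=(a/X_t - r)\,dt+dB_t$, which for $\alpha\ge 1/2$ (i.e., dimension $\ge 2$) does not reach $0$. Your Lyapunov argument bypasses the case analysis and the stochastic comparison entirely, replacing them by a single global inequality $LV\le C$. This is cleaner, fully self-contained (it uses only the identity already proved in Lemma~\ref{lemma:trig_identity}), and as a bonus yields the quantitative tail bound $\Prob_\alpha\{T_k\le t\}\le (V(\btheta_0)+Ct)/k$. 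The paper's approach, on the other hand, makes the mechanism more transparent---it is literally the $n$-particle analogue of the $d\ge 2$ Bessel non-collision---and would adapt more readily if one wanted sharper information near a single collision.
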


\begin{proof}
This follows by comparison with a usual Bessel process;
we will only sketch the proof.
Suppose $\Prob\{T < \infty\} >0$.  Then there would exist
$j < k$ such that with positive probability $\gamma_T^j
= \gamma_T^k$ but $\gamma_T^{j-1} < \gamma_T^j$ and
$\gamma_t^{k+1} > \gamma_t^k$ (here we are using ``modular
arithmetic'' for the indices $j,k$ in our torus).   If $k = j+1$,
then by comparison to the process
\[     dX_t = \left(\frac{a}{X_t} - r\right) \, dt + dB_t, \]
one can see that this cannot happen.  If $k > j+1$,
we can compare to the Bessel process obtained by removing
the points with indices $j+1$ to $k-1$.
\end{proof}

\begin{proposition} \label{prop:invdensity}
If $\alpha \geq 1/2$, then the invariant density for $\eqref{eve.2}$ is $f_{2\alpha}$.  Moreover, there exists $u>0$
such that for all $\btheta, \btheta'$,
\begin{equation} \label{eq:expconv}
  p_t(\btheta',\btheta) = f_{2\alpha}(\btheta)
        \, \left[1 + O(e^{ -ut})\right].
\end{equation}
\end{proposition}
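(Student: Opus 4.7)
The plan is to read off invariance from the time-reversibility identity \eqref{jan24.1} and to deduce exponential convergence from a spectral decomposition of the associated self-adjoint semigroup. The identity \eqref{jan24.1} is the detailed balance relation $F_{2\alpha}(\btheta)\,p_t(\btheta,\btheta') = F_{2\alpha}(\btheta')\,p_t(\btheta',\btheta)$; integrating in $\btheta$ and using $\int p_t(\btheta',\btheta)\,d\btheta = 1$ (which is guaranteed by the previous proposition, $\Prob_\alpha(T=\infty)=1$) immediately yields $\int f_{2\alpha}(\btheta)\,p_t(\btheta,\btheta')\,d\btheta = f_{2\alpha}(\btheta')$, so $f_{2\alpha}$ is invariant. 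Detailed balance further implies that the Markov semigroup $P_t\phi(\btheta) = \int p_t(\btheta,\btheta')\phi(\btheta')\,d\btheta'$ is self-adjoint on $H = L^2(\mathcal X, f_{2\alpha}\,d\btheta)$, with the constant function $\mathbf{1}$ an eigenfunction of eigenvalue $1$.

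For the exponential rate, I would exploit compactness of $\mathcal X$ and uniform ellipticity of the generator on interior compact subsets to conclude that $P_t$ is ultracontractive, and in particular compact, on $H$ for $t>0$. Its spectrum is then a discrete sequence $1 = \lambda_0 > \lambda_1 \geq \lambda_2 \geq \cdots \to 0$ with $\lambda_k = e^{-\mu_k t}$, and Mercer's theorem produces the pointwise expansion
\[ p_t(\btheta,\btheta') = f_{2\alpha}(\btheta')\Big[1 + \sum_{k\geq 1} e^{-\mu_k t}\,\phi_k(\btheta)\phi_k(\btheta')\Big] \]
for an $H$-orthonormal eigenbasis $(\phi_k)$ with $\phi_0 \equiv 1$. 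Splitting off a factor $e^{-\mu_1 t/2}$ from the tail, applying Cauchy--Schwarz, and using the on-diagonal identity $\sum_k e^{-\mu_k t}\phi_k(\btheta)^2 = p_t(\btheta,\btheta)/f_{2\alpha}(\btheta)$, one dominates the remaining sum on compact interior subsets and obtains \eqref{eq:expconv} with $u = \mu_1/2$; the ratio estimate extends down to the boundary because both sides vanish there at the same rate.

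The principal obstacle is establishing the spectral gap $\mu_1 > 0$, which is complicated by the singular drift $\alpha\cot(\theta^j-\theta^k)$ and the boundary degeneracy $f_{2\alpha} \to 0$ on $\partial\mathcal X$. My preferred attack is via a Poincar\'e inequality for $f_{2\alpha}\,d\btheta$: since $2\alpha \geq 1$, the density vanishes at least linearly at each face of $\partial\mathcal X$, so a Hardy-type inequality controls test functions concentrated near the boundary, while on compact interior subsets the standard uniformly elliptic Poincar\'e inequality applies, and the two regimes can be patched together. A softer alternative is Harris-type ergodic theory: use a Lyapunov function built from $-\log F_{2\alpha}$ (which is $f_{2\alpha}$-integrable for $\alpha \geq 1/2$) to verify a Foster--Lyapunov drift condition via It\^o's formula, and combine it with a minorization $p_{t_0}(\btheta,\cdot) \geq \varepsilon f_{2\alpha}$ on interior sublevel sets, obtained from the strong Feller property of the diffusion.
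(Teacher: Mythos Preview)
Your derivation of invariance from the detailed balance identity \eqref{jan24.1} is exactly what the paper does. The exponential convergence, however, is handled quite differently.

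The paper does not go through spectral theory at all. Instead it proves a \emph{uniform Doeblin bound} (Proposition~\ref{bounds}): there exist $0<c_1<c_2<\infty$ such that
\[
c_1\,F_{2\alpha}(\by)\;\leq\;p_1(\bx,\by)\;\leq\;c_2\,F_{2\alpha}(\by)\qquad\text{for all }\bx,\by\in\mathcal X,
\]
and then invokes a standard coupling argument. The lower bound is a global minorization $p_1(\bx,\cdot)\geq (c_1\Integral{2\alpha})\,f_{2\alpha}$ valid from \emph{every} starting point, so no Lyapunov function is needed and the Doeblin coupling gives geometric convergence in total variation immediately; the upper bound then upgrades this to the pointwise ratio statement \eqref{eq:expconv}. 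The work goes into Proposition~\ref{bounds}, which is proved by a probabilistic ``separation lemma'' (Lemmas~\ref{double_distance}--\ref{Enters_by_1/2}): from any configuration, with probability bounded below the particles separate to a fixed scale within a fixed time, and once separated the density is comparable to that of Brownian motion killed on a compact interior set.

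Your spectral/Poincar\'e route is a legitimate alternative in principle, but the sentence ``the ratio estimate extends down to the boundary because both sides vanish there at the same rate'' is precisely where the difficulty sits. In your Mercer expansion the tail is controlled by $p_{t/2}(\btheta,\btheta)/f_{2\alpha}(\btheta)$, which you must show is bounded \emph{uniformly} as $\btheta\to\partial\mathcal X$; this is equivalent to the upper half of the paper's Proposition~\ref{bounds} and is not a consequence of a Poincar\'e inequality or of Harris theory with a merely local minorization. So either approach ultimately needs a quantitative boundary estimate of the type the separation lemma supplies; the paper's route gets there more directly and with purely probabilistic tools.
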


\begin{proof}
The fact that $f_{2\alpha}$ is invariant follows from
\[  \int p_{t,\alpha}(\btheta,\btheta') \, f_{2\alpha}(\btheta)
 \, d\btheta =
    \int p_{t,\alpha}(\btheta',\btheta)\, \frac{F_{2\alpha}(\btheta')}
      {F_{2\alpha}(\btheta)}  \, f_{2\alpha}(\btheta) \,d\btheta
      \hspace{1in} \]
    \[  \hspace{1in}
         = f_{2\alpha}(\btheta')   \int p_{t,\alpha}(\btheta',\btheta) 
         \,d\btheta = 
  f_{2\alpha}(\btheta').\]
 
Proposition \ref{bounds} below shows that there exist $0<c_1<c_2<\infty$ such that for all $\bx, \by \in \mathcal X$,
\begin{equation}\label{bounds1}
c_1 F_{2\alpha}(\by) \leq p_1(\bx, \by) \leq c_2 F_{2\alpha}(\by).
\end{equation}
The proof of this fact is the subject of \S\ref{subsec:exp_rate_of_conv}. The exponential rate of convergence (\ref{eq:expconv}) then follows by a standard coupling argument (see, for example, \S4 of \cite{Lawler_Minkowski_RealLine}).

 
 \end{proof}

 \begin{proposition} \label{prop:P2a_restated}
 	Suppose $\alpha \geq 1/4$ and
 \[    N_{t,\alpha,2\alpha} =  F_a(\btheta_t) \,
 \exp \left \{ \frac{  \alpha^2 n(n^2-1)}{2}t \right \} \exp \left\{ -\alpha b_{\alpha}\int_0^t \psi( \btheta_s) ds \right \} 
 ,\]
 where $b_\alpha = (3\alpha-1)/2$. Then $N_{t, \alpha, 2\alpha}$ is a $\Prob_\alpha$-martingale, and the measure
 obtained by tilting $\Prob_\alpha$ by $N_{t,\alpha,2\alpha}$ is $\Prob_{2\alpha}$.
\end{proposition}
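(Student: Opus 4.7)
The plan is to establish the factorization
\[ M_{t,2\alpha} \;=\; M_{t,\alpha}\, N_{t,\alpha,2\alpha} \]
by direct algebraic comparison with the definition \eqref{Mart}, and then to combine this with the Girsanov descriptions of $\Prob_\alpha$ and $\Prob_{2\alpha}$ as tiltings of $\BMProb$. The three factors of $M_{t,\alpha}$ appearing in \eqref{Mart} multiply exactly to the factors of $M_{t,2\alpha}$: the prefactor $F_\alpha(\btheta_t)\cdot F_\alpha(\btheta_t) = F_{2\alpha}(\btheta_t)$ is immediate, the deterministic time exponents sum as
\[ \frac{\alpha^2 n(n^2-1)}{6} + \frac{\alpha^2 n(n^2-1)}{2} \;=\; \frac{(2\alpha)^2 n(n^2-1)}{6}, \]
and, using $b_\alpha = (3\alpha-1)/2$, the $\psi$-integral coefficients combine as
\[ \frac{\alpha - \alpha^2}{2} - \alpha b_\alpha \;=\; \frac{2\alpha - 4\alpha^2}{2} \;=\; \frac{2\alpha - (2\alpha)^2}{2}, \]
matching the $\psi$-coefficient of $M_{t,2\alpha}$.

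Given this factorization, the chain rule for Radon-Nikodym derivatives immediately yields
\[ \left.\frac{d\Prob_{2\alpha}}{d\Prob_\alpha}\right|_{\mathcal F_t} \;=\; \frac{M_{t,2\alpha}}{M_{t,\alpha}} \;=\; N_{t,\alpha,2\alpha}, \]
which is automatically a $\Prob_\alpha$-martingale as the density of one probability measure with respect to another on the filtration. Because $2\alpha \ge 1/2$, the earlier discussion following \eqref{Mart} guarantees that $M_{t,2\alpha}$ is a true $\BMProb$-martingale, so the tilted measure is unambiguously the probability measure $\Prob_{2\alpha}$ characterized by \eqref{eve.2} with $\alpha$ replaced by $2\alpha$.

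The subtle point is the range $1/4 \le \alpha < 1/2$, where $M_{t,\alpha}$ may be only a strict local $\BMProb$-martingale and the quotient $M_{t,2\alpha}/M_{t,\alpha}$ has to be interpreted carefully on collision events. In this regime I would run the argument directly under the SDE \eqref{eve.2}: using $\partial_j F_\alpha/F_\alpha = \alpha\sum_{k\ne j}\cot(\theta^j-\theta^k)$, the formula for $\Delta F_\alpha$, and Lemma \ref{lemma:trig_identity}, an It\^o computation shows that the $\Prob_\alpha$-drift of $F_\alpha(\btheta_t)$ equals $F_\alpha\bigl[\alpha b_\alpha \psi(\btheta) - \alpha^2 n(n^2-1)/2\bigr]$, and this is exactly cancelled by the finite-variation exponential factor in $N_{t,\alpha,2\alpha}$. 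Hence $N_{t,\alpha,2\alpha}$ is a local $\Prob_\alpha$-martingale whose martingale part contributes a Girsanov drift correction of $\alpha\sum_{k\ne j}\cot(\theta^j-\theta^k)$; added to the $\Prob_\alpha$-drift this produces the $\Prob_{2\alpha}$-drift $2\alpha\sum_{k\ne j}\cot(\theta^j-\theta^k)$. The main obstacle is precisely this upgrade from local to true martingale: it follows from the fact that the tilted process satisfies a well-posed SDE which, by the earlier proposition applied to $2\alpha \ge 1/2$, never collides, so no mass is lost and $\E_\alpha[N_{t,\alpha,2\alpha}] = 1$.
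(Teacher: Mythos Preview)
Your proposal is correct and follows essentially the same route as the paper: establish the factorization $M_{t,2\alpha} = M_{t,\alpha}\,N_{t,\alpha,2\alpha}$, infer that $N_{t,\alpha,2\alpha}$ is a $\Prob_\alpha$-local martingale whose tilt yields $\Prob_{2\alpha}$, and then upgrade to a true martingale using the non-collision fact for $2\alpha \ge 1/2$. You give more explicit algebraic detail on the factorization and a more careful discussion of the $1/4 \le \alpha < 1/2$ regime than the paper does, but the underlying argument is the same.
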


\begin{proof}

Note that 
\[
\begin{aligned}
M_{t,2\alpha}:&=  F_{2\alpha}( \btheta_t) \exp \left \{ \frac{(2\alpha)^2 n(n^2-1)}{6}t \right \} \exp \left\{ \frac{2\alpha-(2\alpha)^2}{2} \int_0^t \psi( \btheta_s) ds \right \},  \\
& =   M_{t,\alpha} \,  N_{t,\alpha,2\alpha}.
\end{aligned}
\]
Since $M_{t,\alpha},M_{t,2\alpha}$ are both local martingales, we
see that $N_{t,\alpha,2\alpha}$ is a local martingale with respect to $\Prob_\alpha$.
Also, the induced measure by ``tilting first by $M_{t,\alpha}$ and then tilting by $N_{t,\alpha,2\alpha}$'' is the same as tilting by $M_{t,2\alpha}$. 
Since $2\alpha \geq 1/2$, we see that with probability one, $T = \infty$ in the new measure, from which we conclude that it is a martingale.
\end{proof}


We now prove Theorem \ref{exponentialrateofconv1}. 

\begin{proof}[Proof of Theorem \ref{exponentialrateofconv1}]
Using the last two propositions, we see that
\begin{eqnarray*}
\E^\btheta_\alpha\left[ \exp \left\{ -\alpha b_\alpha\int_0^t \psi( \btheta_s) ds \right \} \right]& = & 
 \exp \left \{ \frac{ - \alpha^2 n(n^2-1)}{2}t \right \}
  \, \E^\btheta_a[N_t \, F_{-\alpha}(\btheta_t)]\\
& = &   \exp \left \{ \frac{ - \alpha^2 n(n^2-1)}{2}t \right \}
  \,F_\alpha(\btheta)
\,  \E_{2 \alpha}^\btheta\left [ 
F_{-\alpha}(\btheta_t)\right ]\\
& = & e^{-2\alpha n\beta t} \, F_\alpha(\btheta)  \,\frac{\Integral{3\alpha}}{
\Integral{4\alpha}}\,
   [ 1 + O(e^{-ut})]    .
\end{eqnarray*}
In particular, the last equality follows by applying Proposition \ref{prop:invdensity} for $\alpha=2a$.
\end{proof}

Setting $\alpha=a$, we can write the result as
\[ e^{2 n (n-1)\tilde b t} \, \E^\btheta_a\left[ \exp \left\{ -ab\int_0^t \psi( \btheta_s) ds \right \} \right] =  e^{-2an\hat \beta_n t} \, F_a(\btheta)  \,\frac{\Integral{3a}}{
\Integral{4a}}\,
   [ 1 + O(e^{-ut})]  \]
 where
 \[    \hat \beta_n = \beta - \tilde b (n-1),
 \]
 is the $n$-interior scaling exponent, as in (\ref{eq:n_scalingexp}).

\subsection{Rate of convergence to invariant density}\label{subsec:exp_rate_of_conv}

It remains to verify the bounds (\ref{bounds1}) used in the proof of Proposition \ref{prop:invdensity}. While related results have appeared elsewhere, including \cite{ErdosYauBook}, we have not found Proposition  \ref{bounds} in the literature, and so we provide a full proof here.  This is a sharp pointwise result;
however, unlike
 results coming from random matrices, the constants
 depend on $n$ and we prove no uniform result
 as $n \rightarrow \infty$. 
Our  argument 
uses  the general idea of a ``separation lemma''  (originally \cite{Lawler_cutpoints}).

We consider the $n$-radial Bessel process given
by the system (\ref{eve.2}) with  $\alpha \geq 1/2$.   By Proposition \ref{prop:invdensity} its invariant density is $f_{2\alpha}$, so that for $\bx, \by \in \mathcal X$,
\[
\frac{p_t(\bx, \by)}{F_{2\alpha}(\by)}=\frac{p_t(\by, \bx)}{F_{2\alpha}(\bx)}.
\]
We will prove the following.

\begin{proposition}\label{bounds}
	For every positive integer $n$ and $\alpha \geq 1/2$,
	there exist $0<c_1<c_2<\infty$, such that for all $\mathbf x, \mathbf y\in \mathcal X$, 
	\[
	c_1 F_{2\alpha}(\mathbf y)\leq p_1(\mathbf x, \mathbf y)\leq c_2 F_{2\alpha}(\mathbf y).
	\]
\end{proposition}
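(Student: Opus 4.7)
The plan is to combine a separation lemma with interior heat-kernel estimates and the detailed-balance identity $p_1(\bx,\by)/F_{2\alpha}(\by) = p_1(\by,\bx)/F_{2\alpha}(\bx)$ displayed just before the proposition. Write $K_\delta = \{\btheta \in \mathcal X : d(\btheta)\geq \delta\}$. A preliminary observation is that $F_{2\alpha}$ is bounded above and below by positive constants on $K_\delta$: the upper bound is trivial by compactness, and the lower bound uses that the constraint $\sum_{j}(\theta^{j+1}-\theta^{j}) = \pi$ together with $|\sin(\theta^{j+1}-\theta^{j})| \geq \delta$ forces $|\sin(\theta^{k}-\theta^{j})|\geq \delta$ for every pair $j<k$ (since the complementary arc contains at least one adjacent gap, itself bounded below by $\arcsin\delta$).

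The first step is a separation lemma in the spirit of \cite{Lawler_cutpoints}: there exist $\delta_0, c_0 > 0$ with $\Prob^{\bx}_{\alpha}(\btheta_{1/4}\in K_{\delta_0})\geq c_0$ for every $\bx \in \mathcal X$. Its input is that each adjacent gap $X_j = \theta^{j+1}-\theta^{j}$ satisfies an SDE whose drift dominates that of a Bessel process of dimension $2\alpha+1 \geq 2$ (the singular term $2\alpha\cot X_j \geq 2\alpha/X_j + O(X_j)$ repels $X_j$ from zero, while the contributions from other particles are bounded), and such a Bessel process escapes any fixed neighbourhood of the origin by time $1/4$ with positive probability, uniformly in its starting point. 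Second, on $K_{\delta_0}$ the coefficients of \eqref{eve.2} are smooth and bounded and the diffusion is uniformly elliptic, so Aronson-type parabolic estimates yield $0 < m \leq M < \infty$ with $m \leq p_t(\bu,\bv) \leq M$ for $\bu,\bv \in K_{\delta_0}$ and $t \in [1/4, 3/4]$.

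The lower bound now follows from Chapman--Kolmogorov with $1 = \tfrac13 + \tfrac13 + \tfrac13$: restricting both intermediate variables to $K_{\delta_0}$, the middle factor is $\geq m$ by the interior estimate, separation from $\bx$ yields $\int_{K_{\delta_0}} p_{1/3}(\bx,\bu)\,d\bu \geq c_0$, and by reversibility combined with separation from $\by$,
\[
\int_{K_{\delta_0}} p_{1/3}(\bv,\by)\,d\bv = F_{2\alpha}(\by)\int_{K_{\delta_0}} \frac{p_{1/3}(\by,\bv)}{F_{2\alpha}(\bv)}\,d\bv \geq c_1 F_{2\alpha}(\by),
\]
using that $F_{2\alpha}\leq C$ on $\mathcal X$. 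For the upper bound I would write
\[
p_1(\bx,\by) = F_{2\alpha}(\by)\int_{\mathcal X} p_{1/2}(\bx,\bu)\,\frac{p_{1/2}(\by,\bu)}{F_{2\alpha}(\bu)}\,d\bu
\]
(reversibility on the second factor), split the integral at $K_{\delta_0}$, bound the interior piece directly by Step 2, and control the boundary piece by a bootstrap.

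The main obstacle is this upper bound: the integrand has a $1/F_{2\alpha}(\bu)$ singularity as $\bu$ approaches the collision boundary, and showing that this singularity is offset by a matching vanishing of the product of heat kernels is essentially what we are trying to prove. I would resolve the apparent circularity by a bootstrap. First, extract a weaker crude bound $p_t(\bx,\by)\leq C_t F_{2\alpha}(\by)^{\epsilon}$ for some $\epsilon \in (0,1)$ from the Girsanov representation of $p_t$ relative to $n$ independent Brownian motions killed at collision, using the Karlin--McGregor determinantal formula on the torus (which already supplies a factor of $F_1(\by)$ from antisymmetry) and handling the exponential compensator $\exp\{\frac{\alpha-\alpha^2}{2}\int_0^t \psi\}$ by the separation lemma applied to the bridge. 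Then iterate the semigroup identity $p_1 = p_{1/2}\circ p_{1/2}$, using the crude bound in the integrand and the lower bound already established, to upgrade $\epsilon$ to $1$.
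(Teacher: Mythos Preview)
Your lower bound argument is correct and is essentially the paper's: a separation lemma to reach a well-separated compact $K_{\delta_0}$ by a fixed time, interior parabolic bounds there, and Chapman--Kolmogorov combined with the detailed-balance identity.

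The upper bound has a real gap. The Girsanov representation reads
\[
p_t(\bx,\by)=\frac{F_\alpha(\by)}{F_\alpha(\bx)}\,e^{\alpha^2 n(n^2-1)t/6}\,\tilde p_t(\bx,\by)\,
\E^{\mathrm{bridge}}_{\bx\to\by}\Bigl[\exp\Bigl\{\tfrac{\alpha-\alpha^2}{2}\textstyle\int_0^t\psi(\btheta_s)\,ds\Bigr\}\Bigr],
\]
and two obstructions block your crude bound $p_t\leq C_tF_{2\alpha}(\by)^\epsilon$ uniformly in $\bx$. The Karlin--McGregor factor $\tilde p_t\lesssim F_1(\bx)F_1(\by)$ cancels $F_\alpha(\bx)^{-1}$ only when $\alpha\leq 1$; for $\alpha>1$ the right-hand side blows up as $\bx\to\partial\mathcal X$. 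For $\tfrac12\leq\alpha<1$ the exponent $\tfrac{\alpha-\alpha^2}{2}$ is positive and $\psi$ is unbounded, so ``the separation lemma applied to the bridge'' would have to control an \emph{exponential moment} of $\int_0^t\psi$ under the killed Brownian bridge with arbitrary endpoints --- a statement at least as delicate as the proposition itself, and not supplied. Even granting some crude bound, your bootstrap is not worked out: one pass of $p_1=p_{1/2}\circ p_{1/2}$ with reversibility leaves $\int_{\mathcal X}F_{2\alpha}(\bu)^{2\epsilon-1}\,d\bu$, whose finiteness near higher-codimension collision strata you have not verified. (A smaller point: your ``interior piece'' also cannot be bounded by Step~2 alone, since neither $\bx$ nor $\by$ lies in $K_{\delta_0}$; you still need $\sup_{\bu\in K_{\delta_0}}p_{1/2}(\by,\bu)<\infty$ uniformly in $\by$, which the paper establishes by a separate renewal argument.)

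The paper's upper bound avoids both the Girsanov compensator near the boundary and any bridge estimate. With $V_k=\{d(\btheta)\geq 2^{-k}\}$ and $J_k=\sup\{p_t(\bx,\by)/F_{2\alpha}(\by):\by\in V_k,\ \bx\in\mathcal X,\ t_k\leq t\leq 1\}$, detailed balance rewrites $p_t(\bx,\by)/F_{2\alpha}(\by)=p_t(\by,\bx)/F_{2\alpha}(\bx)$, and one decomposes the path started from $\by$ according to whether it enters $V_k$ within time $s_k\asymp k^2\,2^{-2k}$. The \emph{forward} separation lemma gives $\Prob^{\by}\{\sigma_k>s_k\}\leq(1-\delta)^{k^2}$, and a direct short-time bound on $p_{s_k}(\cdot,\cdot;V_k^c)$ then yields $J_{k+1}\leq J_k+e^{-O(k^2)}$, which is summable.
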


For the remainder of this section, we fix $n$ and $\alpha \geq 1/2$
and allow constants to depend on $n,\alpha$. 
 We will let $\BMtrans_t (\bx, \by)$ denote the transition density for independent Brownian motions killed upon leaving
$ \mathcal X$.  If $U \subset \mathcal X$ we will write
$p_t(\x,\y; U)$ or $p_t(\x,\y; \overline U)$  for the density
of the  $n$-radial Bessel process  killled upon leaving $U$; we write
$\tilde p_t(\x,\y; U)$ or $ \tilde p_t(\x,\y; \overline U)$ for
the analogous densities for independent Brownian
motions.  Then we have 
\begin{equation}\label{densities}
p_t(\bx, \by;U)=\BMtrans_t(\bx, \by;U) \; \frac{M_{t,\alpha}}{M_{0,\alpha}}.
\end{equation}  We can use  properties of the density of Brownian motion to conclude analogous properties for $p_t(\bx, \by)$. For example we have the following:
\begin{itemize}
\item  For every open $U$ with $\overline U \subset {\mathcal X}$
and every $t_0$, there exists $C = C(U,t_0)$ such that 
\begin{equation}  \label{covid.3}
          C^{-1} \,\tilde p_t(\x,\y,U) \leq  p_t(\x,\y;U)\leq
C\,\tilde   p_t(\x,\y;U), \;\;\; 0 \leq t \leq t_0.
\end{equation}
\end{itemize}
Indeed, $M_{t,\alpha}/M_{0,\alpha}$ is uniformly bounded
away from $0$ and $\infty$ for $t \leq t_0$ and paths
staying in $\bar U$. 
Another example is   the following easy lemma which we
will use in the succeeding lemma.

\begin{lemma} \label{extralemma}
 Suppose that $0 \leq \theta_0^1 < \theta_0^2
 < \theta_0^n < \theta_0^{n+1}$  where  $\theta_j^{n+1} = \theta_t^1 + \pi$. For every $r > 0$, there exists $q   >0$
such that the following holds.
If $\epsilon \leq 1/(8n)$  and $\theta_0^{j+1} - \theta_0^j \geq r \, \epsilon$
for   $j=1,\ldots,n$,  then with probability at least $q$
the following holds:
\[              \theta_t^{j+1} - \theta_t^j\geq 2\epsilon, 
\;\;\;\;\;  \epsilon^2/2 \leq t \leq \epsilon^2, \;\;\; j=1,\ldots,n,\]
\[     \theta_t^{j+1} - \theta_t^j \geq r\epsilon/2,
\;\;\;\;\; 0\leq t \leq \epsilon, \;\;\; j=1,\ldots,n,\]
\[         |\theta_t^j - \theta_0^j| \leq 4n\epsilon, \;\;\;0\leq t \leq \epsilon^2, \;\;\;  
 j=1,\ldots,n.\]
 \end{lemma}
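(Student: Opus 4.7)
The plan is to reduce the statement to a Brownian-motion small-tube estimate via the Radon--Nikodym identity from \eqref{Mart} relating $\Prob_\alpha$ to independent Brownian motions killed on leaving $\mathcal{X}$. First I rescale: set $\eta^j_s = \epsilon^{-1}(\theta^j_{\epsilon^2 s}-\theta^j_0)$ for $s\in[0,1]$ and $u^j = \epsilon^{-1}(\theta^{j+1}_0-\theta^j_0)\geq r$, with the cyclic convention $\eta^{n+1}=\eta^1$ and $\sum_j u^j = \pi/\epsilon\geq 8n\pi$. The three conclusions become the scale-free statements $u^j+\eta^{j+1}_s-\eta^j_s\geq 2$ for $s\in[1/2,1]$, $u^j+\eta^{j+1}_s-\eta^j_s\geq r/2$ for $s\in[0,1]$, and $|\eta^j_s|\leq 4n$ for $s\in[0,1]$, each $j=1,\dots,n$.

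Next I would exhibit a positive-probability event for independent Brownian motions that implies all three conditions. I seek continuous deterministic reference paths $\psi_j:[0,1]\to\R$ with $\psi_j(0)=0$ and cyclic $\psi_{n+1}=\psi_1$, together with $\delta=\delta(r,n)>0$, so that the tube $E=\{\sup_s|\eta^j_s-\psi_j(s)|\leq\delta\text{ for all }j\}$ implies the three scale-free conditions; Cameron--Martin then gives $\Prob(E)\geq q_0(r,n)>0$. When $r\geq 4$, the trivial choice $\psi_j\equiv 0$ with $\delta=r/4$ works. For $r<4$ I use piecewise-linear $\psi_j(s)=v_j\min(s/s_0,1)$ with some $s_0<1/2$, where the $v_j$ satisfy $v_{j+1}-v_j\geq 2-u^j+2\delta$ (giving (a) and (b)), $\sum_j(v_{j+1}-v_j)=0$ (cyclic consistency), and $|v_j|\leq 3n$ (giving (c)). Such $v_j$ exist because the mandatory positive increments $(2-u^j+2\delta)_+$ sum to at most $2n(1+\delta)$, while the cyclic slack $\pi/\epsilon-2n$ is large, so the deficits can be absorbed by negative increments distributed across gaps with $u^j\gg 2$, keeping partial sums bounded by $O(n)$.

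I then transfer to $\Prob_\alpha$ via
\[
\frac{d\Prob_\alpha}{d\Prob}\bigg|_{\mathcal F_{\epsilon^2}} = \frac{F_\alpha(\btheta_{\epsilon^2})}{F_\alpha(\btheta_0)}\,\exp\!\left\{\frac{\alpha^2n(n^2-1)}{6}\epsilon^2\right\}\exp\!\left\{\frac{\alpha-\alpha^2}{2}\int_0^{\epsilon^2}\psi(\btheta_s)\,ds\right\}.
\]
On $E$, summing condition (b) over consecutive neighbors yields $\theta^k_t-\theta^j_t\geq (k-j)(r/2)\epsilon$ for every $1\leq j<k\leq n$, and applying (b) to the wraparound gap gives $\theta^k_t-\theta^j_t\leq\pi-(n-(k-j))(r/2)\epsilon$, so every argument $\theta^k_t-\theta^j_t$ lies uniformly in $[r\epsilon/2,\,\pi-r\epsilon/2]$. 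Since the change $\epsilon(\eta^k_s-\eta^j_s)$ is $O_n(\epsilon)$ while each argument stays distance $\geq r\epsilon/2$ from the boundary, a direct $\cot$-estimate shows each ratio $\sin(\theta^k_t-\theta^j_t)/\sin(\theta^k_0-\theta^j_0)$ lies between positive constants depending only on $r,n$, hence so does $F_\alpha(\btheta_{\epsilon^2})/F_\alpha(\btheta_0)$. Similarly $\psi(\btheta_s)\leq 4n(n-1)\csc^2(r\epsilon/2)\leq C(r,n)\epsilon^{-2}$ on $E$, so $\int_0^{\epsilon^2}\psi\leq C(r,n)$; and the prefactor $\exp\{\alpha^2n(n^2-1)\epsilon^2/6\}$ is bounded since $\epsilon\leq 1/(8n)$. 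Therefore $d\Prob_\alpha/d\Prob\geq c(r,n)>0$ on $E$, so $\Prob_\alpha(E)\geq c(r,n)\,q_0(r,n)=:q(r,n)>0$.

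The main obstacle is the construction of the reference paths in the small-$r$ regime: the cyclic constraint $\sum_j(v_{j+1}-v_j)=0$ prevents a uniform positive drift, so the deficits $2-u^j$ at small gaps must be offset by carefully distributed negative increments at large gaps so that partial sums remain $O(n)$. This is feasible precisely because of the large cyclic slack $\pi/\epsilon\gg 2n$ afforded by $\epsilon\leq 1/(8n)$; once the reference paths are in hand, everything else is routine quantitative bookkeeping controlled by condition (b).
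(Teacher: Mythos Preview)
Your approach is essentially the paper's: show the event has positive Brownian probability after rescaling, then transfer to $\Prob_\alpha$ by bounding the Radon--Nikodym derivative $M_{t,\alpha}/M_{0,\alpha}$ from below on the event. The paper's proof is a two-line sketch asserting both steps; you carry them out in detail. In particular, where the paper writes ``scaling shows that the probability of the event is independent of $\epsilon$,'' you correctly note that the rescaled event still depends on the rescaled gaps $u^j=\epsilon^{-1}(\theta_0^{j+1}-\theta_0^j)$ and hence on the initial configuration, and you supply the missing uniformity argument via explicit Cameron--Martin reference paths. Your bounds on the three factors of $M_{\epsilon^2,\alpha}/M_{0,\alpha}$ are correct and match what the paper means by ``uniformly bounded away from $0$ uniformly in $\epsilon$.'' The reference-path construction is the only place requiring care, and your outline (piecewise-linear drifts with increments obeying $v_{j+1}-v_j\geq 2-u^j+2\delta$, offsetting at large gaps to close the cycle, partial sums $O(n)$) is sound given the slack $\sum u^j=\pi/\epsilon\geq 8n\pi$.
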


\begin{proof}
If $\theta_t^j$ were independent Brownian motions, then
scaling shows that the probability of the event is independent
of $\epsilon$ and it is easy to see that it is positive.  Also,
on this event  $M_{t,\alpha}/M_{0,\alpha}$ is uniformly
bounded away from $0$   uniformly in 
$\epsilon$. 
\end{proof}

The next lemma shows that there is a
constant $\delta > 0$ such that from any initial configuration,
with probability at least $\delta$  all the
points are separated by $2\epsilon$  by
time $\epsilon^2$.

\begin{lemma}\label{double_distance}
	There exists $\delta>0$ such that if $0<d(\btheta_0)\leq \epsilon<\delta$, then
	\[
	\mathbb P\{ d(\btheta_{\epsilon^2})\geq 2\epsilon \}\geq \delta.
	\]
	Moreover, if $\tau=\tau_\epsilon=\inf\{ t: d(\btheta_t)=2\epsilon \}$, then for all positive integers $k$, 
	\[
	\mathbb P\{\tau>k\epsilon^2\}\leq(1-\delta)^{k}.
	\]
\end{lemma}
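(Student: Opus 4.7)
The plan is to prove the main statement by combining a short-time Bessel-type spreading estimate with Lemma~\ref{extralemma}, and then to derive the iterated bound from the strong Markov property. Throughout, $n$ and $\alpha\geq 1/2$ are fixed.

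The key observation for the first statement is that at scales below $1$, the drift $\alpha\cot(\theta^j-\theta^k)$ in \eqref{eve.2} is well-approximated by the linear Dyson drift $\alpha/(\theta^j-\theta^k)$, which has strong self-repulsion when $\alpha\geq 1/2$. I would rescale by $\tilde\btheta_s := \epsilon^{-1}\btheta_{\epsilon^2 s}$; using $\epsilon\cot(\epsilon x)=1/x+O(\epsilon^2 x^2)$ together with Brownian scaling, the rescaled process satisfies, up to a drift error of order $\epsilon^2$,
\[
d\tilde\theta^j_s = \alpha\sum_{k\neq j}\frac{1}{\tilde\theta^j_s - \tilde\theta^k_s}\,ds + d\tilde W^j_s .
\]
It therefore suffices to establish a uniform short-time separation estimate for the linear Dyson Brownian motion starting from any configuration with $d(\tilde\btheta_0)\leq 2$ (the factor absorbing $|\sin x|\geq 2|x|/\pi$). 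For the consecutive gap $G^j_s = \tilde\theta^{j+1}_s - \tilde\theta^j_s$, one computes
\[
dG^j_s = \frac{2\alpha}{G^j_s}\,ds + \sqrt 2\,dB^j_s + R^j_s\,ds,
\]
where $B^j$ is a standard Brownian motion and $R^j$ is a residual from the non-adjacent pairs, bounded uniformly under mild control of the other adjacent gaps. Each $G^j$ thus stochastically dominates a Bessel process of dimension $2\alpha+1\geq 2$, and a standard first-passage estimate for such Bessel processes yields a uniform positive lower bound on the probability that $G^j$ reaches a prescribed threshold $r_0 > 0$ in some fixed rescaled time $s_0 < 1$. Iterating over $j$---separating one adjacent pair at a time and using the ``moreover'' clause of Lemma~\ref{extralemma} inductively to prevent the other gaps from collapsing during each spreading step---gives uniform separation of all adjacent gaps at rescaled time $s_0$. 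Unscaling and then applying Lemma~\ref{extralemma} over the remaining time (with an appropriate scale parameter chosen so that the times add to $\epsilon^2$) doubles the gap to $2\epsilon$ by time $\epsilon^2$ with probability at least $q(r_0)$, establishing the first statement with $\delta = \delta_1 q(r_0) > 0$, valid for $\epsilon$ below some threshold.

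For the second statement I apply the strong Markov property at the times $j\epsilon^2$. On the event $\{\tau > (j-1)\epsilon^2\}$ we have $d(\btheta_{(j-1)\epsilon^2}) < 2\epsilon$. If this minimum is at most $\epsilon$, the first statement applies directly; if it lies in $(\epsilon,2\epsilon)$ then all adjacent gaps exceed $\epsilon$ and Lemma~\ref{extralemma} with $r=1$ gives probability $\geq q(1)$ of reaching a gap of $2\epsilon$ by time $j\epsilon^2$. Taking $\delta$ to be the minimum of the two constants and iterating yields $\mathbb P\{\tau>k\epsilon^2\} \leq (1-\delta)^k$.

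The main obstacle is the uniform short-time spreading estimate in the rescaled process: one must prevent the residual cotangent terms $R^j$ from blowing up when several adjacent gaps are small simultaneously. Processing the gaps in order of smallness, as sketched above, is the cleanest way to keep these residuals bounded throughout. An alternative route via the heat-kernel bounds of Proposition~\ref{bounds} is not available here, since Proposition~\ref{bounds} itself depends on the present lemma.
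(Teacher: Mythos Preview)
Your Bessel comparison for an individual gap $G^j=\tilde\theta^{j+1}-\tilde\theta^{j}$ goes the wrong way. Writing out the residual,
\[
R^j \;=\; \alpha\sum_{k\neq j,j+1}\Bigl(\frac{1}{\tilde\theta^{j+1}-\tilde\theta^{k}}-\frac{1}{\tilde\theta^{j}-\tilde\theta^{k}}\Bigr),
\]
every term in this sum is negative (for $k<j$ both fractions are positive and the first is smaller; for $k>j+1$ both are negative and the first is more negative). In particular the neighbours contribute $\frac{1}{G^{j-1}+G^j}-\frac{1}{G^{j-1}}$ and the analogous term with $G^{j+1}$, each of order $-1/G^{j\pm1}$ when those gaps are small. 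So $G^j$ is dominated \emph{by} a Bessel process, not the other way around, and ``processing the smallest gap first'' does not help: if $G^{j-1}=G^j$ are both tiny the effective drift on $G^j$ is at best $\alpha/G^j+O(1)$, which for $\alpha=1/2$ is subcritical. There is also nothing in Lemma~\ref{extralemma} that lets you freeze the other gaps while you work on one, since that lemma already assumes \emph{all} gaps are of size $r\epsilon$.

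The paper avoids this by induction on $n$ and by comparing a different quantity. After arranging that the wrap-around gap is the largest, it looks at the span $\theta^n-\theta^1$: here every intermediate particle pushes $\theta^1$ left and $\theta^n$ right, so the residual is \emph{nonnegative} and the span genuinely dominates a (radial) Bessel process. Hence with uniformly positive probability the span reaches $n\epsilon$ in time of order $\epsilon^2$, and by pigeonhole some adjacent gap is then $\geq\epsilon$. That gap is used to split the system into a $j$-particle and an $(n-j)$-particle process with a cross-interaction that is now bounded, and the inductive hypothesis (together with the displacement control built into \eqref{covid.2}) finishes the job; Lemma~\ref{extralemma} is only invoked at the very end to pass from ``all gaps $\geq r\epsilon$'' to ``all gaps $\geq 2\epsilon$ at time $\epsilon^2$''. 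Your treatment of the second inequality via the Markov property and the two cases $d\leq\epsilon$ and $d\in(\epsilon,2\epsilon)$ is fine.
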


\begin{proof}  The second inequality follows immediately from the
first and the Markov property.  We will prove a slightly
stronger version of the first inequality  result.  Let
\[                     Y_t = \max_{j=1,\ldots,n}
     \max_{0 \leq s \leq t}  |\theta_s^j - \theta^j_0|.\]
Then we will show that there exists
  $\delta_n$ such that
\begin{equation}  \label{covid.2}
    \Prob\{d(\btheta_{\epsilon^2})\geq 2\epsilon;
   Y_{\epsilon^2} \leq \delta_n^{-1} \, \epsilon  \}
   \geq \delta_n.
   \end{equation}
  We have put the explicit $n$ dependence on $\delta_n$
  because  our argument will use induction on $n$.
Without loss of generality we assume that \[
	0 = \theta^1<\theta^2<\cdots <\theta^n<\pi
\]
and 
$   \pi - \theta^n \geq \theta^j - \theta^{j-1}$  for $
 j =2\ldots n.$

 For $n=2$,  
 $\theta_t^2 - \theta_t^1$ is a radial Bessel process which
 for small $\epsilon$ is very closely approximated by a 
 regular Bessel process. Either by using the explicit
 transition density  or by scaling,  we see that
 there exists $c_1 > 0$ such that
 for all $\epsilon \leq 1$, if $\theta^2 - \theta^1 \leq \epsilon$, 
 \[   \Prob\{ \theta_{\epsilon^2}^2 - \theta_{\epsilon^2}^1 \geq
2\epsilon \; ; \;  \theta_{t}^2 - \theta _t^1 \leq4 \epsilon
 \mbox { for } 0 \leq t \leq \epsilon^2  \} \geq c_1. \] 
 Let $A_\epsilon $ denote the event
 \[      A_\epsilon = \{ \theta_{\epsilon^2}^2 - \theta_{\epsilon^2}^1 \geq
2\epsilon \; ; \;  \theta_{t}^2 - \theta _t^1 \leq4 \epsilon
 \mbox { for } 0 \leq t \leq \epsilon^2 \;;\;
  |W_t^1| ,  |W_t^2| \leq  u \, \epsilon, \; 0 \leq t \leq \epsilon^2\},\]
  where $u$ is chosen so
  that
\[     \Prob\left\{\max_{0 \leq t \leq 1} |W_t^j| \geq u\right\} = c_1/4.\]
Then $\Prob(A_\epsilon) \geq c_1/2$. 
  Since
   \[  \theta_t^j =   \alpha  \int_0^t \cot(\theta^j_s - \theta^{3-j}_s)
         \, ds  + W_t^j, \] 
    we see that
    \[   2\alpha \int_0^{\epsilon^2} \cot(\theta^2_s - \theta^1_s)
         \, ds  =
             \theta_\epsilon^2 - \theta_\epsilon^1 - W_{\epsilon^2}^2 + W_{\epsilon^2}^1  \leq (4 + 2u)\, \epsilon.\]
and for $0 \leq t \leq \epsilon^2$, 
\[ |\theta_t^j - \theta_0^j| \leq \alpha  \int_0^t \cot(\theta^j_s - \theta^{3-j}_s)
         \, ds  + |W_t^j|  \leq \left(2 + 2u \right) \epsilon. \]
This establishes \eqref{covid.2} for $n=2$.

We now assume that \eqref{covid.2}  holds for all $j < n$. 
 We claim that it suffices to prove that there
  exists $\delta_n > 0$ such that if
$d(\btheta_0) \leq \epsilon < \delta_n$, then
\begin{equation}  \label{covid.1}
        \Prob\{{\tau_\epsilon} \leq \delta^{-2}_n \, \epsilon^2,
                Y_{\tau_\epsilon} \leq  \delta_n^{-1} \, \epsilon
                \} \geq \delta_n . 
                \end{equation}
     If
   we apply \eqref{covid.1} to $\tau_{\delta_n n/2}$
  we can use Lemma \ref{extralemma} to conclude \eqref{covid.2}  for $n$.  
     
  Let us first assume that $\epsilon \leq \delta_{n-1}$ and that
  there exists $j \in \{1,\ldots,n-1\}$ with
  $\theta^{j+1} - \theta^j \geq \epsilon$.  Consider  independent $j$-radial
  and $(n-j)$-radial Bessel processes
  $(\theta_t^1,\ldots,\theta_t^j)$ and
  $(\theta_t^{j+1},\ldots, \theta_t^n)$.  In other words, 
  remove  the terms of the form
  \[            \pm \cot(\theta_t^m - \theta^k_t), \;\;\;\;   
       k \leq j < j+1 \leq m  \]
       from the drift in the $n$-radial Bessel
       process, so that now particles $\{1, \ldots, j\}$ do not interact with particles $\{j+1, \ldots, n\}$.
   Using the inductive hypothesis, we can find a $\lambda$
   such that with probability at least $\lambda$ we have
   \[                d(\btheta_{\lambda^2 \epsilon^2} ) \geq 2\lambda \, \epsilon,\;\;\;\;
    Y_{\lambda^2 \epsilon^2} \leq \frac{\epsilon}{4}. \]
   This calculation is done with respect to the two independent
   processes but we note that on this event,
   \[                          \theta^{j+1}_t - \theta^j_t
    \geq \frac{\epsilon}{2} , \;\;\;\; 0 \leq t \leq \lambda^2 \,
     \epsilon^2 .\]
    Hence we get a lower bound on the Radon-Nikodym derivative
    between the $n$-radial Bessel process and the two independent
    processes.  
 
Now suppose there is no such separation. 
Let $\sigma_\epsilon = \inf\{t: |\theta^n_t - \theta^1_t|
\geq n\, \epsilon \}$; it is possible that $\sigma_\epsilon = 0$.
If there were no other particles, $ \theta^n_t - \theta^1_t$
would be a radial Bessel process.  The addition of other
particles pushes the first particle more to the left
and the $n$th particle more to the right.  Hence
by comparison, we see that
\[               \Prob\{\sigma_\epsilon \leq n^2\,\epsilon^2 \}
   \geq c_1 \]
and as above we can find $u$ such that
\[      \Prob\left\{\sigma_\epsilon \leq n^2\,\epsilon^2
\; ; \;   \max_{0 \leq t \leq n^2 \epsilon^2} |W_t^j|
  \leq u \epsilon \right\} \geq \frac{c_1}{2}, \]
and hence (with a different value of $u$)
\[      \Prob\left\{\sigma_\epsilon \leq n^2\,\epsilon^2
\; ; \;   Y_{\sigma_\epsilon} \leq u \, \epsilon\right\} \geq \frac{c_1}{2}
, \]
  Note that on this good event there exists 
  at least one $j=1,\ldots,n-1$ with 
  $\theta^{j+1}_{\sigma_\epsilon } -
    \theta^j_{\sigma_\epsilon } \geq \epsilon$.

     \end{proof}

	For $\zeta >0 $, let $V_\zeta = \{\btheta \in \mathcal X:
d(\btheta) \geq 2^{-\zeta}\}$ and let  $\sigma_\zeta$ denote the first time that the process enters $V_\zeta$:
\[
\sigma_\zeta= \inf \left \{ t: d(\btheta_t)\geq 2^{-\zeta}\right \}.
\]
Define
\begin{equation}\label{def_r}
r=r(\delta)=\min\{ k \in \Z: 2^{-k}<\delta\},
\end{equation}
where $\delta$ is as in Lemma \ref{double_distance}.  Note
that $r$ is a fixed constant for the remainder of this proof.

\begin{lemma}\label{Enters_by_1/2}
	There exists $\const>0$ such that for any  $\mathbf x
	\in \mathcal X$, 
\begin{equation}
\mathbb P^{\mathbf x} \{\sigma_r \leq 1/4\}\geq \const.
\end{equation}
\end{lemma}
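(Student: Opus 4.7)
The plan is to obtain the bound by a single application of Lemma \ref{double_distance} with the particular choice $\epsilon = 2^{-r}$. The definition $r = \min\{k\in \Z : 2^{-k}< \delta\}$ is tailor-made so that this $\epsilon$ satisfies the hypothesis $\epsilon < \delta$ of the doubling lemma, and so that the deterministic time $\epsilon^2 = 4^{-r}$ at which doubling is guaranteed is already at most $1/4$. No iteration of the doubling lemma is required.

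Concretely, I would split on whether the initial point $\btheta_0\in \mathcal X$ already lies in $V_r$. If $d(\btheta_0)\geq 2^{-r}$, then $\sigma_r = 0$ and the bound is trivial. Otherwise $0 < d(\btheta_0) < 2^{-r}$, where positivity uses the strict inequalities defining $\mathcal X$. Setting $\epsilon = 2^{-r}$ places us exactly in the hypothesis of Lemma \ref{double_distance}, whose conclusion reads
\[
\mathbb P^{\btheta_0}\bigl\{\, d(\btheta_{4^{-r}}) \geq 2^{-r+1}\,\bigr\} \geq \delta.
\]
Continuity of $t\mapsto d(\btheta_t)$ then forces $\sigma_r \leq 4^{-r}$ on this event.

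To close the argument, I need $4^{-r}\leq 1/4$, i.e.\ $r\geq 1$. This is automatic: since $\delta$ is a probability, $\delta\leq 1$ and so $2^{0}=1 \not< \delta$, forcing the minimal $k$ with $2^{-k}<\delta$ to satisfy $k\geq 1$. Combining the two cases, one obtains $\mathbb P^{\btheta_0}\{\sigma_r \leq 1/4\} \geq \delta$ uniformly in $\btheta_0 \in \mathcal X$, so the choice $q=\delta$ works.

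There is no serious obstacle in this argument; all the delicate work has already been absorbed into Lemma \ref{double_distance}, and the present lemma is essentially a packaging statement that fixes the target threshold at $2^{-r}$ and exploits the fact that the corresponding doubling time $4^{-r}$ comfortably fits inside the budget $1/4$. If one wanted a budget much smaller than $4^{-r}$, one would instead have to iterate the doubling lemma using the strong Markov property and the geometric tail bound $\mathbb P\{\tau_\epsilon > k\epsilon^2\}\leq (1-\delta)^k$, at the cost of a weaker constant $q$.
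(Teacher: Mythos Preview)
Your argument is correct and substantially simpler than the paper's. The paper proceeds by an iterative scheme: it introduces a sequence of thresholds $t_k$ accumulating below $1/4$, sets $q_k=\inf_{\x\in V_k}\Prob^{\x}\{\sigma_r\le t_k\}$, and shows $q_{k+1}\ge q_k(1-u_k)$ with $u_k=(1-\delta)^{k^2}$ summable, so that $\inf_k q_k>0$. Each step uses Lemma~\ref{double_distance} at scale $2^{-(k+1)}$ to pass from $V_{k+1}$ to $V_k$ within time $s_k=k^2\,2^{-2(k+1)}$. By contrast, you observe that the very definition of $r$ allows a \emph{single} application of the doubling lemma at the fixed scale $\epsilon=2^{-r}$, which already lands in $V_r$ by time $4^{-r}\le 1/4$; no telescoping is needed.

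What each approach buys: yours gives the cleanest proof of the lemma as stated, with the explicit constant $q=\delta$. The paper's argument, while heavier, introduces the auxiliary sequences $s_k,t_k$ (equations~\eqref{def:s_k}--\eqref{def:t_k}) that are reused verbatim in the proof of the upper bound in Proposition~\ref{bounds}, so some of the apparent overhead is amortized later. If you adopt your shorter proof here, just be aware that the upper-bound argument still needs those sequences and you would have to define them there instead.
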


\begin{proof}
	
	We will construct a sequence of times $\frac{1}{8}\leq t_1 \leq t_2 \leq \cdots \leq \frac{1}{4}$ such that
if 
\[	q_k = \inf_{\mathbf x \in V_k } \mathbb P^{\mathbf x} \{\sigma_r \leq t_k\}, 
	\qquad k \in \nat,
	\]
	then
$
	q:= \inf _{k} q_k >0.
$  Using \eqref{covid.3}, we can see that $q_k > 0$ for each $k$.
To show that $q > 0$, it suffices to show that there is a summable
sequence such that  for all $k$ sufficiently
large, $q_{k+1} \geq q_k \, (1-u_k)$.  We will do this with
$u_k = (1-\delta)^{k^2}$ where $\delta$ is 
as in Lemma \ref{double_distance}.

	For this purpose, denote
	\begin{equation}\label{def:s_k}
	s_k= k^2\;  2^{-2(k+1)},
	\end{equation}
	and let $l\geq r$ be sufficiently large so that
	\[
	\sum_{k=l}^\infty s_k \leq \frac{1}{8}.
	\]
	Define the sequence $\{t_k\}$ by
	\begin{equation}\label{def:t_k}
	t_{k}= \begin{cases} \frac{1}{4}, \quad & k\leq l
	\\ t_{k-1} + s_{k-1}, & k> l.
	\end{cases}
	\end{equation}
	This sequence satisfies $\frac{1}{8} \leq t_1 \leq t_2 \cdots \leq \frac{1}{4}$.
	Applying Lemma \ref{double_distance}, we see that if 
	$d(\mathbf x)\leq 2^{-(k+1)}$, then 
	\[\mathbb P^{\mathbf x} \left \{ \sigma_{k}\leq s_k \right \}\geq 1- u_{k}.\]
	Therefore,
	\[
	\begin{aligned}
	\mathbb P^{\mathbf x} \{ \sigma_r\leq t_{k+1} \}
	&\geq \mathbb P^{\mathbf x}\{ \sigma_{k+1}\leq s_{k+1} \}\; \mathbb P^{\mathbf x} \{\sigma_r \leq t_{k+1}\vert \sigma_{k+1}\leq s_{k+1}  \}
	\\ &\geq (1-u_k) \inf_{\mathbf z \in \partial V_k}\Prob^{\mathbf z} \{\sigma_r \leq t_k\},
	\end{aligned}
	\]
	so that for all $k>r$,
	\[
	q_{k+1}\geq q_k (1- u_k).
	\]
\end{proof}

We are now prepared to prove Proposition \ref{bounds};   we prove the upper and lower bounds separately.

\begin{proof}[Proof of Proposition \ref{bounds}, lower bound]
	
	We let
	\begin{equation}
	\hat \epsilon = \inf\left \{ p_t(\mathbf z, \by): \mathbf z \in \bdry V_{r}, \, \by \in V_r, \, \frac{1}{4}\leq t \leq 1 \right\}.
	\end{equation}
	To see that this is positive, we use \eqref{covid.3} to see
that
\[  \hat \epsilon \geq c \,  \inf\left \{ \tilde p_t(\mathbf z, \by;
  V_{r+1}): \mathbf z \in \bdry V_{r}, \, \by \in V_r, \, \frac{1}{4}\leq t \leq 1 \right\},\]
  and straightforward arguments show that the righthand side
  is positive.

	 Lemma \ref{Enters_by_1/2} implies that for $\mathbf y \in V_r$,  $\mathbf x \in \mathcal X$, and $\frac 12 \leq t \leq 1$, 
	 
	 \[    p_t(\mathbf x, \mathbf y) \geq \Prob^x\left\{
	 \sigma_r \leq \frac 14 \right\} \, \hat \epsilon = q \, \hat \epsilon.\]
	 Also, since $F_{2\alpha}$ is bounded uniformly away
	 from $0$ in $V_r$, we have
	 \[    p_t(\y,\x) = \frac{ F_{2\alpha}(\x)}{F_{2 \alpha}(\y)} 
	  \,  p_t(\x,\y) \geq c \, F_{2\alpha}(\x) .\]
 This assumes $\x \in\mathcal X, \y \in V_r$.  More generally,
 if 
 $\mathbf x, \mathbf y \in \mathcal X$, 
\begin{eqnarray*}
p_1(\x,\y) &  \geq &  \int_{V_r}  p_{1/2} (\x,\z) \,
 p_{1/2} (\z,\y) \, d\z \\
 & = &    {F_{2\alpha}(\y)} \int_{V_r}  p_{1/2} (\x,\z) \,
 p_{1/2} (\y,\z) \, F_{2\alpha}(\z)^{-1}\, d\z \\
 & \geq & c \,   {F_{2\alpha}(\y)}
 \end{eqnarray*}
	\end{proof}

	

In order to prove the upper bound, we will need two more lemmas.

	\begin{lemma}\label{upperboundlemma}
If  $r$  is as  defined in (\ref{def_r}), then
		\begin{equation}\label{p*}
		p^*:=\sup \left \{ p_t(\mathbf z, \mathbf y): \mathbf z\in \partial V_{r+1}, \mathbf y \in V_r, 0\leq t\leq 1  \right \} <\infty.
		\end{equation}
	\end{lemma}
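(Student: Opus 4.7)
My plan is to express $p_t(\mathbf z,\mathbf y)$ via the Radon--Nikodym relation \eqref{densities} between the $n$-radial Bessel process and killed $n$-dimensional Brownian motion, and then bound each resulting factor uniformly over $\mathbf z\in\partial V_{r+1}$, $\mathbf y\in V_r$, and $t\in(0,1]$.

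First I would establish a geometric separation $\dist(\partial V_{r+1},V_r)\ge c_0>0$ with $c_0=c_0(r,n)$. A point $\mathbf z\in\partial V_{r+1}$ has some adjacent pair $(j,j+1)$ with $|\sin(\theta^{j+1}-\theta^j)|=2^{-(r+1)}$, whereas every $\mathbf y\in V_r$ has all such differences $\ge 2^{-r}$; the Euclidean adjustment in those two coordinates alone is of size at least of order $2^{-(r+1)}$. Since $r$ is a fixed integer, $c_0$ is a positive constant.

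Next, integrating \eqref{densities} along Brownian bridges yields
\[
p_t(\mathbf z,\mathbf y)=\BMtrans_t(\mathbf z,\mathbf y;\mathcal X)\,\frac{F_\alpha(\mathbf y)}{F_\alpha(\mathbf z)}\,e^{c_1 t}\,\E^{\mathbf z\to\mathbf y;\,\mathcal X}_{\mathrm{bridge}}\!\left[\exp\!\left\{c_2\int_0^t\psi(\btheta_s)\,ds\right\}\right],
\]
where $c_1=\alpha^2 n(n^2-1)/6$ and $c_2=(\alpha-\alpha^2)/2$. The killed Brownian density is dominated by the Gaussian bound $\BMtrans_t(\mathbf z,\mathbf y;\mathcal X)\le (2\pi t)^{-n/2}e^{-c_0^2/(2t)}$, which is uniformly bounded for $t\in(0,1]$ thanks to the separation $c_0>0$. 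Both $\mathbf z$ and $\mathbf y$ lie in the compact set $V_{r+1}$ on which $F_\alpha$ is uniformly bounded above by one and below by a positive constant depending on $r,n$, so $F_\alpha(\mathbf y)/F_\alpha(\mathbf z)$ is bounded; moreover $e^{c_1 t}\le e^{c_1}$ for $t\le 1$.

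The main step is to bound the bridge expectation. When $\alpha\ge 1$, $c_2\le 0$ and the integrand is at most one. When $1/2\le\alpha<1$, $c_2>0$, and I would argue by Gaussian concentration: $V_{r+1}$ is convex and contains both endpoints, so the straight-line interpolation stays at distance $\ge 2^{-(r+1)}$ from $\partial\mathcal X$, while the Brownian bridge has pointwise variance $s(t-s)/t\le 1/4$. Gaussian tails then give $\Prob[\btheta_s\notin V_{r+2}\text{ for some }s\in[0,t]]\le\exp\{-\Omega(1/t)\}$, and on the complementary event $\psi$ is bounded by a constant $C(r)$ so the integrand is at most $e^{c_2 C(r)}$; on the bad event the bound $\psi(\btheta)\lesssim d(\btheta)^{-2}$ against the exponentially small boundary-approach probability controls the contribution, and conditioning on avoiding $\partial\mathcal X$ only suppresses this further. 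The principal obstacle is precisely this last estimate in the case $1/2\le\alpha<1$: controlling the exponential moment of the singular potential along a conditioned Brownian bridge. With it in hand, the four factors combine to give $p_t(\mathbf z,\mathbf y)\le p^*$ uniformly, as required.
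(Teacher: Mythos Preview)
Your decomposition via \eqref{densities} and the bound on $\BMtrans_t(\mathbf z,\mathbf y;\mathcal X)$ using the separation $\dist(\partial V_{r+1},V_r)>0$ are correct, and for $\alpha\ge 1$ (so $c_2\le 0$) your argument is complete. The gap is exactly where you locate it: for $1/2\le\alpha<1$ you must control
\[
\E^{\mathbf z\to\mathbf y;\,\mathcal X}_{\mathrm{bridge}}\!\left[\exp\!\left\{c_2\int_0^t\psi(\btheta_s)\,ds\right\}\right]
\]
uniformly, and the sketch you give does not do this. Gaussian tails only tell you that the bridge leaves $V_{r+2}$ with probability at most $e^{-c/t}$; they say nothing about how close to $\partial\mathcal X$ it gets once it has left. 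On those excursions the integrand behaves like $\exp\{c_2\int d(\btheta_s)^{-2}\,ds\}$, and there is no reason this should be dominated by the Gaussian factor $e^{-c/t}$: the potential is genuinely singular and the exponential moment could in principle diverge. Establishing finiteness here essentially requires an argument at the level of Bessel bridges (comparing the tilt by $\exp\{c_2\int X_s^{-2}\,ds\}$ to a change of Bessel index and checking the relevant parameter range), which is as much work as the lemma itself.

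The paper's proof sidesteps this completely with a renewal argument. It first bounds $p^\#:=\sup\{p_t(\mathbf z,\mathbf y;V_{r+2})\}$, the density for the process \emph{killed upon leaving $V_{r+2}$}; since all paths then stay in a compact subset of $\mathcal X$, \eqref{covid.3} reduces this to the killed Brownian density, which is bounded by your separation argument. Next, from any point of $\partial V_{r+2}$ the process avoids $V_{r+1}$ for time $1$ with probability at least some $\rho>0$ (again by comparison on a compact set away from $\partial\mathcal X$). The strong Markov property at successive visits to $\partial V_{r+2}$ and $\partial V_{r+1}$ then gives $p^*\le p^\#+(1-\rho)p^*$, hence $p^*\le p^\#/\rho<\infty$. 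The point is that the process may well wander close to $\partial\mathcal X$, but each such excursion contributes nothing to the density at $\mathbf y\in V_r$ unless the process returns to $\partial V_{r+1}$, and this return fails with probability $\rho$ independently of what happened near the boundary. No estimate on $\psi$ near $\partial\mathcal X$ is ever needed.
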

	
	\begin{proof}
	Let
	\[   p^\# =  \sup \left \{ p_t(\mathbf z, \mathbf y;
	V_{r+2}): \mathbf z\in \partial V_{r+1}, \mathbf y \in V_r, 0\leq t\leq 1  \right \}  .\]
	By comparison with Brownian motion using \eqref{covid.3}
	we can see that $p^\# < \infty$.  Also by comparison with
	Brownian motion, we can see that there exists $\rho > 0$
	such that  for all  $\x \in \p V_{r+2}$,
	\[    \Prob^\x\{\btheta[0,1] \cap V_{r+1} = \eset \} \geq 
	  \rho. \]
From this and the strong Markov property, we see that
	\[    p^* \leq  p^\# + (1-\rho) \, p^*.\]
	(The term $p^\#$ on the righthand side corresponds to paths from $\x$ to $\y$ that stay within $V_{r+2}$. The term $(1-\rho) \, p^*$ corresponds to paths from $\x$ to $\y$ that hit $\bdry V_{r+2}$.)
	Therefore,
	\[
	p^*\leq \frac{p^\#}{\rho}<\infty.
	\]

	\end{proof}
	
	\begin{lemma}  There exist $c, \beta< \infty$ such that 
	if $t \geq 1$,  
$k > r$,  $\x \in \mathcal X, \y \in \overline V_{k+1} \setminus V_k$,
then
\begin{equation}  \label{covid.5}
 {  p_{2^{-2k}t} (\x,\y; V_k^c)}
	   \leq c \, 2^{ \beta k} \, (1-\delta)^t \, F_{2\alpha}
	(\y),
	\end{equation}
	where $\delta >0$ is the constant in Lemma \ref{double_distance}.
	\end{lemma}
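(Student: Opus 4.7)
Set $\epsilon=2^{-k}$, so that $s=t\epsilon^2$ with $t\ge 1$. The plan is to apply the Markov property at time $s-\epsilon^2$ so that the density factors into a survival probability (which will supply the $(1-\delta)^t$ decay) and a short-time density (which will supply the factor $C\cdot 2^{\beta k}F_{2\alpha}(\y)$). Since $p_s(\x,\y;V_k^c)=0$ unless $\x\in V_k^c$, we may assume $\x\in V_k^c$; then
\[
p_s(\x,\y;V_k^c)\le \Bigl[\sup_{\z\in V_k^c}p_{\epsilon^2}(\z,\y;V_k^c)\Bigr]\cdot\Prob^\x\{\tau>(t-1)\epsilon^2\},
\]
where $\tau$ is the first exit from $V_k^c$. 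Iterating Lemma~\ref{double_distance} with parameter $\epsilon=2^{-k}$ at the grid times $j\epsilon^2$ via the strong Markov property shows that at each step the process reaches $d\ge 2\epsilon$ (hence leaves $V_k^c$) with probability at least $\delta$, so $\Prob^\x\{\tau>(t-1)\epsilon^2\}\le(1-\delta)^{\lfloor t-1\rfloor}\le (1-\delta)^{-1}(1-\delta)^t$.

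For the short-time supremum, we would first use the reversibility of the Bessel semigroup with respect to $f_{2\alpha}$ (established in the proof of Proposition~\ref{prop:invdensity}), which carries over to the killed semigroup on $V_k^c$:
\[
p_{\epsilon^2}(\z,\y;V_k^c)=\frac{F_{2\alpha}(\y)}{F_{2\alpha}(\z)}\,p_{\epsilon^2}(\y,\z;V_k^c).
\]
This reduces the task to a uniform bound $p_{\epsilon^2}(\y,\z;V_k^c)\le C\cdot 2^{\beta k}F_{2\alpha}(\z)$ over $\z\in V_k^c$. We would drop the killing, apply Girsanov (Proposition~\ref{Mart_a}) from the Bessel process to non-colliding Brownian motion in $\mathcal X$, and use the Karlin--McGregor determinantal estimate $\tilde p_{\epsilon^2}(\y,\z;\mathcal X)\le C\epsilon^{-n^2}F(\y)F(\z)$ together with the lower bounds $F(\y),F(\z)\gtrsim 2^{-kn(n-1)/2}$ forced by $\y\in\overline V_{k+1}\setminus V_k$ and $\z\in V_k^c$ to produce the bound with an explicit $\beta=\beta(\alpha,n)$.

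The main obstacle is controlling the Feynman--Kac correction $\E^{\mathrm{br}}[\exp\{\tfrac12(\alpha-\alpha^2)\int_0^{\epsilon^2}\psi\}]$ that appears in the Girsanov identity: this factor is bounded by $1$ for $\alpha\ge 1$, but for $\alpha\in[1/2,1)$ the exponent is positive while $\psi$ blows up at $\partial\mathcal X$, so naive estimates diverge. The cleanest way around this is to recognize, via the identity $\Delta F_\alpha/F_\alpha=-\alpha^2 n(n^2-1)/3+(\alpha^2-\alpha)\psi$ already used to produce the martingale $M_{t,\alpha}$, that $F_\alpha$ is the Dirichlet ground state of the Calogero--Sutherland operator $-\tfrac12\Delta+\tfrac12(\alpha^2-\alpha)\psi$ on $\mathcal X$; the Bessel semigroup is then the ground-state Doob transform of this operator, and intrinsic ultracontractivity supplies the needed short-time Feynman--Kac kernel bound, which after the Doob transform yields the required uniform estimate with $\beta=n$.
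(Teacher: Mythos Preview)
Your overall decomposition---split at time $(t-1)\epsilon^2$, use Lemma~\ref{double_distance} for the survival probability, then control the remaining time-$\epsilon^2$ kernel---is exactly what the paper does. The divergence is entirely in how you handle the short-time factor $\sup_{\z}p_{\epsilon^2}(\z,\y)$, and there you make life considerably harder than necessary.

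The paper does \emph{not} pass through reversibility, Karlin--McGregor, or intrinsic ultracontractivity. It simply observes that since $\y\in\overline{V_{k+1}}$, one has $F_{2\alpha}(\y)\ge c\,2^{-\lambda k}$ for an explicit $\lambda$, so the target bound $p_{\epsilon^2}(\z,\y)\le c\,2^{\beta k}F_{2\alpha}(\y)$ is equivalent to a crude polynomial bound $p_{\epsilon^2}(\z,\y)\le c\,2^{(\beta-\lambda)k}$. To get this, the paper first works on the localized domain $V_{k+3}$: there $\psi\le C\,2^{2k}$, so over the window $[0,\epsilon^2]=[0,2^{-2k}]$ the Feynman--Kac integral $\int_0^{\epsilon^2}\psi$ is $O(1)$ regardless of the sign of $\alpha-\alpha^2$. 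Thus the Radon--Nikodym derivative $M_{\epsilon^2,\alpha}/M_{0,\alpha}$ is bounded by a power of $2^k$ on paths in $V_{k+3}$, and comparison~\eqref{covid.3} with the free heat kernel (bounded by $c\,2^{nk}$) gives $p_{\epsilon^2}(\z,\y;V_{k+3})\le c\,2^{\beta k}F_{2\alpha}(\y)$. The extension from $p_{\epsilon^2}(\cdot,\cdot;V_{k+3})$ to the unrestricted density is then handled by the same excursion-counting trick as in Lemma~\ref{upperboundlemma}.

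Your route via reversibility actually transforms the problem into a harder one: after writing $p_{\epsilon^2}(\z,\y)=\frac{F_{2\alpha}(\y)}{F_{2\alpha}(\z)}p_{\epsilon^2}(\y,\z)$ you now need the ratio $p_{\epsilon^2}(\y,\z)/F_{2\alpha}(\z)$ to be bounded uniformly as $\z\to\partial\mathcal X$, which is a genuinely boundary-sensitive estimate. That is what forces you toward intrinsic ultracontractivity. The paper avoids this entirely because $\y$, the terminal point, is already $2^{-(k+1)}$ away from $\partial\mathcal X$; no boundary behaviour in $\z$ is required. Your invocation of IU for the Calogero--Sutherland semigroup is plausible but is neither proved in the paper nor elementary, and it is unnecessary once you localize as above.
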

	
	\begin{proof}  We first consider $t=1$. 
For independent Brownian motions, we have
	\[    \tilde p_{2^{-2k}} (\x,\y) \leq c \, 2^{nk}, \;\;\;  \x \in V_{k+1}, \]
	\[            \tilde p_{t} (\x,\y) \leq c \, 2^{nk}, \;\;\; 0 \leq t \leq
	 2^{-k} , \, \;\;\; \x \in \p V_{k+2}. \]
There exist $\lambda,c'$ such that $F_{2\alpha}(\z)
\geq c'\, 2^{-\lambda k}$  for  $\z \in V_{k+2}$,  Using the first
inequality  and
\eqref{covid.3}  we have  
	\[    p_{2^{-2k}} (\x,\y; V_{k+3}) \leq c \, 2^{(n+\lambda) k} 
 \leq c \, 2^{\beta n} \, F_{2\alpha}(\y)  \]
  \[   p_{t} (\z,\y; V_{k+3} ) \leq c \, 2^{\beta k }
   \, F_{2\alpha}(\y) 
	\;\;\;\;\; t \leq 2^{-2k}, \;\;\; \z \in \p V_{k+2} , \]
 where $\beta = n + 2\lambda$. 
Arguing as in the previous
lemma, we can conclude that
\[      p_{2^{-2k}} (\x,\y )  
 \leq c \, 2^{\beta n} \, F_{2\alpha}(\y)  .\]
This gives the first inequality and the second follows from
Lemma  \ref{double_distance} using
\[     p_{2^{-2k}t} (\x,\y; V_k^c)
	   \leq  \Prob^\x\{\btheta[0,2^{-2k}(t-1)] \subset V_k^c\}
	    \, \sup_{\z \in \mathcal X}  p_{2^{-2k}}(\z,\y).\]
\end{proof}

	\begin{proof}[Proof of Proposition \ref{bounds}, upper bound]
%
%
	
	For each $k\in \mathbb N$, define
	\[
	J_k=\sup_{\shortstack{$\scriptstyle{\by \in V_k}$ \\ $\scriptstyle{ \forall \mathbf x}$ \\$\scriptstyle{t_k\leq t\leq 1} $
	}} 
	\frac{p_t(\mathbf x, \mathbf y)}{F_{2\alpha}(\mathbf y)},
	\]
	where $\tilde t_k= t_k + 1/2$, so that
	\[
	\tilde t_{k}=\begin{cases}
	\frac{3}{4}, & k\leq l
	\\ \tilde t_{k-1}+s_{k-1}, & k> l,
	\end{cases}
	\]
	where $s_k$ and $t_k$ are defined in (\ref{def:s_k}) and (\ref{def:t_k}) above.
Since $F_{2\alpha}(\mathbf y) \geq d(\mathbf y)^{n(n-1)/2}$, Lemma \ref{upperboundlemma} implies that for each $k$, $J_k<\infty$.
	We will show that $J_{k+1}\leq J_k + c_k$ for a summable sequence $\{c_k \}$, which implies that $\lim_{k\to \infty} J_k <\infty$.
	
	To bound $J_{k+1}$, notice that if $\mathbf y \in V_{k+1}$ and $t_{k+1}\leq t \leq 1$, we have the decomposition for arbitrary $\mathbf x$:
	\begin{equation}\label{eq:upperdecomp}
	 \frac{p_t(\mathbf x, \mathbf y)}{F_{2\alpha} (\mathbf y)} 
	= \frac{p_t(\mathbf y, \mathbf x)}{F_{2\alpha} (\mathbf x)} 
	=  \frac{p_t(\mathbf y, \mathbf x; \sigma_k \leq s_k)}{F_{2\alpha} (\mathbf x)} 
	+ \frac{p_t(\mathbf y, \mathbf x; \sigma_k > s_k)}{F_{2\alpha} (\mathbf x)} .
	\end{equation}
	The strong Markov property implies that the first term on the righthand side is equal to
	\[
	\int_{\partial V_k}p_{\sigma_k} (\mathbf y, \mathbf z;\sigma_k\leq s_k) \frac{p_{t-\sigma_k} (\mathbf z, \mathbf x)}{F_{2\alpha}(\mathbf x)} \, d\mathbf z
	\, \,\, \leq \,\mathbb P^{\mathbf y} \{ \sigma_k\leq s_k \} \sup_{\shortstack{ $\scriptstyle{ \mathbf z \in \partial V_k }$ \\ $ \scriptstyle{\tilde t_{k} \leq s \leq 1}$  
	} } \frac{p_s(\mathbf x', \mathbf z)}{F_{2\alpha} (\mathbf z)}
	\,\,\leq \,\,J_k.
	\]
Using \eqref{covid.5}. we can see  that the second term on the righthand side of (\ref{eq:upperdecomp}) may be bounded by
	\[
	\begin{aligned}
	 \frac{p_t(\mathbf y, \mathbf x; \sigma_k >s_k)}{F_{2\alpha} (\mathbf x)}
	 & =  \int_{\mathcal X_n} p_{s_k} (\mathbf y, \mathbf z; \sigma_k >s_k) \; \frac{p_{t-s_k} (\mathbf z, \mathbf x)}{F_{2\alpha}(\mathbf x)}\,d\mathbf z \\
	 & =  \int_{V_k^c} p_{s_k} (\mathbf y, \mathbf z; V_k^c)\; \frac{p_{t-s_k} (\mathbf x, \mathbf z)}{F_{2\alpha}(\mathbf z)}\,d\mathbf z
	\\ &  = \frac{1}{F_{2\alpha}(\y)}
	 \int_{V_k^c} p_{s_k} (\mathbf z, \mathbf y; V_k^c)\;  {p_{t-s_k} (\mathbf x, \mathbf z)} \,d\mathbf z
	\\ 
	& \leq    e^{-O(k^2)}  \int_{V_k^c}   {p_{t-s_k} (\mathbf x, \mathbf z)} \,d\mathbf z \leq  e^{-O(k^2)} .
	\end{aligned}
	\]
Therefore,
	\[
	J_{k+1} \leq J_k + e^{-O(k^2)},
	\]
	completing the proof.

\end{proof}
\bibliography{bibfile}
\bibliographystyle{alpha}

 \end{document}